\documentclass[11pt,twoside]{article}

\usepackage{xcolor}
\usepackage{amsmath}
\usepackage{amsfonts}
\usepackage{amsthm}
\usepackage{mathrsfs}
\usepackage{booktabs}
\usepackage{algorithm}
\usepackage{algpseudocode}
\usepackage{indentfirst}
\usepackage{bm}
\usepackage{bbm}
\usepackage{float}
\usepackage{mathtools}
\usepackage[normalem]{ulem}

\mathtoolsset{showonlyrefs}

\usepackage[margin=1in]{geometry}

\usepackage[round,authoryear]{natbib}

\usepackage[
    colorlinks=true,
    linkcolor=blue,
    citecolor=blue,
    urlcolor=blue
]{hyperref}

\newcommand{\field}[1]{\mathbb{#1}}
\newcommand{\R}{\field{R}}

\newcommand{\prob}{\field{P}}
\newcommand{\ind}{\mathbbm{1}}

\newcommand{\Kbar}{\overline{\mathbb{K}}}

\newcommand{\eps}{\varepsilon}
\newcommand{\Catom}[1]{C_{(#1})}

\newtheorem{problem}{Problem}
\newtheorem{assumption}{Assumption}
\newtheorem{theorem}{Theorem}
\newtheorem{lemma}{Lemma}
\newtheorem{definition}{Definition}
\newtheorem{remark}{Remark}
\newtheorem{proposition}{Proposition}
\newtheorem{corollary}{Corollary}

\begin{document}
\title{Retrieving predictive densities from conformal predictive distributions}

\author{
Dan Andrei Tudor$^{1}$, 
Dorota Kurowicka$^{2}$, 
Balint Negyesi$^{3}$, 
Valerii Zoller$^{3}$ \\[0.5em]
\small $^{1}$Department of Statistics, University of Wisconsin--Madison. \\
\small $^{2}$Delft Institute of Applied Mathematics, Delft University of Technology. \\
\small $^{3}$R\&D Labs, Ortec Finance, Rotterdam.
}

\date{}

\maketitle

\begin{abstract}
Conformal prediction intervals are used to construct marginally well-calibrated regions around point predictions. Recent work has focused on extending these ideas to conformal predictive distributions, whose marginal probability integral transform (PIT) follows a $\mathrm{Unif}[0,1]$ distribution. In this work, we investigate how to recover predictive densities from conformal predictive distributions. We extend the theory of conformal predictive distributions by proving asymptotic marginal validity for a tail-corrected version of conformal predictive distributions. We propose a method called \emph{quantile matching}, which preserves an upper bound on the deviation of the marginal PIT from uniformity. Furthermore, we show that the distribution induced by quantile matching is equivalent to the crisp version of conformal predictive distributions when the number of quantiles equals the size of the calibration set. For the recovery of conformal densities, we construct a fidelity-constrained bandwidth optimization for kernel smoothing that preserves asymptotic marginal validity and has a closed-form solution.
We apply and compare our proposed methodology on a large simulated real estate transactions dataset based on the Hierarchical Trend Model.

\end{abstract}


\section{Introduction}
Uncertainty quantification is a growing topic in statistics, and an important subfield is probability forecasting, i.e., instead of a single point prediction, constructing probability distributions over future values of a target variable conditional on the arrival of a number of features \citep{katzfuss}. Such a probabilistic forecast should be \emph{well-calibrated} and sharp, with calibration often being diagnosed through the probability integral transform (PIT), and its deviation from the uniform distribution -- see Lemma~\ref{randomizeduniform} in what follows.

\par The conformal framework of \cite{vovk} has been a turning point in model-agnostic, distribution-free probabilistic forecasting. Under the assumption of exchangeability alone, it covers any point predictor with a prediction set while maintaining a \emph{marginal coverage guarantee}. Its split version \citep{lei2017} reduces the computational cost to fitting a single point predictor and is the version used throughout this paper. For a recent survey on conformal prediction, we refer to \cite{angelopoulos_conformal_2023}. Much subsequent work has been concerned with making the resulting sets \emph{adaptive}, since the plain absolute-residual score yields intervals of fixed width. We mention conformal quantile regression \citep{CQR}, conformal direct prediction \citep{francke}, conditional histograms \citep{sesia_conformal_2021}, and density level sets \citep{lei_distribution-free_2013, izbicki_cd-split_2022}.  Other related conformal constructions that calibrate quantile- or distribution-based objects, but whose output is a prediction set rather than a predictive distribution, have been proposed by \citep{izbicki_flexible_2020}, \citep{gupta_nested_2022} and \citep{chernozhukov_distributional_2021}. These are the closest existing constructions
to the method proposed here, and we compare with them in
Section~\ref{sec:qm}. \color{black} All aforementioned works reshape the conformity score resulting in varying prediction set sizes.  Other strands improve sample efficiency \citep{barber_predictive_2021} or relax exchangeability \citep{barber_conformal_2023}. In all aforementioned works, and also in our paper, validity is meant marginally, as distribution-free \emph{conditional validity} on input features is known to be impossible without further structural assumptions, see \cite{foygel_barber_limits_2021}. 

A prediction interval reports where the target is likely to fall, but says nothing about the relative plausibility of the points inside it. Conformal predictive distributions (CPDs) close this gap \citep{lspm, split-cpd} as instead of fixing a single level $\alpha$, the conformal machinery is run at all levels simultaneously, and the output is a monotone, right-continuous object $Q_n$ whose probability integral transform is $\mathrm{Unif}[0,1]$ under exchangeability, thus modelling the \emph{cumulative distribution function} of the underlying uncertainty. The construction has since been extended to cross-conformal \citep{vovk_cross-conformal_2018}, kernelised \citep{vovk_conformal_2018} and Mondrian \citep{bostrom_mondrian_2021} variants, and used directly for decision making \citep{vovk_conformal_2018-1} and Bayesian computation \citep{fong_conformal_2021}.
A conformal predictive distribution has three critical properties in the context of our work. First, it is a piecewise constant step function supported on conformal atoms computed over a calibration set. Second, it uses a randomization parameter $\tau\sim \textnormal{Unif}[0, 1]$ that breaks ties in the calibration scores. Finally, its associated probability measure assigns non-zero mass to $\pm\infty$, meaning that CPDs are \emph{not} true distribution functions. These characteristics turn out to be a major obstacle when one tries to obtain an associated predictive density.

In high-risk settings such as finance, decisions do depend on the shape of uncertainty and with such full predictive distributions extreme-event probabilities and tail risk can be assessed. Works on conformal predictive distributions derive a cumulative distribution function of the predictive uncertainty that is marginally well-calibrated. However, in many applications a predictive \emph{density} may be desired, especially by practitioners. In fact, a density allows more direct qualitative insight into the shape of the distribution such as modality, skewness or tail behavior. Additionally, computing non-linear expectations of functionals of the target variable, logarithmic or entropy based scoring rules all require a density function. Most importantly, uncertainty in the target variable can give important insight in the presence of uncertain inputs. In the context of real-estate valuation, certain house characteristics determining the price might either be uncertain or outdated, such as energy label or state of maintenance. Denoting an uncertain feature by $\theta$ and all other input characteristics by $x$, a predictive density of the price conditional on all features $q(y\vert \theta, x)$ can be used in a Bayesian update $p(\theta\vert y, x)\propto q(y\vert \theta, x)p(\theta\vert x)$ to infer the distribution of the missing characteristics given a prior $p(\theta\vert x)$. Such probabilistic inference over latent or missing covariates necessitates predictive distributions.\color{black}

\par Motivated by the discussion above, in this article, we aim to extend conformal predictive distributions to predictive densities.  Naive finite differencing of a conformal predictive distribution results in a sum of Dirac masses, that may obscure certain features of the underlying density. Recovering a density from a CPD is therefore a true estimation problem. To counteract this issue, two main approaches are discussed. First, we propose a new method, \textit{quantile-matching}, in which we preselect a sequence of quantile levels at which we use the conformal algorithm to compute conformal quantiles, i.e. one-sided conformal prediction intervals. This allows the user to choose exactly the resolution they are interested in, which can be beneficial when practitioners are more interested in, say, more extreme quantiles. We show that quantile matching preserves a sharp theoretical upper bound on the PIT perturbation from the uniform distribution. Furthermore, we draw a connection between the quantile-matching induced distribution and the crisp modification \citep[Section~5]{split-cpd} of conformal predictive distributions, thereby obtain a novel marginal validity guarantee of the latter. Second, we use kernel smoothing on conformal distributions so obtained, derive and analyze a \emph{fidelity-constrained} bandwidth optimization problem that respects asymptotic marginal validity in the PIT distance. We prove that this optimization problem is well-posed, and has a closed-form solution in case of the Epanechnikov kernel.
We compare the models' performance on a large simulated dataset of real estate prices based on the Hierarchical Trend Model \citep{francke}, confirm the theoretical findings and demonstrate the advantages of (optimal kernel smoothed) quantile matching in terms of runtime and several scoring metrics, in particular, tail approximation errors.

\par Our main contributions are as follows.
\begin{itemize}
    \item The \emph{quantile-matching} method for recovering predictive densities from conformal predictive distributions, offering the user direct control over the resolution of the density estimate is proposed.
    \item A finite-sample bound on the PIT deviation for quantile-matched predictive distributions is proven (Theorem~\ref{qm-theorem}).
    \item A precise equivalence between quantile matching at full resolution and the crisp modification of conformal predictive distributions is established (Corollary~\ref{crispthm}).
    \item A finite-sample bound is proven for both tail-corrected conformal predictive distributions resulting in true distribution functions, and for their continuous approximations (Theorems~\ref{distance-tailcorrected}, \ref{thm:approximation_pit_guarantee}).
    \item A fidelity-constrained kernel smoothing optimization problem that selects the largest bandwidth while staying $\varepsilon$-close to the target conformal object. We show that this optimization problem is well-posed, preserves asymptotic validity and has a closed-form solution with the Epanechnikov kernel.
    \item We demonstrate empirically on a large simulated real estate dataset, for which the true conditional law is known, that the method robustly reduces density 'fuzziness' and is especially well-suited for evaluating tail statistics.
\end{itemize}

\par The paper is organized as follows. Section~2 provides an overview of the defining properties of empirical quantiles and distribution functions. Section~3 presents the construction of (split) conformal prediction intervals that serve as a basis to quantile matching. Section~4 introduces split conformal predictive systems. In Section~5 we discuss a measure correction that places the underlying conformal measure on the real line, which consequently induces a true distribution function. We prove asymptotic marginal validity of this correction and its continuous approximations. Section~6 introduces quantile matching and establishes its theoretical guarantees and its relation to the crisp modification of conformal predictive distributions. Section~7 develops the fidelity-constrained kernel-smoothing procedure. Section~8 presents the empirical comparison on simulated data.
\color{black}
\section{Empirical quantiles and distribution functions}

Let us first define empirical quantiles and give two relevant properties, variants of which can be consulted in \cite{CQR}. 
Denote the cumulative distribution function of a real-valued random variable $Z$ as $F(z):=~\prob(Z \leq z)$, and the associated \textit{true quantile function} at level $\alpha \in (0,1)$ as $q_\alpha:=\inf\{z \in \R: F(z) \geq ~\alpha\}$.
Similarly, the \textit{right-quantile function} is defined as $rq_\alpha:=\sup\{z \in \R: F^-(z)\leq \alpha\}$, where $F^-(z)=\prob(Z<z)$. The empirical cumulative distribution function and its associated empirical quantiles can be defined as follows.

\begin{definition}[Empirical quantiles]
\label{ecdf}
    In the case of identically distributed random variables $Z_1, \dots, Z_n$, we additionally define the \textit{empirical cumulative distribution function} as $\hat{F}_n(z):=\frac{1}{n}\sum_{i=1}^n \ind_{\{Z_i \leq z\}}$. We then define the \textit{empirical quantile function} $\hat{q}_{\alpha,n}$ as the true quantile function with respect to the empirical CDF.
\end{definition}

\begin{definition}[Right empirical quantiles]
    The right empirical quantile $\widehat{rq}_{\alpha,n}$  is the true right quantile function with respect to $\hat{F}^-_n(z):=\frac{1}{n}
\sum_{i=1}^n \ind_{Z_i<z}$.
\end{definition}

  The empirical and right-empirical quantile functions can be written down explicitly \citep{CQR} with respect to  the order statistics $Z_{(1)}, \dots, Z_{(n)}$ as 
\begin{equation}
    \label{empiricalquantile}
    \hat{q}_{\alpha,n}=Z_{(\lceil n\alpha\rceil)}, \quad \widehat{rq}_{\alpha,n} = Z_{(\lfloor n \alpha \rfloor+1)}.
\end{equation}

We now briefly summarize two lemmas concerning exchangeable random variables and their respective quantiles. Their proofs can be consulted in the supplementary material of \cite{CQR}. These two lemmas provide a bridge between the properties of quantiles and a theoretical marginal coverage guarantee typical of conformal procedures.
\begin{lemma}
\label{lemma1quantile}
    Let $Z_1, \dots, Z_n$ be exchangeable random variables. Then \[ \prob(Z_n \leq \hat{q}_{\alpha,n}) \geq \alpha. 
    \] If, in addition, $Z_1, \dots, Z_n$ are almost surely distinct, then \[
    \prob(Z_n \leq \hat{q}_{\alpha,n}) \leq \alpha + \frac{1}{n}.   
    \] 
\end{lemma}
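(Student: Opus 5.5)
The plan is to reduce the event $\{Z_n \le \hat{q}_{\alpha,n}\}$ to a statement about the rank of $Z_n$ among $Z_1,\dots,Z_n$, and then use exchangeability to control the distribution of that rank. By \eqref{empiricalquantile}, $\hat{q}_{\alpha,n}=Z_{(\lceil n\alpha\rceil)}$, and this quantity is a symmetric function of $(Z_1,\dots,Z_n)$. Exchangeability therefore gives, for every $i$,
\[
\prob(Z_i \le \hat{q}_{\alpha,n}) = \prob(Z_n \le \hat{q}_{\alpha,n}),
\]
because permuting the coordinates leaves $\hat{q}_{\alpha,n}$ unchanged and moves $Z_n$ to $Z_i$. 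Averaging over $i$ then yields
\[
\prob(Z_n \le \hat{q}_{\alpha,n}) = \frac{1}{n}\,\mathbb{E}\Big[\sum_{i=1}^n \ind_{\{Z_i \le \hat{q}_{\alpha,n}\}}\Big] = \mathbb{E}\big[\hat F_n(\hat{q}_{\alpha,n})\big].
\]
This identity drives both bounds: what remains is a deterministic count of how many sample points lie at or below the $\lceil n\alpha\rceil$-th order statistic.

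\textbf{Lower bound.} Pointwise, at least $\lceil n\alpha\rceil$ indices satisfy $Z_i \le Z_{(\lceil n\alpha\rceil)}$, namely those holding the first $\lceil n\alpha\rceil$ order statistics. Ties can only increase this count. Hence $\hat F_n(\hat q_{\alpha,n}) \ge \lceil n\alpha\rceil/n \ge \alpha$, and taking expectations gives the first claim. Equivalently, this follows from the defining property $\hat F_n(\hat q_{\alpha,n})\ge\alpha$ of the quantile as an infimum of a right-continuous CDF.

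\textbf{Upper bound.} If the $Z_i$ are almost surely distinct, then almost surely exactly $\lceil n\alpha\rceil$ indices satisfy $Z_i \le Z_{(\lceil n\alpha\rceil)}$. So $\hat F_n(\hat q_{\alpha,n}) = \lceil n\alpha\rceil/n < \alpha + 1/n$, which gives the second claim after taking expectations.

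The main obstacle is the edge case $\alpha$ close to $1$: when $\lceil n\alpha\rceil$ would exceed $n$, the order statistic must be read with the convention $Z_{(k)}=+\infty$ for $k>n$. For $\alpha\in(0,1)$ we always have $\lceil n\alpha\rceil\le n$, so this cannot occur here, but I would state the convention explicitly for later use in the conformal setting. The other point needing care is the symmetry step. I would make precise that exchangeability means $(Z_{\pi(1)},\dots,Z_{\pi(n)})\overset{d}{=}(Z_1,\dots,Z_n)$ for every permutation $\pi$. Applying this with the transposition of $i$ and $n$ to the measurable function $(z_1,\dots,z_n)\mapsto \ind_{\{z_n\le z_{(\lceil n\alpha\rceil)}\}}$, which is symmetric in its first $n-1$ arguments through the order statistic, justifies the equality of the $n$ probabilities.
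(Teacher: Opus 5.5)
Your proof is correct. The paper gives no argument of its own and defers to the supplementary material of \cite{CQR}, where the result rests on the same exchangeability/rank-symmetry reasoning you use. Your averaging identity $\prob(Z_n\le\hat q_{\alpha,n})=\mathbb{E}[\hat F_n(\hat q_{\alpha,n})]$ plays the role of the ``rank of $Z_n$ is uniform'' step while handling ties without tie-breaking. The count $\hat F_n(Z_{(\lceil n\alpha\rceil)})\ge\lceil n\alpha\rceil/n$, with equality under almost-sure distinctness, then gives both bounds.
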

The next lemma \citep{CQR} concerns a similar result when another random variable is added to the set. Note that the statement still refers to the empirical quantile of the first $n$ observations. This is crucial for proving the theoretical coverage guarantee of the split conformal prediction method in the upcoming section. 
\begin{lemma}
\label{lemma2quantiles}
     Let $Z_1, \dots, Z_{n+1}$ be exchangeable random variables. Then \[ \prob(Z_{n+1} \leq \hat{q}_{\alpha(1+\frac{1}{n}),n}) \geq \alpha. 
    \] If, in addition, $Z_1, \dots, Z_{n+1}$ are almost surely distinct, then \[
    \prob(Z_{n+1} \leq \hat{q}_{\alpha(1+\frac{1}{n}),n}) \leq \alpha + \frac{1}{n+1}.   
    \] 
\end{lemma}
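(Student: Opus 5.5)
Write $\beta := \alpha(1+\frac1n)$. The plan is to reduce the statement to Lemma~\ref{lemma1quantile} applied to the $n+1$ exchangeable variables $Z_1,\dots,Z_{n+1}$, using the explicit order-statistic formula \eqref{empiricalquantile}.

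\textbf{Reduction to the full sample.} By \eqref{empiricalquantile}, $\hat q_{\beta,n}=Z_{(\lceil n\beta\rceil)}$ among $Z_1,\dots,Z_n$, and $\lceil n\beta\rceil=\lceil (n+1)\alpha\rceil=:k$. If $k>n$, we interpret the quantile as $+\infty$, so the lower bound is trivial. Otherwise $k\le n$, and we establish the event identity
\begin{equation}
\{Z_{n+1}\le Z_{(k)}^{(1:n)}\}=\{Z_{n+1}\le Z_{(k)}^{(1:n+1)}\}.
\end{equation}
For ``$\subseteq$'': if $Z_{n+1}\le Z_{(k)}^{(1:n)}$, then at least $k$ of the $n+1$ values are $\le Z_{(k)}^{(1:n)}$. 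Moreover, adding $Z_{n+1}$ below that level shifts the $k$-th order statistic of the full sample to $\max(Z_{n+1},Z_{(k-1)}^{(1:n)})\ge Z_{n+1}$. For ``$\supseteq$'': if $Z_{n+1}>Z_{(k)}^{(1:n)}$, then $Z_{(k)}^{(1:n+1)}=Z_{(k)}^{(1:n)}<Z_{n+1}$. Since $k=\lceil (n+1)\alpha\rceil$, the right-hand event is $\{Z_{n+1}\le \hat q_{\alpha,n+1}\}$.

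\textbf{Conclusion.} Lemma~\ref{lemma1quantile}, applied to the $n+1$ exchangeable variables, gives probability $\ge\alpha$. Under almost-sure distinctness it gives probability $\le \alpha+\frac1{n+1}$. Both claims follow.

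\textbf{Main obstacle.} The delicate step is the event identity with ties, specifically the ``$\subseteq$'' direction. I would handle it by counting: $Z_{n+1}\le Z_{(k)}^{(1:n+1)}$ holds iff fewer than $k$ of the full-sample values lie strictly below $Z_{n+1}$. Since $Z_{n+1}$ itself is not strictly below itself, this is equivalent to fewer than $k$ of $Z_1,\dots,Z_n$ lying strictly below $Z_{n+1}$, which holds iff $Z_{n+1}\le Z_{(k)}^{(1:n)}$. This counting characterization proves the identity cleanly without case analysis.
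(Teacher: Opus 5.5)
Your proof is correct and follows the same route as the proof the paper cites from the CQR supplement (the paper gives none itself): since $\lceil n\alpha(1+\tfrac1n)\rceil=\lceil (n+1)\alpha\rceil$, the event $\{Z_{n+1}\le \hat q_{\alpha(1+1/n),n}\}$ equals $\{Z_{n+1}\le \hat q_{\alpha,n+1}\}$, and Lemma~\ref{lemma1quantile} applied to all $n+1$ variables gives both bounds. One small omission: in the case $k>n$ you should also note that the upper bound holds trivially, because then $\alpha>\tfrac{n}{n+1}$ and hence $\alpha+\tfrac{1}{n+1}>1$.
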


Similarly, for the case of conformal predictive distributions, the following lemmas \cite[Section~2]{vovk} aid in the understanding of the theoretical marginal validity guarantee of these systems. Their proofs can be consulted in \cite[Section~6]{vovk}. The lemmas essentially highlight that being monotonically increasing and having a uniform distribution in probability integral transform are defining properties of the distribution function of real-valued random variables. Let us first assume that the distribution is everywhere continuous.

\begin{lemma}
    Let $F$ be a continuous distribution function on $\mathbb{R}$, $Y$ a random variable with distribution $F$, and $Q:\mathbb{R} \rightarrow \mathbb{R}$ a non-decreasing function. If $Q(Y) \sim \mathrm{Unif}([0,1])$, then $Q=F$.
    \label{definingproperty}
\end{lemma}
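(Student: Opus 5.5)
Since $F$ is continuous, the probability integral transform gives $F(Y)\sim\mathrm{Unif}([0,1])$. The goal is therefore to show that two non-decreasing functions $Q$ and $F$, both sending $Y$ to a uniform variable, must agree everywhere. I will argue by contradiction, supposing $Q(y_0)\neq F(y_0)$ for some $y_0\in\R$, and I will use only monotonicity of $Q$ and continuity of $F$.

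\textbf{Case $Q(y_0)>F(y_0)$.} Put $a:=Q(y_0)$; we may assume $a\le 1$, since $Q(Y)\in[0,1]$ almost surely (treated below). Monotonicity of $Q$ gives the inclusion $\{Y\ge y_0\}\subseteq\{Q(Y)\ge a\}$. Uniformity of $Q(Y)$ then yields
\[
\prob(Q(Y)\ge a)=1-a<1-F(y_0)=\prob(Y> y_0)=\prob(Y\ge y_0),
\]
where the last equality uses $\prob(Y=y_0)=0$ by continuity of $F$. This contradicts the inclusion.

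\textbf{Case $Q(y_0)<F(y_0)$.} Put $b:=Q(y_0)$. Now $\{Y\le y_0\}\subseteq\{Q(Y)\le b\}$, so
\[
F(y_0)=\prob(Y\le y_0)\le \prob(Q(Y)\le b)=b\vee 0 ,
\]
where $b\vee 0$ denotes $\max(b,0)$, since $Q(Y)$ is uniform on $[0,1]$. Because $F(y_0)>b$, we must have $b<0$ and $F(y_0)\le 0$, i.e.\ $F(y_0)=0$. But then $F(y_0)=0\le b$ unless $b<0$, which means $Q(y_0)<0$. Symmetrically, the first case with $a>1$ forces $F(y_0)=1$.

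\textbf{Main obstacle: the boundary of the support.} The genuine difficulty is the set of points where $F\in\{0,1\}$, i.e.\ outside the support of $Y$. There $Q$ is not pinned down by the distribution of $Q(Y)$ alone: an arbitrary non-decreasing $Q$ could take values below $0$ to the left of the support without changing the law of $Q(Y)$. I will handle this in one of two ways:
\begin{enumerate}
\item interpret the equality $Q=F$ as holding on the support of $Y$ (equivalently, $F$-almost everywhere), in the sense used in \cite{vovk}; or
\item use that $Q$ takes values in $[0,1]$, since the codomain in the intended application is a predictive distribution. With $Q(y_0)\in[0,1]$, both contradictions above close immediately: in the second case $b\ge 0$ gives $F(y_0)\le b$, contradicting $F(y_0)>b$.
\end{enumerate}
I would state explicitly which reading is used. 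The interior argument is routine; the only care needed is this boundary issue and the use of continuity to replace strict by weak inequalities.
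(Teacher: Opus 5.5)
The paper does not prove this lemma itself; it only refers to Vovk et al., so there is no in-paper argument to compare with. Your direct contradiction argument is correct and self-contained. It uses only monotonicity of $Q$, the inclusions $\{Y\ge y_0\}\subseteq\{Q(Y)\ge Q(y_0)\}$ and $\{Y\le y_0\}\subseteq\{Q(Y)\le Q(y_0)\}$, and the absence of atoms in $F$ and in the uniform law.

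You are also right that the lemma as printed is false. With $Q:\R\to\R$ and the conclusion $Q=F$ everywhere, take $Y\sim\mathrm{Unif}[0,1]$ and $Q(y)=y$: then $Q(Y)=Y$ is uniform, but $Q(-1)=-1\neq 0=F(-1)$. Your case analysis gives the sharp statement. If $Q(y_0)\neq F(y_0)$, then either $F(y_0)=0$ and $Q(y_0)<0$, or $F(y_0)=1$ and $Q(y_0)>1$. So requiring $Q$ to take values in $[0,1]$, as every predictive system in the paper does, yields $Q=F$ everywhere. That is the reading you should adopt and state explicitly.

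Your first alternative needs one correction. ``On the support of $Y$'' is not equivalent to ``$F$-almost everywhere'', and it fails at endpoints of the support. For example, take $Y\sim\mathrm{Unif}[0,1]$, $Q=-1$ on $(-\infty,0]$ and $Q(y)=y$ on $(0,\infty)$. Then $Q$ is non-decreasing and $Q(Y)=Y$ almost surely, yet $Q(0)=-1\neq 0=F(0)$, with $0$ in the support. What your argument actually proves is $Q=F$ on $\{0<F<1\}$. The complement $\{F=0\}\cup\{F=1\}$ is a null set for the law of $Y$, because $F$ is continuous, so phrase the almost-everywhere version that way.

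Finally, tidy the garbled sentence at the end of your second case (``But then $F(y_0)=0\le b$ unless $b<0$\dots''). By that point you have already derived $b<0$ and $F(y_0)=0$. The opening remark that $F(Y)$ is uniform is never used and can be dropped.
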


If the distribution function is not everywhere continuous, we can state a similar result, but we must account for randomization. Below, we recall the definition of lexicographic order on $\mathbb{R}\times[0,1]$.

\begin{definition}[Lexicographic order]
    The lexicographic order $(y, \tau) \leq (y', \tau ')$ on $\mathbb{R} \times [0,1]$ is defined to mean that $y<y'$ or both $y=y'$ and $\tau \leq \tau'$.
    \label{lexicographicorder}
\end{definition}
With this definition in mind, we can now adjust the above lemma for general distribution functions \cite[Lemma~2]{lspm}. 
\begin{lemma}
    \label{randomizeduniform}
    Let $\prob_F$ be a probability measure on $\mathbb{R}$ with distribution function $F$, and let $Y$ be a random variable distributed as $\prob_F$. Let $U$ be the induced probability measure of the uniform distribution on $[0,1]$ and $\tau _U\sim \mathrm{Unif}[0,1]$ independent of $Y$. Define $Q:\mathbb{R}\times [0,1] \rightarrow \mathbb{R}$ as a non-decreasing function with respect to the lexicographic order on $\mathbb{R} \times [0,1]$. Suppose the image $(\prob_F \times U)Q^{-1}$ of the product $\prob_F \times U$ under the mapping $Q$ is uniform on $[0,1]$, that is, $Q(Y,\tau_U) \sim \mathrm{Unif[0,1]}$. Then, for all $y$ and $\tau$,
    \begin{equation}
        \label{equalityindistribution}
        Q(y, \tau)=(1-\tau)F(y-)+\tau F(y).
    \end{equation}  
    Here, $F(y-)=\lim _{z \rightarrow y^-} F(z)$.
\end{lemma}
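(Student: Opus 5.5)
The plan is to pin down $Q$ on the "continuous part" of $F$ first and then on the atoms, using only the uniformity of $Q(Y,\tau_U)$ and monotonicity in the lexicographic order. Write $G(y,\tau):=(1-\tau)F(y-)+\tau F(y)$. The first observation is that $G$ is itself lexicographically non-decreasing, and that $G(Y,\tau_U)\sim\mathrm{Unif}[0,1]$. This is the randomized PIT. To check it, fix $u\in(0,1)$ and let $y_u=q_u$. Then $\{G(Y,\tau_U)\le u\}$ is the union of $\{Y<y_u\}$ with the event $\{Y=y_u,\ \tau_U\le (u-F(y_u-))/(F(y_u)-F(y_u-))\}$ when $y_u$ is an atom, up to null sets. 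Its probability is therefore $F(y_u-)+(u-F(y_u-))=u$.

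The core step is to show that for every $(y,\tau)$,
\[
\prob\big((Y,\tau_U)\le (y,\tau)\big)=(1-\tau)F(y-)+\tau F(y)=G(y,\tau),
\]
with the lexicographic order on the left. This is immediate, since the event splits as $\{Y<y\}\cup\{Y=y,\tau_U\le\tau\}$. Set $t=Q(y,\tau)$. By monotonicity, $(Y,\tau_U)\le(y,\tau)$ implies $Q(Y,\tau_U)\le t$, so $G(y,\tau)\le \prob(Q(Y,\tau_U)\le t)=t$. For the reverse inequality, the strict event $(Y,\tau_U)>(y,\tau)$ should force $Q(Y,\tau_U)\ge t$. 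Hence $1-G(y,\tau)\le\prob(Q(Y,\tau_U)\ge t)=1-t$, using that the uniform law has no atoms. This gives $t\le G(y,\tau)$, and so $Q(y,\tau)=G(y,\tau)$.

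I expect the main obstacle to be the precise handling of ties and nullsets. Non-decreasing only gives $Q(y',\tau')\ge Q(y,\tau)$ for $(y',\tau')\ge(y,\tau)$, and that is exactly what is used in the reverse inequality, so it is fine. A subtler point arises when $Q$ takes values outside $[0,1]$ at points of zero $\prob_F\times U$ mass: say $y$ lies in a gap of the support, or $Q$ is evaluated at $\tau=0$ or $1$. The inequalities above still hold there, because they only use probabilities of events, which are insensitive to where the mass is not. So the identity holds for all $(y,\tau)$, not merely almost surely. I would verify this carefully for $y$ below the support, where $G=0$ forces $Q(y,\tau)\le 0$ and $Q(y,\tau)\ge 0$, and similarly above the support. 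Lemma~\ref{definingproperty} then follows as the special case where $F$ is continuous, since $G(y,\tau)=F(y)$ does not depend on $\tau$.
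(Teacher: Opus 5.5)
Your core sandwich argument is correct, and it is the standard proof. The paper gives no proof of its own and defers to Vovk et al. The events $\{(Y,\tau_U)\le(y,\tau)\}$ and $\{(Y,\tau_U)>(y,\tau)\}$ have probabilities $G(y,\tau)$ and $1-G(y,\tau)$. Monotonicity maps them into $\{Q(Y,\tau_U)\le t\}$ and $\{Q(Y,\tau_U)\ge t\}$. Your first paragraph, showing $G(Y,\tau_U)$ is uniform, is never used; it only shows the conclusion is consistent.

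The gap is in your final paragraph. The equalities $\prob(Q(Y,\tau_U)\le t)=t$ and $\prob(Q(Y,\tau_U)\ge t)=1-t$ hold only for $t\in[0,1]$. In general they read $\phi(t)$ and $1-\phi(t)$ with $\phi(t)=\min\{\max\{t,0\},1\}$, so your two inequalities yield exactly $\phi(Q(y,\tau))=G(y,\tau)$.
\begin{itemize}
\item When $0<G(y,\tau)<1$, this forces $t\in(0,1)$ and hence $Q(y,\tau)=G(y,\tau)$. You should state this as an explicit first step rather than silently assume $t\in[0,1]$.
\item When $G(y,\tau)=0$, the first inequality $0\le\prob(Q(Y,\tau_U)\le t)$ is vacuous, and the second gives only $t\le 0$. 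Your claim that $G=0$ also forces $Q(y,\tau)\ge 0$ is false.
\end{itemize}
The fact that probabilities are insensitive to where there is no mass is exactly why $Q$ is unconstrained there. Concretely, take $Y\sim\mathrm{Unif}[0,1]$ and $Q(y,\tau)=y$. This $Q$ is lexicographically non-decreasing and $Q(Y,\tau_U)=Y$ is uniform, yet $Q(-1,\tau)=-1\neq 0=G(-1,\tau)$. Similarly, at an atom $y_0$ with $F(y_0-)=0$, you can make $Q(y_0,0)$ negative (lowering $Q$ to the left of $y_0$) without changing the law of $Q(Y,\tau_U)$.

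So with codomain $\R$, as the statement is literally written, the identity ``for all $y$ and $\tau$'' is false and cannot be proved. Your argument actually establishes:
\begin{itemize}
\item $Q=G$ wherever $0<G<1$;
\item $Q\le 0$ where $G=0$;
\item $Q\ge 1$ where $G=1$.
\end{itemize}
In particular, $Q=G$ everywhere once $Q$ takes values in $[0,1]$, which is the setting of Definition~\ref{rps}. You should add that hypothesis, or restrict the conclusion, rather than assert the boundary case. The same caveat applies to your derivation of Lemma~\ref{definingproperty} as a special case.
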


The two lemmas above show that monotonicity and the uniformity of PIT are defining properties of distribution functions. 
\par We can now move to present the standard construction of split conformal prediction intervals, based on \cite{vovk}.

\section{Conformal prediction intervals}

The Split Conformal Prediction Interval Method constructs a prediction interval that is marginally well-calibrated, regardless of sample size or the joint distribution of the samples. This method relies on loose \textit{exchangeability} assumptions of the observations, and the general conformal principle in the context of prediction intervals which was first introduced by Vovk in the early 2000s \citep{vovk}.

\par Under the \textit{exchangeability} assumptions, the method begins by splitting the observations into two disjoint subsets: a training set $\{(x_i, y_i): \, i \in \mathcal{I}_1\}$ and a calibration set $\{(x_i, y_i): i \in \mathcal{I}_2\}$, where $\mathcal{I}_1$ and $\mathcal{I}_2$ form a partition of $\{1, \dots, n\}$. For example, we can assume without loss of generality that $\mathcal{I}_1=\{1, \dots, m\}$ and $\mathcal{I}_2=\{m+1,\dots,n\}$. Given any regression algorithm $\mathcal{A}$, the training set is used for regression 
\[
\hat{y}(x) \leftarrow \mathcal{A}(\{(x_i, y_i) \,: \, i \in \mathcal{I}_1\}).
\]
Then, on $\mathcal{I}_2$, \textit{calibration scores} are computed between the target variable and the fitted regression model, in the form of absolute residuals:
\begin{equation}
    \label{cpscore}
E_i=|y_i-\hat{y}(x_i)|, \quad i \in \mathcal{I}_2.
\end{equation}
The empirical quantile of the absolute residuals is then computed, adjusted to the size of the calibration set, i.e. we redefine our adjusted confidence level $\tilde{\alpha}$ such that  $1 - \tilde{\alpha}=(1-\alpha)(1+\frac{1}{|\mathcal{I}_2|})$. The empirical quantile will then be $\hat{q}_{1-\tilde{\alpha},|\mathcal{I}_2|}(\mathcal{E})$ of $\mathcal{E}:=\{E_i : i \in \mathcal{I}_2\}$. The prediction interval for a new observation $x_{n+1}$ is then computed as
\begin{equation}
    \label{pi-conformalvolk}
    PI(x_{n+1})=[\hat{y}(x_{n+1})-\hat{q}_{1-\tilde{\alpha},|\mathcal{I}_2|}(\mathcal{E}), \, \hat{y}(x_{n+1})+\hat{q}_{1-\tilde{\alpha},|\mathcal{I}_2|}(\mathcal{E})].
\end{equation}
The procedure described here is also summarized in Algorithm~\ref{cp-method}.
\begin{algorithm}
\caption{Split Conformal Prediction Interval Method}
\label{cp-method}
\begin{algorithmic}[0]
\State \textbf{Input:} Dataset $\{(x_i, y_i)\}_{i=1}^n$, new observation $x_{n+1},$ miscoverage rate $\alpha \in (0,1)$, regression algorithm $\mathcal{A}$.
\State \textbf{Algorithm:} Partition $\{1, \dots, n\}$ into a training set $\mathcal{I}_1$ and a calibration set $\mathcal{I}_2$.
\State Fit  $\hat{y}(x) \leftarrow \mathcal{A} (\{(x_i, y_i)\}: i \in \mathcal{I}_1)$.
\State Compute conformity scores $E_i = |y_i - \hat{y}(x_i)|$ for $i \in \mathcal{I}_2$.
\State Compute the $1 - \tilde{\alpha} = (1-\alpha)(1+\frac{1}{|\mathcal{I}_2|})$ empirical quantile $\hat{q}_{1-\tilde{\alpha},|\mathcal{I}_2|}(\mathcal{E})$ of $\{E_i: i \in \mathcal{I}_2\}$.
\State \textbf{Output:}  
\[
\mathrm{PI}(x_{n+1}) = [\hat{y}(x_{n+1}) -\hat{q}_{1-\tilde{\alpha},|\mathcal{I}_2|}(\mathcal{E}),\; \hat{y}(x_{n+1})  + \hat{q}_{1-\tilde{\alpha},|\mathcal{I}_2|}(\mathcal{E})].
\]
\end{algorithmic}
\end{algorithm}

The split conformity score in \eqref{pi-conformalvolk} produces intervals of fixed width $2\hat{q}_{1-\tilde{\alpha}, |\mathcal{I}_2|}(\mathcal{E})$, the prediction being agnostic to varying uncertainty in different feature characteristics. In order to address this, several other conformity scores have been introduced in the literature. We mention \emph{conformalized quantile regression} \citep{CQR} and \emph{conformal direct prediction} \citep{francke}. In case of the latter, one first fits a quantile regression model on the residuals of the point prediction
\begin{align}
    \hat{q}^r_{1-\alpha}(x_i) \leftarrow \mathcal{B}(\{x_i, |y_i-\hat{y}(x_i)|: i\in \mathcal{I}_1\}),
\end{align}
then computes the conformity scores and its associated prediction interval as follows
\begin{align}
    \begin{aligned}[t]
        E_i&=\max\{\hat{y}(x_i) - \hat{q}^r_{1-\alpha}(x_i)-y_i, y_i-\hat{y}(x_i)-\hat{q}^r_{1-\alpha}(x_i)\}, \quad i\in\mathcal{I}_2,\\
    PI(x_{n+1})&=\left[\hat{y}(x_{n+1})-  \hat{q}^r_{1-\alpha}(x_{n+1}) - \hat{q}_{1-\tilde{\alpha}, |\mathcal{I}_2|}(\mathcal{E}), \hat{y}(x_{n+1})+ \hat{q}^r_{1-\alpha}(x_{n+1}) + \hat{q}_{1-\tilde{\alpha}, |\mathcal{I}_2|}(\mathcal{E})\right].
    \end{aligned}
    \label{eq:pi:direct}
\end{align}
\color{black}

For any of the aforementioned conformity scores, it is guaranteed that the prediction interval is marginally well-calibrated for exchangeable variables with this method, irrespective of sample size. Furthermore, an explicit upper bound can be given. For the split conformity score this was shown in \cite{lei2017}, and the same result is established for conformal quantile regression in \cite{CQR}. In what follows, we present the proof for conformal direct prediction.
\begin{theorem}
    \label{cpmarginalvalidity}
    Under the assumption that $(X_i, Y_i)_{i=1}^{n+1}$ are exchangeable, the prediction interval $PI(X_{n+1})$ constructed by the split conformal prediction, conformal quantile regression, or the conformal direct method is \textit{marginally} well-calibrated, i.e.,
    \[
    \prob(Y_{n+1} \in PI(X_{n+1})) \geq 1-\alpha.
    \]
    If, in addition, the conformity scores are almost surely distinct, then the prediction interval is nearly perfectly calibrated:
    \[
    \prob(Y_{n+1} \in PI(X_{n+1})) \leq 1-\alpha+\frac{1}{|\mathcal{I}_2|+1}.
    \]
\end{theorem}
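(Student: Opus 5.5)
The plan is to reduce all three constructions to the same statement about exchangeable scores, and then invoke Lemma~\ref{lemma2quantiles}. Write $n_2:=|\mathcal{I}_2|$ and condition on the training set $\{(X_i,Y_i): i\in\mathcal{I}_1\}$. After this conditioning, the fitted models $\hat{y}$ and $\hat{q}^r_{1-\alpha}$ (or the two quantile regressors in the CQR case) are fixed, deterministic functions.

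Define the test score $E_{n+1}$ by the same formula as the calibration scores, applied to $(X_{n+1},Y_{n+1})$. For conformal direct prediction this is
\[
E_{n+1}=\max\{\hat{y}(X_{n+1})-\hat{q}^r_{1-\alpha}(X_{n+1})-Y_{n+1},\; Y_{n+1}-\hat{y}(X_{n+1})-\hat{q}^r_{1-\alpha}(X_{n+1})\}.
\]
The points $(X_i,Y_i)$, $i\in\mathcal{I}_2\cup\{n+1\}$, remain exchangeable conditionally on the training points, because exchangeability of the full sequence is preserved under conditioning on a fixed subset of indices. Applying the same fixed measurable map to each point therefore gives conditionally exchangeable scores $\{E_i: i\in\mathcal{I}_2\}\cup\{E_{n+1}\}$.

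The key deterministic step is the event identity
\[
\{Y_{n+1}\in PI(X_{n+1})\}=\{E_{n+1}\le \hat{q}_{1-\tilde{\alpha},n_2}(\mathcal{E})\}.
\]
For the direct method, $Y_{n+1}$ lies in the interval in \eqref{eq:pi:direct} if and only if both
\[
\hat{y}(X_{n+1})-\hat{q}^r_{1-\alpha}(X_{n+1})-Y_{n+1}\le \hat{q}_{1-\tilde\alpha,n_2}(\mathcal{E})
\quad\text{and}\quad
Y_{n+1}-\hat{y}(X_{n+1})-\hat{q}^r_{1-\alpha}(X_{n+1})\le \hat{q}_{1-\tilde\alpha,n_2}(\mathcal{E})
\]
hold, which is exactly the condition that their maximum is at most $\hat{q}_{1-\tilde\alpha,n_2}(\mathcal{E})$. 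The same check works for the absolute residual score $|y-\hat y(x)|$ and for the CQR score $\max\{\hat q_{lo}(x)-y,\,y-\hat q_{hi}(x)\}$. No sign assumption on $\hat{q}^r$ is needed, since the equivalence is purely algebraic.

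Now apply Lemma~\ref{lemma2quantiles} with $n=n_2$, $Z_1,\dots,Z_{n_2}$ the calibration scores, $Z_{n_2+1}=E_{n+1}$, and level $1-\alpha$. Since $1-\tilde\alpha=(1-\alpha)(1+1/n_2)$, this gives, conditionally on the training data,
\[
\prob(E_{n+1}\le \hat{q}_{1-\tilde\alpha,n_2})\ge 1-\alpha .
\]
Under almost sure distinctness of the scores, the same lemma gives the conditional upper bound $1-\alpha+1/(n_2+1)$. Taking expectations over the training set (tower property) yields both marginal bounds.

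One edge case needs attention. If $(1-\alpha)(1+1/n_2)>1$, the quantile is $+\infty$ by convention (or the index $\lceil n_2(1-\tilde\alpha)\rceil$ exceeds $n_2$). Then coverage is trivially $1$, and the upper bound still holds because in this regime $1-\alpha+1/(n_2+1)\ge 1$; I would note this explicitly, or rely on the lemma's convention.

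I expect the main obstacle to be justifying the use of Lemma~\ref{lemma2quantiles} rigorously after conditioning. Two things must be checked: that the scores are exchangeable given $\mathcal{I}_1$, and that the distinctness hypothesis is interpreted conditionally. I would handle both by noting that the score map is a fixed function once $\mathcal{I}_1$ is given.
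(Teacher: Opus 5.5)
Your proposal is correct and follows essentially the same route as the paper: reduce coverage to the event $E_{n+1}\le \hat{q}_{1-\tilde{\alpha},|\mathcal{I}_2|}(\mathcal{E})$ via the algebraic equivalence for the max-type score, then apply Lemma~\ref{lemma2quantiles} to the exchangeable scores. The differences are only extra care: you condition on the training split and use the tower property, you check the absolute-residual and CQR scores directly where the paper only cites them, and you handle the edge case $(1-\alpha)(1+1/|\mathcal{I}_2|)>1$ explicitly.
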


    \begin{proof}
    For split conformal prediction, and conformal quantile regression see \cite{lei2017} and \cite{CQR}, respectively. We complete the proof for the conformal direct prediction.

    Observe that 
    \begin{align}
        &\begin{aligned}Y_{n+1}\geq \hat{y}(X_{n+1}) - \hat{q}^r_{1-\alpha}(X_{n+1}) - &\hat{q}_{1-\tilde{\alpha}, |\mathcal{I}_2|}(\mathcal{E})\\
            \iff &\hat{y}(X_{n+1}) - \hat{q}^r_{1-\alpha}(X_{n+1}) -Y_{n+1}\leq \hat{q}_{1-\tilde{\alpha}, |\mathcal{I}_2|}(\mathcal{E}),
        \end{aligned}\\
        &\begin{aligned}Y_{n+1}\leq \hat{y}(X_{n+1}) + \hat{q}^r_{1-\alpha}(X_{n+1}) + &\hat{q}_{1-\tilde{\alpha}, |\mathcal{I}_2|}(\mathcal{E})\\
            \iff &Y_{n+1} -\hat{y}(X_{n+1}) - \hat{q}^r_{1-\alpha}(X_{n+1}) \leq \hat{q}_{1-\tilde{\alpha}, |\mathcal{I}_2|}(\mathcal{E}).
        \end{aligned}
    \end{align}
    Consequently, using the definition of $E_{n+1}$ in \eqref{eq:pi:direct}, we obtain
    \begin{align}
        Y_{n+1}\in PI(X_{n+1})\iff E_{n+1}\leq \hat{q}_{1-\tilde{\alpha}, |\mathcal{I}_2|}(\mathcal{E}).
    \end{align}
    Since the original pairs $(X_i, Y_i)_{i=1}^{n+1}$ are exchangeable, so are the conformity scores $E_i$, as measurable functions of exchangeable random variables. Therefore, we can apply Lemma~\ref{lemma2quantiles} to get
    \begin{align}
         \mathbb{P}(Y_{n+1}\in PI(X_{n+1}))= \mathbb{P}(E_{n+1}\leq \hat{q}_{1-\tilde{\alpha}, |\mathcal{I}_2|}(\mathcal{E}))\geq 1- \alpha.
    \end{align}
    Under the additional assumption of the conformity scores being almost surely distinct, we also get the upper bound
    \begin{align}
        \mathbb{P}(Y_{n+1}\in PI(X_{n+1}))= \mathbb{P}(E_{n+1}\leq \hat{q}_{1-\tilde{\alpha}, |\mathcal{I}_2|}(\mathcal{E}))\leq 1-\alpha + \frac{1}{1+|\mathcal{I}_2|}.
    \end{align}
\end{proof}
Since prediction intervals do not specify which values are more likely within the given interval, extensions to predictive distributions have arisen in the literature \cite{lspm, split-cpd}. The following section details the split variant of conformal predictive systems, including the necessary and sufficient conditions imposed on the conformity measures.

\section{Conformal predictive systems}\label{sec:cpd}
In the following section we recall the key theoretical background on conformal predictive distributions that serves as a necessary basis for the proposed measure-correction solution and its theoretical result in Theorem~\ref{distance-tailcorrected}. The results below are due to Vovk and co-authors derived in a sequence of works \citep{lspm, split-cpd}. At a high level, they aim to construct an object based on conformity scores that acts similarly to a cumulative distribution function, while retaining a marginal validity guarantee equivalent to the ones in Theorem~\ref{cpmarginalvalidity}. However, as we shall see later, this object, called a \emph{conformal transducer}, includes randomization to break ties among conformity scores and to account for its own discontinuities, in light of Lemma~\ref{randomizeduniform}.

\par This section proceeds in two steps. We first recall the necessary definitions and conditions for a split conformal transducer to qualify as a randomized predictive system (Definition~\ref{rps}). We then present the algorithm for constructing the conformal predictive distribution and identify the technical obstacle, i.e. mass placed at $\pm\infty$ in the underlying probability measure that motivates the measure correction of Section~5 and, ultimately, the quantile-matching approach of Section~6.

\begin{definition}[Randomized predictive system]
\label{rps}\sloppy
 A function $Q: (\mathbb{R}^{p+1})^{n+1} \times [0,1] \to [0,1]$ is called a \textit{randomized predictive system} (RPS) if it satisfies:
\begin{enumerate}
    \item \textit{Monotonicity}: For any fixed training sequence $(z_1, \dots, z_n) \in (\mathbb{R}^{p+1})^n$ and test feature $x_{n+1} \in \mathbb{R}^p$, the function $ Q(z_1, \dots, z_n, (x_{n+1}, y), \tau)$ is non-decreasing both in $y$ for a fixed $\tau$, and in $\tau$ for fixed $y$.
    \item \textit{Boundary Stability:} For all $(z_1, \dots, z_n) \in (\mathbb{R}^{p+1})^n$ and $x_{n+1} \in \mathbb{R}^p$,
    \begin{equation*}
        \lim_{y \to -\infty} Q(z_1, \dots, z_n, (x_{n+1}, y), 0) = 0 \; \text{and} \; \lim_{y \to \infty} Q(z_1, \dots, z_n, (x_{n+1}, y), 1) = 1.
    \end{equation*}
    \item \textit{Validity:} For any exchangeable sequence of random variables $Z_1, \dots, Z_{n+1}$ taking values in $\mathbb{R}^{p+1}$ and any $\tau \sim \mathrm{Unif}[0,1]$ independent of the sequence, the random variable  $Q(Z_1, \dots, Z_n, Z_{n+1}, \tau)$
    follows the uniform distribution on $[0,1]$, i.e.,
    \begin{equation}
        \label{rpsvalidity}
    \prob( Q(Z_1, \dots, Z_n, Z_{n+1}, \tau)\le \alpha) = \alpha \quad \text{ for all } \alpha \in [0,1].
    \end{equation}
\end{enumerate}

\end{definition}

\begin{definition}[Randomized predictive distribution]
    A \textit{randomized predictive distribution} (RPD) is defined as the function 
    \begin{equation}
        Q_n:(y,\tau) \in \mathbb{R} \times [0,1] \mapsto Q(z_1, \dots , z_n, (x_{n+1},y), \tau),
    \end{equation}
    which is the output of the randomized predictive system $Q$ on a training sequence $z_1, \dots, z_n$ and a test feature $x_{n+1}$, coupled with a random draw $\tau \sim \mathrm{Unif}[0,1]$.
\end{definition}

To construct an RPS using the split conformal procedure, certain restrictions must be imposed on the underlying conformity scores. Here, we define split conformity measures and the associated split conformity scores in accordance with \cite{split-cpd} and briefly summarize the necessary and sufficient conditions on the conformity scores in order to construct an RPS.

\begin{definition}[Split conformity measure]\sloppy
    A family of measurable functions $E_m:$   $(\mathbb{R}^{p+1})^{m+1}$ $\rightarrow \mathbb{R}$ with $m=1,2, \dots$, is called a \textit{split conformity measure}.
\end{definition}

As before, we shall split the sequence $z_1, \dots, z_n$ into two: a training set $\{z_i : i \in \mathcal{I}_1\}$ and a calibration set $\{z_i: i \in \mathcal{I}_2\}$ where $\mathcal{I}_1$ and $\mathcal{I}_2$ form a partition of the index set $\{1, \dots, n\}$. For ease of presentation, we shall assume without loss of generality, $\mathcal{I}_1=\{1, \dots, m\}$ and $\mathcal{I}_2=\{m+1, \dots, n\}$. Suppose we fix $m$; from now on, we omit the subscript $m$ when referring to a split conformity measure corresponding to a split with $m$ training objects. The corresponding split conformity scores are defined below.

\begin{definition}[Split conformity score]
    For a split conformity measure $E:$ $(\mathbb{R}^{p+1})^{m+1}$ $\rightarrow~\mathbb{R}$ and a fixed $y \in \mathbb{R}$, the corresponding \textit{split conformity scores} are defined by
    \begin{equation}
    \label{confscore}
        \begin{split}
            E_i&=E(z_1, \dots, z_m, (x_{m+i},y_{m+i})), \quad i=1,\dots, n-m.\\
            E^y&=E(z_1, \dots,z_m, (x_{n+1}, y)).
        \end{split}
    \end{equation}
\end{definition}
\begin{remark}
    We can now formally define the split conformity measure used in Section~3. Under the notation used in this section, the standard Split Conformal Prediction method uses the split conformity measure $E_{CP}(z_1,\dots,z_m, (x_{n+1}, y))=|y-\hat{y}(x_{n+1})|$. The corresponding split conformity scores are then given by \eqref{cpscore}.
\end{remark}

\begin{definition}[Split conformal transducer]
    The \textit{split conformal transducer} determined by a conformity measure~$E$ and its corresponding conformity scores is defined as 
    \begin{equation}
    \label{splitconformaltransducer}
    \begin{split} 
        Q(z_1, \dots,& z_n, (x_{n+1},y), \tau):=\\ &\frac{|\{i=m+1, \dots, n : \, E_i < E^y \}| + \tau |\{i=m+1, \dots, n : E_i=E^y\}|+\tau}{n-m+1}.
    \end{split}
    \end{equation}
    Conversely, a function is called a \textit{split conformal transducer} if it is the split conformal transducer of some split conformity measure.
\end{definition}

\begin{definition}[Split conformal predictive system]
    A \textit{split conformal predictive system} is a function that is both a split conformal transducer and a randomized predictive system.
\end{definition}

\begin{definition}[Split conformal predictive distribution]
    For a split conformal predictive system $Q$, its randomized predictive distribution $Q_n$ is called a \textit{split conformal predictive distribution}.
\end{definition}

It is noteworthy to observe that the standard property of validity \eqref{rpsvalidity} adapted to split conformity measures is satisfied automatically \cite[Theorem~11.1]{vovk}. It is also stated as a known fact in \cite[Section~3]{split-cpd}, and is summarized in the following theorem.

\begin{theorem}
\label{validitythm}
    Let $Q:(\mathbb{R}^{p+1})^{n+1}\times[0,1] \rightarrow [0, 1]$ be a split conformal transducer associated with some conformity measure $E$. If $Z_1, \dots, Z_n, Z_{n+1}$ are exchangeable, where $Z_{n+1}=(X_{n+1}, Y_{n+1})$, and $\tau \sim \mathrm{Unif}[0,1]$ is~independent of $Z_1, \dots, Z_n, Z_{n+1}$, then $Q(Z_1, \dots, Z_n,$ $ Z_{n+1},\tau)$ follows the uniform distribution on $[0,1]$.
\end{theorem}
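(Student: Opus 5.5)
We will prove Theorem~\ref{validitythm} by conditioning on the training part and reducing to a statement about exchangeable scores. Write $N:=n-m$ for the calibration size. Conditional on $Z_1,\dots,Z_m$, the map $E(z_1,\dots,z_m,\cdot)$ is a fixed measurable function. The remaining variables $Z_{m+1},\dots,Z_{n+1}$ are conditionally exchangeable, since exchangeability of the whole sequence is preserved under conditioning on the first $m$ coordinates. Hence the scores $E_{1},\dots,E_{N}$ and $E_{N+1}:=E^{Y_{n+1}}$ are conditionally exchangeable. It therefore suffices to show, for exchangeable reals $E_1,\dots,E_{N+1}$ and independent $\tau\sim\mathrm{Unif}[0,1]$, that
\[
Q=\frac{|\{i\le N: E_i<E_{N+1}\}|+\tau\,|\{i\le N:E_i=E_{N+1}\}|+\tau}{N+1}
\]
is uniform. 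Integrating the resulting conditional uniformity over $Z_1,\dots,Z_m$ then gives the unconditional claim.

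Next we condition further on the multiset (equivalently, the order statistics) $\{E_1,\dots,E_{N+1}\}$. By exchangeability, given this multiset the test score $E_{N+1}$ is equally likely to occupy each of the $N+1$ positions. Suppose the distinct values are $v_1<\dots<v_k$ with multiplicities $c_1,\dots,c_k$, and let $a_j=\sum_{l<j}c_l$. If $E_{N+1}=v_j$, which happens with conditional probability $c_j/(N+1)$, then $|\{i\le N:E_i<E_{N+1}\}|=a_j$ and $|\{i\le N:E_i=E_{N+1}\}|+1=c_j$. Thus
\[
Q=\frac{a_j+\tau c_j}{N+1},
\]
which, since $\tau$ is independent of everything else, is uniform on the interval $[a_j/(N+1),(a_j+c_j)/(N+1)]$.

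The conditional law of $Q$ is therefore a mixture. Each interval $I_j=[a_j,a_j+c_j]/(N+1)$ has length $c_j/(N+1)$ and carries mixture weight $c_j/(N+1)$, and the $I_j$ tile $[0,1]$ without overlap. A mixture of uniforms on a partition of $[0,1]$, with weights equal to the interval lengths, is exactly $\mathrm{Unif}[0,1]$. Checking $\prob(Q\le\alpha)=\alpha$ directly is a one-line computation.

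The main obstacle is making the second conditioning rigorous: we need that, given the multiset, the index of the test point among tied values is uniform. The clean way to do this is to use the fact that exchangeability makes the joint law invariant under permutations. For any permutation $\pi$ of $\{m+1,\dots,n+1\}$ and any measurable symmetric event $S$ determined by the multiset, this gives $\prob(E_{N+1}=v_j,\,S)=\frac{c_j}{N+1}\prob(S)$. Concretely, we average the indicator $\ind_{\{E_{N+1}=v_j\}}$ over all positions, obtaining $\frac{1}{N+1}\sum_i\ind_{\{E_i=v_j\}}=c_j/(N+1)$. Everything else is bookkeeping.
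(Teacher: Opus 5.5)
Your proof is correct. The paper does not prove this theorem itself but imports it from \cite[Theorem~11.1]{vovk} and \cite[Section~3]{split-cpd}, and your argument is the standard proof of that fact: fix the proper training set, condition on the bag of calibration-plus-test scores so that the test score's position is uniform, and let $\tau$ spread each tied block uniformly over its subinterval of $[0,1]$.

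The only thing to tidy is that $v_j$ and $c_j$ are themselves functions of the multiset. Your displayed identity should therefore be read as $\prob(E_{N+1}=v_j\mid\mathcal{G})=c_j/(N+1)$ for the $\sigma$-field $\mathcal{G}$ of permutation-invariant events, i.e.\ $\prob(E_{N+1}=v_j,S)=\mathbb{E}\bigl[\tfrac{c_j}{N+1}\ind_S\bigr]$, which is exactly what your averaging step gives.
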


Since the standard property of validity is automatically satisfied, the question of whether a split conformal transducer is a Randomized Predictive System boils down to checking whether the monotonicity and boundary stability conditions are satisfied. Vovk et al.\ give in \cite{split-cpd} the necessary and sufficient conditions for this to happen, which we summarize here. The following definitions are as in \cite[Section~2.2]{lspm}.

\begin{definition}[Monotonic split conformity measure]
    A split conformity measure $E$ is called monotonic if $E(z_1, \dots, z_m, (x,y))$ is monotonically increasing in $y$, i.e.,
    \[
    y \leq y' \Rightarrow E(z_1, \dots, z_m, (x,y)) \leq E(z_1, \dots, z_m, (x,y')),
    \]
    for all $x$.
\end{definition}

\begin{definition}[Balanced split conformity measure]
    A monotonic split conformity measure $E$ is balanced if, for all $x$ and, for any $m$ and $z_1, \dots, z_m$, the set 
    \[
    \mathrm{conv} \, E(z_1, \dots, z_m, (x, \mathbb{R})):= \mathrm{conv}\{E(z_1,\dots, z_m, (x,y)) : y \in \mathbb{R}\}
    \]
    does not depend on $x$ and is an open interval in $\mathbb{R}$.
\end{definition}

\begin{remark}
    In the context of conformal prediction, an underlying assumption is that the conformity measure is \textit{coercive}, in the sense of $\mathrm{conv}\, E(z_1, \dots, z_m, \mathbb{R}^{p+1})= (-\infty, +\infty)$. Intuitively, this means that as $y$ drifts further away from our proposed model prediction, the (split) conformity measure notices this aspect and reflects it in its associated conformity score. We carry this assumption from now on. 
\end{remark}

\begin{theorem}
    The split conformal transducer based on a split conformity measure $E$ is an RPS if~and~only~if $E$ is balanced and monotonic. 
    \label{splittransducerrps}
\end{theorem}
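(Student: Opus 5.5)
Since validity holds for every split conformal transducer by Theorem~\ref{validitythm}, the statement reduces to showing that the transducer \eqref{splitconformaltransducer} satisfies monotonicity and boundary stability exactly when $E$ is monotonic and balanced. I would prove the two directions separately, working throughout with fixed calibration scores $E_{m+1},\dots,E_n$ and the test score $E^y$. Write $N_<(y)=|\{i: E_i<E^y\}|$ and $N_=(y)=|\{i: E_i=E^y\}|$, so that $Q_n(y,\tau)=(N_<(y)+\tau N_=(y)+\tau)/(n-m+1)$.

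\emph{Sufficiency.} Monotonicity in $\tau$ for fixed $y$ is immediate, because the coefficient of $\tau$ is $N_=(y)+1\ge 1$. For monotonicity in $y$, take $y\le y'$; monotonicity of $E$ gives $E^y\le E^{y'}$. If $E^y=E^{y'}$ the two values coincide. If $E^y<E^{y'}$, every $i$ counted in $N_<(y)+N_=(y)$ satisfies $E_i\le E^y<E^{y'}$, so $N_<(y')\ge N_<(y)+N_=(y)$. Since $\tau\le 1$, this yields $Q_n(y',\tau)\ge N_<(y')/(n-m+1)\ge (N_<(y)+N_=(y)+\tau)/(n-m+1)\cdot$ up to the $+\tau$ slack. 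To close this, I would compare $Q_n(y,\tau)\le (N_<(y)+N_=(y)+1)/(n-m+1)$ with $Q_n(y',\tau)\ge N_<(y')/(n-m+1)$; this works once the lexicographic comparison is handled. I expect this to be fine for a fixed $\tau$, but I will check the edge case where no calibration score lies in $(E^y,E^{y'})$ carefully. For boundary stability, balancedness says that $\mathrm{conv}\,E(\dots,(x,\mathbb{R}))$ is an open interval $(a,b)$ not depending on $x$. Every calibration score $E_i$ is a value of $E$ at some feature $x_{m+i}$, so it lies in $(a,b)$. Because $E^y$ is monotone in $y$ with range convex hull $(a,b)$, we have $E^y\downarrow a$ as $y\to-\infty$. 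Hence for sufficiently negative $y$, $E^y<\min_i E_i$, which gives $N_<=N_==0$ and $Q_n(y,0)=0$. Symmetrically, as $y\to\infty$, $E^y>\max_i E_i$, so $N_<=n-m$ and $Q_n(y,1)=1$. Openness of the interval is exactly what forces these strict inequalities.

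\emph{Necessity.} This is the main obstacle, because it requires constructing adversarial calibration data.

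If $E$ is not monotonic, there exist $y<y'$ and $z_1,\dots,z_m,x$ with $E(\dots,(x,y))>E(\dots,(x,y'))$. I would take a single calibration point $z_{m+1}=(x,y)$ and feature $x_{n+1}=x$. Then $Q_n(y,\cdot)$ has the tie term while $Q_n(y',\cdot)$ has $E^{y'}<E_{m+1}$. This gives $Q_n(y,\tau)=(2\tau)/2=\tau$ and $Q_n(y',\tau)=\tau/2$, so monotonicity in $y$ fails for any $\tau>0$.

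If $E$ is monotonic but not balanced, there are two cases.
\begin{enumerate}
\item The convex hull depends on $x$. Then there are features $x,x'$ with, say, $\inf E(\dots,(x',\mathbb{R}))<\inf E(\dots,(x,\mathbb{R}))$. I would choose a calibration point with feature $x'$ whose score lies below $\inf_y E(\dots,(x,y))$, and test at $x_{n+1}=x$. Then $N_<(y)\ge 1$ for all $y$, so $\lim_{y\to-\infty}Q_n(y,0)\ge 1/(n-m+1)>0$, contradicting boundary stability. The supremum case is symmetric.
\item The hull does not depend on $x$ but is not open, say it contains its infimum $a=E(\dots,(x,y_0))$. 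I would put a calibration point $(x,y_0)$. For $y\le y_0$, monotonicity forces $E^y=a$, so $N_=(y)\ge 1$. Since $\tau=0$ kills the tie term, I need to recheck this case, and this is where the $\tau$-dependence of the boundary conditions must be exploited. At $\tau=1$ the upper end fails analogously, so I would attack the closed-upper-end case with $\tau=1$. For a closed lower end, I would instead compare with the definition's $\tau$ convention and, if needed, use two calibration points to get a strictly positive limit.
\end{enumerate}

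I anticipate that careful bookkeeping in these boundary cases is where most of the work lies.
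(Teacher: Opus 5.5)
The paper gives no proof of this statement; it cites \cite{split-cpd}. Your overall plan is the natural one: validity comes free from Theorem~\ref{validitythm}, sufficiency follows by counting, and necessity by adversarial calibration data. However, the necessity half has a genuine gap in your Case~2, and the strategy you sketch there cannot work.

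Suppose the common hull contains its infimum $a$. Then $a$ is attained at \emph{every} feature, because the convex hull of a subset of $\R$ contains the subset's infimum only if the subset does. In particular $a$ is attained at the test feature, so by monotonicity $E^y=a$ for all sufficiently negative $y$, and
\[
Q_n(y,0)=\frac{|\{i: E_i<a\}|}{n-m+1}=0,
\]
since every calibration score is at least $a$. Ties are invisible at $\tau=0$, so no number of calibration points placed at $(x,y_0)$ produces a violation. Your claim that ``at $\tau=1$ the upper end fails analogously'' is also false: if the supremum $b$ is attained, then $Q_n(y,1)=(|\{i:E_i\le b\}|+1)/(n-m+1)=1$ for all large $y$.

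Concretely, $E(z_1,\dots,z_m,(x,y))=\max\{y,0\}$ is monotonic with $x$-independent hull $[0,\infty)$, and its split conformal transducer is an RPS. So openness cannot be extracted from the RPS property at all, and the ``only if'' direction with the open-interval clause is false without an extra hypothesis.

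The missing ingredient is the coercivity assumption in the remark preceding the theorem. Once you have shown that the hull does not depend on $x$, it must coincide with $\mathrm{conv}\,E(z_1,\dots,z_m,\R^{p+1})=(-\infty,\infty)$, which is open, so Case~2 is vacuous. Coercivity also closes a hole in your Case~1. Without it, two hulls may differ only in whether a shared endpoint is attained, which your strict-inequality construction does not cover (a calibration score equal to the attained endpoint handles it). With coercivity, any hull other than $\R$ has a finite endpoint, and some other feature has a strictly smaller infimum or strictly larger supremum, so your construction applies verbatim.

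In the sufficiency part, you leave the monotonicity-in-$y$ step open and misdiagnose why. You drop the $+\tau$ term from $Q_n(y',\tau)$ and then try to recover it by comparing $(N_<(y)+N_=(y)+1)/(n-m+1)$ with $N_<(y')/(n-m+1)$. That is the lexicographic inequality $Q_n(y,1)\le Q_n(y',0)$, which split conformal transducers do \emph{not} satisfy. It fails exactly in the edge case you flag, where no calibration score lies in $(E^y,E^{y'})$. It is also not needed, since monotonicity in Definition~\ref{rps} is coordinatewise. Keep the $\tau$:
\[
Q_n(y',\tau)\ge \frac{N_<(y')+\tau}{n-m+1}\ge\frac{N_<(y)+N_=(y)+\tau}{n-m+1}\ge Q_n(y,\tau).
\]
Finally, in the non-monotonic case you have $n-m$ calibration points, not one. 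Your values $Q_n(y,\tau)=\tau$ and $Q_n(y',\tau)=\tau/2$ hold only for $n-m=1$. Placing all calibration points at $(x,y)$ gives $Q_n(y,\tau)=\tau>\tau/(n-m+1)=Q_n(y',\tau)$ for every $\tau>0$.
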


\begin{corollary}
    The split conformal transducer based on the following modified split conformity measure, stemming from the standard conformal prediction method, is an RPS:
  \begin{equation}
E_{CP}(z_1, \dots, z_m, (x_{n+1},y))
= y - \hat{y}(x_{n+1}).
\label{modified-cp-score}
\end{equation}
\end{corollary}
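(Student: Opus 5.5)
The plan is to apply Theorem~\ref{splittransducerrps}: the split conformal transducer built on $E_{CP}$ in \eqref{modified-cp-score} is an RPS as soon as $E_{CP}$ is a monotonic and balanced split conformity measure. So the proof reduces to checking three things: measurability, monotonicity and balancedness of the single function $E_{CP}(z_1,\dots,z_m,(x,y)) = y - \hat{y}(x)$, where $\hat{y} = \mathcal{A}(z_1,\dots,z_m)$ is the regressor trained on the first $m$ examples.

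\textbf{Measurability.} I will assume, as is implicit throughout the paper, that the regression algorithm $\mathcal{A}$ yields a measurable map $(z_1,\dots,z_m,x)\mapsto \hat{y}(x)$. Then $E_{CP}$ is measurable as a difference of measurable functions, so it is a split conformity measure.

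\textbf{Monotonicity.} Fix $z_1,\dots,z_m$ and $x$. The term $\hat{y}(x)$ does not depend on $y$, so the map $y \mapsto y - \hat{y}(x)$ is a translation of the identity. Hence $y \le y'$ implies $E_{CP}(\dots,(x,y)) \le E_{CP}(\dots,(x,y'))$.

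\textbf{Balancedness.} For fixed $z_1,\dots,z_m$ and $x$, the set $\{y - \hat{y}(x) : y\in\R\}$ equals $\R$, because $\hat{y}(x)$ is a finite real number and translation is a bijection of $\R$. Its convex hull is therefore $(-\infty,+\infty)$. This set does not depend on $x$ (nor on $m$ or on the training data), and it is an open interval. This is also consistent with the coercivity assumption in the preceding remark.

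With all three conditions verified, Theorem~\ref{splittransducerrps} gives that the associated split conformal transducer is an RPS. Validity is in any case automatic by Theorem~\ref{validitythm}.

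The only point needing care, and the one I would stress, is why the absolute residual $|y-\hat{y}(x)|$ of Section~3 had to be replaced by this signed residual. The absolute residual is not monotonic in $y$: it decreases for $y<\hat{y}(x)$. Its range is also $[0,\infty)$, which is not open. Removing the absolute value fixes both defects at once, and beyond that observation the argument is a routine verification.
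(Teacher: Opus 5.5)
Your proposal is correct and matches the paper's reasoning. The paper gives no separate proof and treats the corollary as an immediate application of Theorem~\ref{splittransducerrps}. Your check that $y\mapsto y-\hat{y}(x)$ is non-decreasing and has range $\mathbb{R}=(-\infty,+\infty)$, independent of $x$, is exactly the monotonicity and balancedness verification that the theorem requires.
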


Now that we have described the types of split conformity measures that produce a conformal predictive system, we give below the general algorithm to produce a conformal predictive distribution. Note that, by construction, the conformal predictive distribution still depends on $\tau \sim \mathrm{Unif}[0,1]$. As we let $\tau$ travel from $0$ to $1$, we obtain an interval for each fixed $y$. This is a ``fuzzy'' distribution and not a true distribution in the sense of producing a single output for a fixed $y$. We first give the pseudocode algorithm to create 'fuzzy' predictions \cite[Algorithm~1]{lspm}. We then adjust the construction to allow for randomized predictive distributions \cite[Algorithm~1]{split-cpd}.

\begin{algorithm}
\caption{'Fuzzy' Split Conformal Predictive System}
\label{fuzzy-method}
\begin{algorithmic}[0]
\State \textbf{Input:} Dataset $\{z_i=(x_i, y_i)\}_{i=1}^n$, observation (test object) $x_{n+1}.$ 
\State \textbf{Algorithm:} Partition $\{1, \dots, n\}$ into a training set $\mathcal{I}_1=\{1, \dots, m\}$ and a calibration set $\mathcal{I}_2=\{m+1, \dots, n\}$.
        \For{$i \in \{1, \dots, n-m\}$}
            \State Solve for $C_i$ in the equation $E(z_1, \dots, z_m, (x_{m+i}, y_{m+i}))=E(z_1, \dots, z_m, (x_{n+1}, C_i))$.
        \EndFor
\State Sort $C_1, \dots, C_{n-m}$ in ascending order to obtain $C_{(1)}\leq \dots \leq C_{(n-m)}$, and set $C_{(0)}=-\infty$ and $C_{(n-m+1)}=+\infty$.

\State \textbf{Output:}  Return a "fuzzy" predictive distribution for the label $y$ of $x_{n+1}$
\begin{equation}
    \label{fuzzypd}
    Q^{\text{fuzzy}}_n(y):=\begin{cases}
        \left[\frac{i}{n-m+1}, \frac{i+1}{n-m+1}\right] \quad \text{if } y \in (C_{(i)}, C_{(i+1)}) \text{ for } i \in \{0,1,\dots,n-m\}.\\
        \\
        \left[\frac{i'-1}{n-m+1}, \frac{i''+1}{n-m+1} \right] \quad \text{if } y=C_{(i)} \text{ for } i \in \{0,1,\dots,n-m\},
    \end{cases}
\end{equation}
    where $i':=\min\{j : C_{(j)}=C_{(i)}\}$ and $i'':=\max\{j:C_{(j)}=C_{(i)}\}$.
\end{algorithmic}
\end{algorithm}

The `fuzzy' split conformal predictive system can be easily modified to obtain a pseudo-distribution function by simply drawing a $\tau$ value and using it as an input. The procedure is described in Algorithm~\ref{cpd-algorithm}.

\begin{algorithm}
\caption{Split Conformal Predictive System}
\label{cpd-algorithm}
\begin{algorithmic}[0]
\State \textbf{Input:} Dataset $\{z_i=(x_i, y_i)\}_{i=1}^n$, observation (test object) $x_{n+1}$. 
\State \textbf{Algorithm:} Draw $\tau \sim \mathrm{Unif}[0,1]$.
\State Partition $\{1, \dots, n\}$ into a training set $\mathcal{I}_1=\{1, \dots, m\}$ and a calibration set $\mathcal{I}_2=\{m+1, \dots, n\}$.
        \For{$i \in \{1, \dots, n-m\}$}
            \State Solve for $C_i$ in the equation $E(z_1, \dots, z_m, (x_{m+i}, y_{m+i}))=E(z_1, \dots, z_m, (x_{n+1}, C_i))$.
        \EndFor
\State Sort $C_1, \dots, C_{n-m}$ in ascending order to obtain $C_{(1)}\leq \dots \leq C_{(n-m)}$, and set $C_{(0)}=-\infty$ and $C_{(n-m+1)}=+\infty$.
\State \textbf{Output:}  Return a predictive distribution for the label $y$ of $x_{n+1}$:
\begin{equation}
    \label{randomizedpd}
    Q^{CPD}_n(y, \tau):=\begin{cases}
        \frac{i+\tau}{n-m+1} \quad \text{if } y \in (C_{(i)}, C_{(i+1)}) \text{ for } i \in \{0,1,\dots,n-m\},\\
        \\
        \frac{i'-1+(i''-i'+2)\tau}{n-m+1}\quad \text{if } y=C_{(i)} \text{ for } i \in \{0,1,\dots,n-m\},
    \end{cases}
\end{equation}
    where $i':=\min\{j : C_{(j)}=C_{(i)}\}$ and $i'':=\max\{j:C_{(j)}=C_{(i)}\}$.
\end{algorithmic}
\end{algorithm}

In the two algorithms above, there are $n-m$ equations to be solved for $C_i$ (which we call the \textit{conformal atoms} from now on). This is not a computationally expensive step, as it often amounts to simply rearranging the score's output. Using the standard conformal prediction score~\eqref{modified-cp-score}, it amounts to solving
\[
C_i - \hat{y}(x_{n+1})=y_{m+i}-\hat{y}(x_{m+i}) \Leftrightarrow C_i= \hat{y}(x_{n+1})+y_{m+i}-\hat{y}(x_{m+i}).
\]

The conformal atoms are then merely a shifted version of the conformity scores $E_i$ associated with the conformity measure~\eqref{modified-cp-score}. In other words, we have the relation
\begin{equation}
    C_i=\hat{y}(x_{n+1})+E_i.
    \label{fromscorestoatoms}
\end{equation}

\par Furthermore, in view of \eqref{fromscorestoatoms}, under the common assumptions in Section~3 that the conformity scores are almost surely distinct, the conformal atoms are also almost surely distinct. The output of the conformal predictive system in Algorithm~\ref{cpd-algorithm} then simplifies to
\begin{equation}
    \label{randomizedpd-simplified}
    Q^{CPD}_n(y, \tau):=\begin{cases}
        \frac{i+\tau}{n-m+1} \quad \text{if } y \in (C_{(i)}, C_{(i+1)}) \text{ for } i \in \{0,1,\dots,n-m\},\\
        \\
        \frac{i-1+2\tau}{n-m+1}\quad \text{if } y=C_{(i)} \text{ for } i \in \{0,1,\dots,n-m\}.
    \end{cases}
\end{equation}

\par The split conformal predictive system with output $Q_n^{CPD}$ induces an underlying probability measure $\mu_n$ on $\Bar{\mathbb{R}}$ satisfying
\begin{equation}
    \mu_n(\cdot \mid x_{n+1}) := \frac{\tau}{n-m+1} \delta_{-\infty} + \sum_{i=1}^{n-m} \frac{1}{n-m+1} \delta_{C_{(i)}(x_{n+1})} + \frac{1-\tau}{n-m+1} \delta_{+\infty},
\label{cps-measure}
\end{equation}
where $\delta$ is the usual Dirac delta measure. We write $C_{(i)}(x_{n+1})$ to highlight the dependence on the observed test point. Then, the randomized predictive system $Q^{CPD}(\cdot, (x_{n+1},y), \tau)$, in view of~\eqref{splitconformaltransducer}, satisfies
\begin{equation}
    Q^{CPD}(\cdot, (x_{n+1},y),\tau)=\mu_n([-\infty,y)|x_{n+1})+\tau\mu_n(\{y\}|x_{n+1}),
    \label{randomized-standard-measure}
\end{equation}
where we use the shorthand notation $Q^{CPD}(\cdot,(x_{n+1},y),\tau)=Q^{CPD}(z_1,\dots,z_n,(x_{n+1},y),\tau)$.

We call the output $Q^{CPD}_n(y, \tau)$ a pseudo-distribution in Algorithm~\ref{cpd-algorithm} since for $y\in (-\infty, C_{(1)})$ the output is $\frac{\tau}{n-m+1}$, whereas for $y\in(C_{(n-m)}, +\infty)$ the output is $\frac{n-m+\tau}{n-m+1}$. In particular, for every $\tau \in [0,1]$, the conformal predictive system output does not satisfy $\lim_{y \rightarrow -\infty} Q^{CPD}_n(y, \tau)=0$ and $\lim_{y \rightarrow \infty} Q^{CPD}_n(y, \tau) =1$ simultaneously.

\par Cumulative distribution functions have underlying measures defined on $\mathbb{R}$ that give no weight at $\pm \infty$, so it is unclear how to retrieve a probability density function from a measure on the extended real line $\Bar{\mathbb{R}}$. This issue motivates the measure-correcting solution of Section~5, which reassigns the tail mass at $\pm \infty$ to $C_{(1)}$ and $C_{(n-m)}$ respectively, yielding an asymptotic randomized predictive distribution.

\begin{definition}[Asymptotic RPS]
    \label{asymptoticRPS}
    A function $Q:(\mathbb{R}^{p+1})^{n+1} \times [0,1] \rightarrow [0,1]$ is called an \textit{asymptotic randomized predictive system} if it satisfies the monotonicity and boundary stability conditions as in Definition~\ref{rps}, and the validity condition \eqref{rpsvalidity} holds asymptotically as $n \rightarrow \infty$: for any exchangeable sequence $Z_1, \dots, Z_{n+1}$ taking values in $\mathbb{R}^{p+1}$ and any $\tau \sim \mathrm{Unif}[0,1]$ independent of the sequence, the random variable  $Q(Z_1, \dots, Z_n, Z_{n+1}, \tau)$ converges in distribution to $\mathrm{Unif}[0,1]$.
\end{definition}

\section{A measure correction solution}\label{sec:measure_correction}

Denote by $\tilde{\mu}_n(\cdot \mid x_{n+1})$ the tail-corrected measure with formula 
\begin{equation}
\label{tailcorrectedmeasure}
\tilde{\mu}_n(\cdot \mid x_{n+1})
:=
\frac{1+\tau}{n-m+1}\,\delta_{C_{(1)}(x_{n+1})}+
\sum_{i=2}^{n-m-1}
\frac{1}{n-m+1}\,\delta_{C_{(i)}(x_{n+1})}
+
\frac{2-\tau}{n-m+1}\,\delta_{C_{(n-m)}(x_{n+1})}.
\end{equation}
This measure now lives on the real line $\mathbb{R}$ and admits a proper distribution function. Intuitively, the tail masses that were assigned to $\pm\infty$ in \eqref{cps-measure} are redistributed to the smallest and largest conformal atoms $C_{(1)}$ and $C_{(n-m)}$, respectively. The randomized predictive distribution associated with the tail-corrected measure is then given by
\begin{equation}
    \tilde{Q}^{CPD}(\cdot, (x_{n+1},y),\tau)=\tilde{\mu}_n((-\infty,y)|x_{n+1})+\tau\tilde{\mu}_n(\{y\}\vert x_{n+1}).
    \label{randomized-tailcorrected}
\end{equation}
We now show that applying this measure correction yields a predictive distribution that is approximately uniform and quantify the difference precisely. To do so, we use the concept of total variation between measures \cite{tvdistance}.

\begin{definition}
    The \textit{total variation distance} between two probability measures $\mu, \nu$ on the measurable space $(\Omega, \mathcal{F})$ is defined as
    \begin{equation}
        \mathrm{TV}(\mu, \nu)= \sup_{A \in \mathcal{F}} |\mu(A)-\nu(A)|.
        \label{standardtv}
    \end{equation}
\end{definition}

In our present context, the measures are purely atomic, so they are supported on a finite space. In this particular case, the total variation formula has a closed form, which we show in the following lemma. This lemma is a simple corollary to the Hahn--Jordan decomposition theorem \cite{hahnthm}.

\begin{lemma}
\label{lem:atomicTV}
Let $(\Omega,\mathcal F)$ be a measurable space and let
$\mu$ and $\nu$ be probability measures on $\Omega$.
Assume that there exists a finite set
$S=\{z_1,\dots,z_k\}\subset \Omega$ such that both measures are supported on $S$, i.e.\
$\mu(\Omega\setminus S)=0$ and $\nu(\Omega\setminus S)=0$.
Then
\begin{equation}
    \mathrm{TV}(\mu,\nu)
=
\frac12
\sum_{z\in S}
\left|
\mu(\{z\})-\nu(\{z\})
\right|.
\label{tv-discrete}
\end{equation}

\end{lemma}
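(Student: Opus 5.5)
The plan is to use the Hahn--Jordan idea directly: the supremum in \eqref{standardtv} is attained on the set where $\mu$ exceeds $\nu$. Throughout, write $p(z)=\mu(\{z\})$ and $q(z)=\nu(\{z\})$ for $z\in S$. Since $S$ is finite and hence measurable (assuming singletons are measurable, as they are in all cases of interest here, e.g. Borel sets of $\mathbb{R}$ or $\Bar{\mathbb{R}}$), and both measures vanish off $S$, we have for every $A\in\mathcal F$
\[
\mu(A)-\nu(A)=\sum_{z\in A\cap S}\bigl(p(z)-q(z)\bigr).
\]
This reduces everything to a finite combinatorial statement about the signed weights $d(z)=p(z)-q(z)$.

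\emph{Upper bound.} Split $S$ into $S^+=\{z\in S: d(z)>0\}$ and $S^-=S\setminus S^+$. For any $A$,
\[
\mu(A)-\nu(A)\le \sum_{z\in S^+}d(z) \quad\text{and}\quad \nu(A)-\mu(A)\le \sum_{z\in S^-}\bigl(-d(z)\bigr).
\]
Because $\mu(S)=\nu(S)=1$, we get $\sum_{z\in S}d(z)=0$. Hence the positive part and the negative part are equal, and each equals $\tfrac12\sum_{z\in S}|d(z)|$. This gives $|\mu(A)-\nu(A)|\le \tfrac12\sum_{z\in S}|d(z)|$ for every $A$.

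\emph{Attainment.} Take $A=S^+$, which is measurable as a finite union of singletons. Then $\mu(A)-\nu(A)=\sum_{z\in S^+}d(z)=\tfrac12\sum_{z\in S}|d(z)|$, so the supremum equals the bound and \eqref{tv-discrete} follows.

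The only delicate point is measurability of the singletons and of $S^+$, which is needed to evaluate $\mu(\{z\})$ at all. I will state it as implicit in the hypothesis, or note that $S$ lies in the $\sigma$-algebra. The rest is routine bookkeeping.
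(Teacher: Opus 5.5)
Your proof is correct and is essentially the paper's approach: the paper only remarks that the lemma is a simple corollary of the Hahn--Jordan decomposition, and your set $S^+$ is exactly a Hahn positive set for the signed measure $\mu-\nu$, with the identity $\sum_{z\in S}(p(z)-q(z))=0$ playing the role of the equality of the positive and negative Jordan parts. You also handle measurability correctly: it is implicit in the statement, since the values $\mu(\{z\})$ for $z\in S$ must be defined.
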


\begin{theorem}
\label{distance-tailcorrected}
Let $Z_1, \dots, Z_n, Z_{n+1}=(X_{n+1}, Y_{n+1})$ be an exchangeable sequence, and $\tau\sim\textnormal{Unif}[0, 1]$ independent. Suppose a monotonic and balanced conformity measure and almost surely distinct atoms.
Let $\mu_n(\cdot \mid x_{n+1})$ be defined as in \eqref{cps-measure} and its associated tail-corrected measure $\tilde{\mu}_n(\cdot \mid~ x_{n+1})$ as in \eqref{tailcorrectedmeasure}. The associated randomized predictive distributions are constructed as in \eqref{randomized-standard-measure} and \eqref{randomized-tailcorrected}, respectively.
\par If $n-m \rightarrow \infty$ as $n \rightarrow \infty$, then  $\tilde{Q}^{CPD}(\cdot, (X_{n+1}, Y_{n+1}), \tau)$ is an asymptotic randomized predictive system. Furthermore, we have the bound
\[
\sup_{u\in[0,1]}
\left|
\prob(\tilde{Q}^{CPD}(\cdot, (X_{n+1}, Y_{n+1}), \tau)\le u) - u
\right|
\le
\frac{1}{n-m+1}.
\]
\end{theorem}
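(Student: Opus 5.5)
Write $N=n-m$, $Q:=Q^{CPD}(\cdot,(X_{n+1},Y_{n+1}),\tau)$ and $\tilde Q:=\tilde{Q}^{CPD}(\cdot,(X_{n+1},Y_{n+1}),\tau)$. The plan is to compare $\tilde Q$ with $Q$ directly. By Theorem~\ref{validitythm}, $Q$ is exactly $\mathrm{Unif}[0,1]$. So it suffices to show $|\tilde Q-Q|\le 1/(N+1)$ pointwise, or at least to control the event on which they differ.

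\textbf{Step 1: pointwise comparison.} Fix a realization of the atoms, which are a.s.\ distinct, and of $\tau$. Evaluate both formulas, \eqref{randomized-standard-measure} and \eqref{randomized-tailcorrected}, on the intervals $(C_{(i)},C_{(i+1)})$ and at the atoms.

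For $2\le i\le N-1$, and on $(C_{(1)},C_{(N)})$ away from the extreme atoms, the cumulative masses agree. The mass $\tau/(N+1)$ formerly at $-\infty$ has simply moved to $C_{(1)}$, which lies below $y$, so it is still counted in $\tilde\mu_n((-\infty,y))$. Hence $\tilde Q(y,\tau)=Q(y,\tau)$ for all $y\in(C_{(1)},C_{(N)})$.

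Differences arise only in three regions:
\begin{itemize}
\item for $y<C_{(1)}$: $\tilde Q=0$ while $Q=\tau/(N+1)$;
\item for $y>C_{(N)}$: $\tilde Q=1$ while $Q=(N+\tau)/(N+1)$;
\item at $y=C_{(1)}$ and $y=C_{(N)}$, where both differ by at most $1/(N+1)$ (I will check these with a short computation).
\end{itemize}
In every case $|\tilde Q-Q|\le 1/(N+1)$.

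\textbf{Step 2: convert to a distributional bound.} Let $D$ be the event that $Y_{n+1}\notin(C_{(1)},C_{(N)})$. Off $D$ we have $\tilde Q=Q$. On $D$, $\tilde Q$ is shifted monotonically toward $0$ (below) or $1$ (above) by at most $1/(N+1)$.

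This yields the two inclusions
\[
\{Q\le u-\tfrac{1}{N+1}\}\subseteq\{\tilde Q\le u\}\subseteq\{Q\le u+\tfrac{1}{N+1}\}.
\]
Uniformity of $Q$ then gives the stated bound, with $u\pm 1/(N+1)$ clipped to $[0,1]$.

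A sharper route, which I will try if the crude argument loses a factor, uses exchangeability. It gives $\prob(D)=2/(N+1)$, and on each tail $Q$ takes values only in an interval of length $1/(N+1)$. That lets me compute $\prob(\tilde Q\le u)$ exactly as a piecewise-linear function and read off the supremum deviation as $1/(N+1)$.

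\textbf{Step 3: asymptotic RPS.} Monotonicity holds because $\tilde\mu_n$ is a positive measure, so $\tilde Q$ is nondecreasing in $y$ and in $\tau$. Boundary stability holds because $\tilde\mu_n$ is a probability measure on $\mathbb{R}$. Uniform convergence of the CDF as $N\to\infty$ gives convergence in distribution to $\mathrm{Unif}[0,1]$.

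\textbf{Main obstacle.} The delicate part is the bookkeeping at the extreme atoms $y=C_{(1)}$ and $y=C_{(N)}$, where the $\tau$-weighted atom terms of \eqref{randomized-tailcorrected} interact with the redistributed masses $(1+\tau)/(N+1)$ and $(2-\tau)/(N+1)$. I will first attempt a direct case computation there. If the difference can exceed $1/(N+1)$ at those points, that event has probability zero, since $Y_{n+1}$ a.s.\ does not equal an atom given distinct scores, so it does not affect the bound.
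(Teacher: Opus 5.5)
Your proposal is correct, and its skeleton is the paper's: a pointwise bound $|\tilde Q-Q|\le 1/(N+1)$, then the two inclusions, then exact uniformity of $Q^{CPD}$. The genuine difference is how you obtain the pointwise bound.

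The paper computes $\mathrm{TV}(\mu_n,\tilde\mu_n)=1/(N+1)$ with Lemma~\ref{lem:atomicTV}. It then transfers this to $\bigl|\int f_{y,\tau}\,d(\mu_n-\tilde\mu_n)\bigr|$ for the test function $f_{y,\tau}\in[0,1]$ by a layer-cake argument. That route is case-free and would apply verbatim to any other perturbation of the measure, but it does not show where the two functions differ.

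Your case-by-case evaluation is more elementary and more informative:
\begin{itemize}
\item $\tilde Q=Q$ on $(C_{(1)},C_{(N)})$;
\item $Q-\tilde Q=\tau/(N+1)$ for $y<C_{(1)}$;
\item $\tilde Q-Q=(1-\tau)/(N+1)$ for $y>C_{(N)}$;
\item at the extreme atoms the gap is exactly $\tau(1-\tau)/(N+1)$. At $C_{(1)}$ compare $2\tau/(N+1)$ with $\tau(1+\tau)/(N+1)$; at $C_{(N)}$ compare $(N-1+2\tau)/(N+1)$ with $(N-1+3\tau-\tau^2)/(N+1)$.
\end{itemize}
So the fallback in your last paragraph is never needed. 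That is fortunate, since it would require the test score to be a.s.\ distinct from the calibration scores, which is slightly more than the stated assumption of distinct atoms.

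Your exact-distribution route also yields something the paper does not record. Under a.s.\ distinct scores, $\tilde Q$ has atoms of mass $1/(N+1)$ at $0$ and at $1$, and density $1$ on $[1/(N+1),N/(N+1)]$. Hence $\prob(\tilde Q\le 0)=1/(N+1)$, so the bound is attained and is sharp.

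One justification in Step~3 needs repair. Positivity of $\tilde\mu_n$ gives monotonicity in $y$ for fixed $\tau$, but not monotonicity in $\tau$, because $\tilde\mu_n$ itself depends on $\tau$. In particular, its mass $(2-\tau)/(N+1)$ at $C_{(N)}$ decreases in $\tau$. Verify monotonicity in $\tau$ from the explicit values instead. Away from the extreme atoms, $\tilde Q$ is affine in $\tau$ with nonnegative slope. At $C_{(1)}$ and $C_{(N)}$, the values $\tau(1+\tau)/(N+1)$ and $(N-1+3\tau-\tau^2)/(N+1)$ are nondecreasing on $[0,1]$.
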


\begin{proof}
We first embed $\tilde{\mu}_n(\cdot\mid x_{n+1})$ into $\overline{\mathbb{R}}$
by assigning zero mass to $\pm\infty$, so both measures live on the
same measurable space. Since both measures are purely atomic, their total variation distance
is given by
\[
\mathrm{TV}(\mu_n(\cdot\mid x_{n+1}),\tilde{\mu}_n(\cdot\mid x_{n+1}))
=
\frac12
\sum_{z\in\overline{\mathbb{R}}}
\big|
\mu_n(\{z\}\mid x_{n+1})
-
\tilde{\mu}_n(\{z\}\mid x_{n+1})
\big|,
\]
in view of Lemma~\ref{lem:atomicTV}. The two measures differ only at four atoms: $-\infty$, $+\infty$, $C_{(1)}(x_{n+1})$, $C_{(n-m)}(x_{n+1}).$ A direct computation shows
\begin{align*}
   \big|
\mu_n(\{-\infty\}\mid x_{n+1})
-
\tilde{\mu}_n(\{-\infty\}\mid x_{n+1})
\big|&= \left|\frac{\tau}{n-m+1} - 0\right|
=
\frac{\tau}{n-m+1},\\
   \big|
\mu_n(\{C_{(1)}\}\mid x_{n+1})
-
\tilde{\mu}_n(\{C_{(1)}\}\mid x_{n+1})
\big|&=\left|
\frac{1}{n-m+1}
-
\frac{1+\tau}{n-m+1}
\right|
=
\frac{\tau}{n-m+1},\\
   \big|
\mu_n(\{C_{(n-m)}\}\mid x_{n+1})
-
\tilde{\mu}_n(\{C_{(n-m)}\}\mid x_{n+1})
\big|&=\left|
\frac{1}{n-m+1}
-
\frac{2-\tau}{n-m+1}
\right|
=
\frac{1-\tau}{n-m+1},\\
   \big|
\mu_n(\{+\infty\}\mid x_{n+1})
-
\tilde{\mu}_n(\{+\infty\}\mid x_{n+1})
\big|&=\left|\frac{1-\tau}{n-m+1} - 0\right|
=
\frac{1-\tau}{n-m+1}.
\end{align*}

Therefore,
\begin{equation*}
\mathrm{TV}(\mu_n(\cdot\mid x_{n+1}),\tilde{\mu}_n(\cdot\mid x_{n+1}))
=
\frac{1}{n-m+1}  \quad \text{for all }  x_{n+1} \in \mathbb{R}^{p}.
\end{equation*}

As a function of the random variable $X_{n+1}$, the above equality holds for every realization $x_{n+1}$ of $X_{n+1}$. Therefore, the equality also holds in the almost sure sense:

\begin{equation}
\label{eq:tvbound}
\mathrm{TV}(\mu_n(\cdot\mid X_{n+1}),\tilde{\mu}_n(\cdot\mid X_{n+1})) = \frac{1}{n-m+1} \quad \text{a.s.}
\end{equation}

Now consider the randomized predictive systems of~\eqref{randomized-standard-measure}~and~\eqref{randomized-tailcorrected}. For each $y$, define the function
\begin{equation}
    f_{y,\tau}(z)=\ind_{\{z<y\}}+\tau\,\ind_{\{z=y\}}.
\end{equation}
Then we can write
\begin{equation}
    Q^{CPD}(\cdot, (X_{n+1},y), \tau)=\int f_{y,\tau} \,d\mu_n, \quad \tilde{Q}^{CPD} (\cdot, (X_{n+1},y), \tau) =\int f_{y,\tau} d\tilde{\mu}_n.
\end{equation}

Furthermore, by construction we have $|f_{y,\tau}| \leq 1$ a.e., since it can only take values $0, \tau$, and $1$.  Then, using the layer cake representation formula and Jensen's inequality, we get
\begin{equation}
    \begin{split}
        | Q^{CPD}(\cdot, (X_{n+1}, y),\tau)-\tilde{Q}^{CPD}(\cdot,(X_{n+1},y),\tau)|
        &=\left|\int f_{y,\tau} \,d(\mu_n-\tilde{\mu}_n)\right|\\
        &=\left|\int_0^1 \mu_n(\{f>t\}) -\tilde{\mu}_n(\{f>t\}) \,dt\right|\\
        & \leq \int_0^1 \mathrm{TV}(\mu_n, \tilde{\mu}_n)\,dt\\
        &=\frac{1}{n-m+1}.
    \end{split}
    \label{inequalityinq}
\end{equation}
Let $u \in [0,1]$. Then, in view of the above inequality,
\begin{align*}
   &\{\tilde{Q}^{CPD}(\cdot, (X_{n+1}, Y_{n+1}), \tau) \leq u\}  \subseteq \left\{ Q^{CPD}(\cdot, (X_{n+1}, Y_{n+1}),\tau) \leq u+\frac{1}{n-m+1}\right\}, \\
    &\left\{Q^{CPD}(\cdot, (X_{n+1}, Y_{n+1}),\tau) \leq u-\frac{1}{n-m+1}\right\}  \subseteq \left\{ \tilde{Q}^{CPD}(\cdot, (X_{n+1}, Y_{n+1}), \tau) \leq u\right\}.
\end{align*}
Upon taking probabilities and using the fact that $Q^{CPD}(\cdot, (X_{n+1}, Y_{n+1}),\tau) \sim \mathrm{Unif}[0,1]$, we obtain
\[
\max \left\{0, u-\frac{1}{n-m+1} \right\} \leq \prob(\tilde{Q}^{CPD}(\cdot, (X_{n+1}, Y_{n+1}),\tau)\leq u) \leq \min \left\{1, u+\frac{1}{n-m+1} \right\}.
\]
This inequality holds for all $u \in [0,1]$. Therefore, we have the bound
\[
\sup_{u\in[0,1]}
\left|
\prob(\tilde{Q}^{CPD}(\cdot, (X_{n+1}, Y_{n+1}),\tau)\le u) - u
\right|
\le
\frac{1}{n-m+1},
\]
which immediately gives that $\tilde{Q}^{CPD}(\cdot, (X_{n+1}, Y_{n+1}), \tau)$ converges in distribution to $\mathrm{Unif}[0,1]$ as $n \rightarrow \infty$. The monotonicity of $\tilde{Q}^{CPD}$ follows from that of $Q^{CPD}$ together with the monotonicity of the redistribution of tail mass. Boundary stability holds by construction since $\tilde{\mu}_n$ places no mass at $\pm\infty$. Therefore, $\tilde{Q}^{CPD}$ is an asymptotic randomized predictive system in the sense of Definition~\ref{asymptoticRPS}.
\end{proof}
The randomized predictive distribution induced by the tail-corrected measure~\eqref{tailcorrectedmeasure} is \begin{equation} \label{tail-corrected-cpd-piecewise} \tilde{Q}^{CPD}_n(y,\tau) = \begin{cases} 0, & y<C_{(1)},\\[6pt] \dfrac{\tau(1+\tau)}{n-m+1}, & y=C_{(1)},\\[10pt] \dfrac{i+\tau}{n-m+1}, & y\in(C_{(i)},C_{(i+1)}), \quad i=1,\dots,n-m-1,\\[10pt] \dfrac{i-1+2\tau}{n-m+1}, & y=C_{(i)}, \quad i=2,\dots,n-m-1,\\[10pt] \dfrac{n-m-1+\tau+\tau(2-\tau)}{n-m+1}, & y=C_{(n-m)},\\[10pt] 1 & y>C_{(n-m)}. \end{cases} \end{equation}

Finally, one can extend the theoretical coverage guarantees of tail-corrected conformal predictive distributions to $\varepsilon$-close, continuous approximations thereof. The following result serves as the theoretical basis of the optimal filtering approach presented in Section~\ref{sec:optimal_filtering}. 

\begin{theorem}\label{thm:approximation_pit_guarantee}\sloppy
    Let the assumptions of Theorem~\ref{distance-tailcorrected} hold. Let $\widehat Q^{CPD}(\cdot, (X_{n+1}, Y_{n+1}), \tau)$ be a continuous approximation of $\tilde{Q}^{CPD}(\cdot, (X_{n+1}, Y_{n+1}), \tau)$ as defined in~\eqref{randomized-tailcorrected}. Suppose that for any $y\in \R$, we have
    \begin{align}\label{eq:epsilon_approximation}
        \left|\widehat Q^{CPD}(\cdot, (X_{n+1}, y), \tau) - \tilde{Q}^{CPD}(\cdot, (X_{n+1}, y), \tau)\right|\leq \varepsilon,
    \end{align}
    almost surely,
    for $\varepsilon>0$. Then the following bound holds
    \begin{align}\label{eq:approximation_pit_guarantee}
    \sup_{u\in[0,1]}
    \left|
    \prob(\widehat Q^{CPD}(\cdot, (X_{n+1}, Y_{n+1}), \tau)\le u) - u
    \right|
    \le
    \varepsilon + \frac{1}{n-m+1}.
    \end{align}
\end{theorem}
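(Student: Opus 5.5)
The argument mirrors the sandwich step at the end of the proof of Theorem~\ref{distance-tailcorrected}, with the approximation error added to the tail-correction error. Abbreviate $\widehat Q := \widehat Q^{CPD}(\cdot,(X_{n+1},Y_{n+1}),\tau)$, $\tilde Q := \tilde Q^{CPD}(\cdot,(X_{n+1},Y_{n+1}),\tau)$ and $Q := Q^{CPD}(\cdot,(X_{n+1},Y_{n+1}),\tau)$.

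\emph{Step 1: evaluate at the random label.} The hypothesis \eqref{eq:epsilon_approximation} holds almost surely for every deterministic $y\in\R$, but we need it at $y=Y_{n+1}$. The cleanest reading is that the exceptional null set does not depend on $y$, i.e.\ almost surely the bound holds simultaneously for all $y$. On that event we may substitute $y=Y_{n+1}$ and obtain $|\widehat Q-\tilde Q|\le\varepsilon$ almost surely.

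If the hypothesis is only pointwise in $y$, I would argue as follows. Almost surely, $\widehat Q^{CPD}$ is continuous in $y$. The function $\tilde Q^{CPD}$ is piecewise constant with finitely many jumps, at the atoms, and is right- or left-continuous off the atoms. A bound valid on a countable dense set of $y$ therefore extends to all $y$ outside the atoms. The event $\{Y_{n+1}\in\{C_{(i)}\}\}$ has to be handled separately, using the distinct-scores assumption. This measurability point is the only delicate step. I expect the intended reading to be the uniform one, and I would state it explicitly.

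\emph{Step 2: combine with the tail-correction bound.} From \eqref{inequalityinq} in the proof of Theorem~\ref{distance-tailcorrected}, $|Q-\tilde Q|\le \frac{1}{n-m+1}$ almost surely. The triangle inequality then gives
\[
|\widehat Q - Q|\le \varepsilon+\tfrac{1}{n-m+1}=:\delta \quad\text{a.s.}
\]

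\emph{Step 3: sandwich.} For $u\in[0,1]$, the bound from Step~2 yields, up to null sets,
\[
\{Q\le u-\delta\}\subseteq\{\widehat Q\le u\}\subseteq\{Q\le u+\delta\}.
\]
By Theorem~\ref{validitythm}, $Q\sim\mathrm{Unif}[0,1]$. Taking probabilities gives
\[
\max\{0,u-\delta\}\le \prob(\widehat Q\le u)\le\min\{1,u+\delta\}.
\]
Hence $|\prob(\widehat Q\le u)-u|\le\delta$ for every $u$, and taking the supremum over $u$ gives \eqref{eq:approximation_pit_guarantee}.

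Alternatively, one can sandwich $\widehat Q$ against $\tilde Q$ using the bound of Theorem~\ref{distance-tailcorrected} directly: the same inclusions hold with $\tilde Q$ in place of $Q$ and $\varepsilon$ in place of $\delta$. This works because that theorem controls the CDF of $\tilde Q$ uniformly, with $\prob(\tilde Q\le u\pm\varepsilon)$ within $\frac{1}{n-m+1}$ of $u\pm\varepsilon$, so the two errors add to the same total.
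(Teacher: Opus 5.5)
Your proposal is correct and is essentially the paper's proof: the paper likewise combines \eqref{inequalityinq} with \eqref{eq:epsilon_approximation} via the triangle inequality to get $|\widehat Q^{CPD}-Q^{CPD}|\le\varepsilon+\frac{1}{n-m+1}$, then runs the same two-sided inclusion argument against $Q^{CPD}\sim\mathrm{Unif}[0,1]$. Your Step~1 adds care that the paper skips (the paper simply substitutes $Y_{n+1}$ for $y$), and your continuity argument settles it. It also covers the atoms without the distinct-scores assumption: at an atom, $\tilde Q^{CPD}$ takes a value between its one-sided limits, and the continuous $\widehat Q^{CPD}$ is within $\varepsilon$ of both limits, hence within $\varepsilon$ of that value.
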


\begin{proof}
    Combining \eqref{inequalityinq} and \eqref{eq:epsilon_approximation} with the triangle inequality, we collect
    \begin{align}
        \left|\widehat Q^{CPD}(\cdot, (X_{n+1}, Y_{n+1}), \tau) - Q^{CPD}(\cdot, (X_{n+1}, Y_{n+1}), \tau)\right|\leq \begin{aligned}
            \varepsilon + \frac{1}{n-m+1}.
        \end{aligned}
    \end{align}
    Therefore, the following inclusions hold for any $u\in [0,1]$
    \begin{align*}
        &\left\{\widehat Q^{CPD}(\cdot, (X_{n+1}, Y_{n+1}), \tau)\leq u\right\}\subseteq \left\{ Q^{CPD}(\cdot, (X_{n+1}, Y_{n+1}), \tau)\leq u  + \varepsilon + \frac{1}{n-m+1}\right\},\\
        &\left\{Q^{CPD}(\cdot, (X_{n+1}, Y_{n+1}), \tau)\leq u  - \varepsilon - \frac{1}{n-m+1}\right\}\subseteq \left\{\widehat Q^{CPD}(\cdot, (X_{n+1}, Y_{n+1}), \tau)\leq u\right\}.
    \end{align*}
    Upon taking probabilities and, once again, using the fact that $Q^{CPD}(\cdot, (X_{n+1}, Y_{n+1}), \tau)\sim \textnormal{Unif}[0, 1]$, we obtain
    \begin{align*}
    \begin{aligned}[t]
        \max\left\{0, u-\varepsilon-\frac{1}{n-m+1}\right\}\leq \mathbb{P}(\widehat Q^{CPD}(\cdot, &(X_{n+1}, Y_{n+1}), \tau)\leq u)\\&\leq \min\left\{1, u+\varepsilon+\frac{1}{n-m+1}\right\}.
    \end{aligned}
    \end{align*}
    As this inequality holds for any $u\in [0,1]$ we recover the claim in \eqref{eq:approximation_pit_guarantee}.
\end{proof}
\section{Quantile matching: retrieving predictive densities}\label{sec:qm}
Conformal predictive distributions with their tail correction in Section~\ref{sec:measure_correction} produce a predictive distribution function supported on $\R$, with a bound on their deviation from the uniform distribution in the PIT. By their construction, CPDs are step functions supported on conformal atoms, and therefore, their derivative related to the associated density is a sum of Dirac masses and is thus ill-defined. The most straightforward approximation of the associated density is to use a forward finite difference scheme in between consecutive conformal atoms
\begin{align}
    \hat{f}(\cdot, (x,C_{(i)}))&=\frac{\tilde{Q}^{CPD}(\cdot, (x,C_{(i+1)}),\tau)-\tilde{Q}^{CPD}(\cdot,(x, C_{(i)}),\tau)}{C_{(i+1)}-C_{(i)}}=\frac{1}{(n-m+1)(C_{(i+1)}-C_{(i)})}.
    \label{direct-fd-pdf}
\end{align}
However, this simple approach leads to very noisy densities due to the large number of conformal atoms involved, and is very sensitive to small changes in between adjacent conformal atoms. 

In what follows, we propose a new approach called \emph{quantile matching} to decrease the amount of noise in the associated density approximations. This method merges the two worlds of conformal prediction intervals with conformal predictive distributions in a principled way: to reduce noise, a number of quantile levels is preselected and the associated conformal quantiles are computed. As we shall see later on, this essentially amounts to selecting a subset of the conformal atoms $C_{(i)}$. In order to recover predictive densities, we present two complementary approaches: finite differencing on this subset of conformal atoms leading to less noisy approximations (see \eqref{QM-pdf} below), and optimal kernel smoothing in Section~\ref{sec:optimal_filtering} acting on the piecewise constant predictive distribution function. With the new approach, we are able to both reduce the noise of the predictive density and maintain an upper bound on the perturbation in PIT of the associated predictive distribution.

The idea of building conformal estimates based on ordered quantiles has appeared in the literature before. Three recent works deserve an in-depth comparison. \cite{izbicki_flexible_2020} propose Dist-split, which calibrates the estimated PIT and returns a prediction interval based on it. \cite{gupta_nested_2022} replace the conformity score by a family of nested sets and calibrate them to find the minimal, valid prediction set. Finally, \cite{chernozhukov_distributional_2021} take a conditional distribution function as an input and compute an associated conformity score. The main differences between these approaches and quantile matching are as follows. First, quantile matching takes a single, scalar score as input instead of a conditional distribution \citep{chernozhukov_distributional_2021} or density \citep{izbicki_flexible_2020}. Second, the theoretical guarantee for quantile matching (see Theorem~\ref{qm-theorem} below) bounds the deviation from uniformity in PIT over finite samples for the whole \emph{distribution function}, whereas the aforementioned papers establish a coverage guarantee of a set for a single level. Finally, the main goal of quantile matching is the recovery of a conformal predictive density, which has not been addressed in the earlier works. The conditional density estimator in \cite{izbicki_flexible_2020}, FlexCode \citep{izbicki_converting_2017}, is a non-conformal input that shapes the prediction set without any associated theoretical guarantees.
\color{black}
In the previous sections, we have seen the notion of a randomized predictive system as the ''gold standard'' for constructing conformal predictive distributions. However, such constructions do not create a true conditional CDF, as their underlying measure lives on the extended real line $\Bar{\mathbb{R}}$ and gives weight at $\pm \infty$. Furthermore, they rely on an extra uniform random variable $\tau$, which introduces fuzziness and makes direct finite-differencing noisy.

\par We now propose a hybrid approach that removes the randomization entirely. We match a set of quantile levels with the output of one-sided conformal prediction intervals (Section~3) and recompute the predictive distribution for this subset of quantile levels (Section~4). This new framework, which we call the \textit{quantile-matching} method, directly constructs predictive distributions by computing conformal prediction quantiles. The main advantage of this method is that the user has additional freedom in choosing both the number and the locations of the quantile levels, which controls the resolution of the density estimate. Although we focus on evenly spaced quantile levels throughout this paper, the construction naturally extends to non-uniform grids, allowing additional
resolution in regions of practical interest such as extreme tails. Choosing a small number of quantile levels removes noisy patterns, while retaining a principled worst-case upper bound in terms of the distance from the true uniform distribution.

Let us first define predictive systems, following the framework presented in \cite[Section~2]{shen}. Predictive systems exclude randomization, removing one layer of complexity of randomized predictive systems while imposing similar regularity conditions and retaining a virtually unchanged validity guarantee. The advantage is that they construct predictive distributions on the real line. Excluding randomization makes them unsuitable for handling ties in the underlying data, but this does not act as an obstacle in our case under the standard assumption that the conformity scores are almost surely distinct.
\begin{definition}[Predictive system]
    \label{predictivesystem}
    A right-continuous function $Q:(\mathbb{R}^{p+1})^{n+1} \rightarrow [0,1]$ is called a \textit{predictive system} if it satisfies: 
    \begin{enumerate}
        \item \textit{Monotonicity:} For any fixed training sequence $(z_1, \dots, z_n) \in (\mathbb{R}^{p+1})^{n}$ and test feature $x_{n+1} \in \mathbb{R}^p$, the function $Q(z_1, \dots, z_n, (x_{n+1},y))$ is non-decreasing in $y$.
        \item \textit{Boundary stability:} For all $(z_1, \dots, z_n) \in (\mathbb{R}^{p+1})^n$ and $x_{n+1} \in \mathbb{R}^p$,
        \begin{equation*}
        \lim_{y \to -\infty} Q(z_1, \dots, z_n, (x_{n+1}, y)) = 0 \quad \text{and} \quad \lim_{y \to \infty} Q(z_1, \dots, z_n, (x_{n+1}, y)) = 1.
    \end{equation*}
    \item \textit{Validity:} For any exchangeable sequence of random variables $Z_1, \dots, Z_{n+1}$ taking values in $\mathbb{R}^{p+1}$, the random variable $Q(Z_1, \dots, Z_{n+1})$ follows the uniform distribution on $[0,1]$, i.e.,
    \begin{equation}
        \prob(Q(Z_1, \dots, Z_{n+1}) \leq \alpha)=\alpha \quad \text{for all } \alpha \in [0,1].
        \label{psvalidity}
    \end{equation}
    \end{enumerate}
    It is called an \textit{asymptotic predictive system} if \eqref{psvalidity} holds asymptotically as $n \rightarrow \infty$, i.e., the random variable $Q(Z_1, \dots, Z_{n+1})$ converges in distribution to the uniform distribution on $[0,1]$ as $n \rightarrow \infty$.
\end{definition}
\begin{definition}[Predictive distribution]
    The output of an (asymptotic) predictive system $Q$ is called an (asymptotic) \textit{predictive distribution} and is defined as the function
    \[
    Q_n:y \in \mathbb{R} \mapsto Q(z_1, \dots, z_n, (x_{n+1}, y)).
    \]
\end{definition}
\begin{remark}
    Observe that now, by construction, $Q_n$ is a true cumulative distribution function.
\end{remark}

We now describe an original construction of an asymptotic predictive system, starting from conformal prediction intervals. To align with the notation of Section~4, we assume again that $\mathcal{I}_2=\{m+1, \dots, n\}$, so that $|\mathcal{I}_2|=n-m$.

\par The main idea is to slightly modify the previously used conformity scores to obtain one-sided prediction intervals---approximate prediction quantiles---which can be recomputed over a fine grid of quantile levels. The following proposition formalizes this notion; it is a direct consequence of Theorem~\ref{cpmarginalvalidity}.

\begin{proposition}
    \label{propositionguarantee}
    For the standard one-sided conformity measure~\eqref{modified-cp-score}, the prediction interval created using its associated conformity scores is of the type $(-\infty, y_{X_{n+1}}^\alpha]$ and 
   \[
   \prob\left(Y_{n+1} \leq y_{X_{n+1}}^\alpha\right) \in \left[\alpha, \alpha+\frac{1}{n-m+1} \right].
   \]
\end{proposition}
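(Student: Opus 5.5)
The plan is to make the one-sided construction explicit and then reduce the statement to Lemma~\ref{lemma2quantiles}, in the same way as the proof of Theorem~\ref{cpmarginalvalidity}. With the conformity measure \eqref{modified-cp-score}, the calibration scores are $E_i = Y_{m+i}-\hat{y}(X_{m+i})$ for $i=1,\dots,n-m$, and the test score is $E_{n+1}=Y_{n+1}-\hat{y}(X_{n+1})$. For a level $\alpha$, set $\alpha' := \alpha(1+\frac{1}{n-m})$, which is the one-sided analogue of the adjustment $1-\tilde{\alpha}$ in Section~3. Define the upper endpoint
\[
y^\alpha_{X_{n+1}} := \hat{y}(X_{n+1}) + \hat{q}_{\alpha',n-m}(\mathcal{E}),
\]
so the prediction interval is $(-\infty, y^\alpha_{X_{n+1}}]$. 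By \eqref{empiricalquantile} and \eqref{fromscorestoatoms}, this endpoint is the atom $C_{(\lceil (n-m)\alpha'\rceil)}$. Two conventions are needed: if $\alpha'>1$ we take the quantile to be $+\infty$, and the order statistic is well defined for $\alpha' \le 1$.

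The key equivalence is a one-line rearrangement:
\[
Y_{n+1}\le y^\alpha_{X_{n+1}} \iff E_{n+1}\le \hat{q}_{\alpha',n-m}(\mathcal{E}).
\]
Since $\hat{y}$ is fitted only on $\mathcal{I}_1$, the scores $E_1,\dots,E_{n-m},E_{n+1}$ are a fixed measurable function of the exchangeable pairs $Z_{m+1},\dots,Z_{n+1}$, conditionally on the training set. They are therefore exchangeable, conditionally on $\mathcal{I}_1$ and hence also unconditionally.

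Next, apply Lemma~\ref{lemma2quantiles} with sample size $n-m$ and level $\alpha$. This gives $\prob(E_{n+1}\le \hat{q}_{\alpha(1+1/(n-m)),n-m})\ge\alpha$. Under the standing assumption that the scores are almost surely distinct, the same lemma gives the upper bound $\alpha+\frac{1}{n-m+1}$. If conditioning on the training set was used, integrate over it to obtain the unconditional statement. Together these yield $\prob(Y_{n+1}\le y^\alpha_{X_{n+1}})\in[\alpha,\alpha+\frac{1}{n-m+1}]$.

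The main obstacle is mostly bookkeeping rather than substance. First, the adjusted level must be stated in the non-absolute, one-sided form, since the adjustment in Section~3 was written for $1-\alpha$ with absolute residuals. Second, the edge case $\alpha'>1$ must be handled: there the endpoint is $+\infty$, the lower bound holds trivially, and the upper bound holds because $\alpha > \frac{n-m}{n-m+1}$ forces $\alpha + \frac{1}{n-m+1} > 1$. Third, the exchangeability of the split scores should be justified by conditioning on the training data.
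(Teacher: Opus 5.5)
Your proposal is correct and follows the paper's route. The paper treats this proposition as a direct consequence of Theorem~\ref{cpmarginalvalidity}, whose proof is exactly your rearrangement $Y_{n+1}\le y^\alpha_{X_{n+1}} \iff E_{n+1}\le \hat{q}$ followed by Lemma~\ref{lemma2quantiles} applied to the $n-m$ calibration scores together with the test score, and your adjusted level $\alpha(1+1/(n-m))$ is the paper's $\tilde{\alpha}$ in \eqref{cp-quantile}; your treatment of the split-score exchangeability and of the edge case $\alpha(1+1/(n-m))>1$ makes rigorous what the paper leaves implicit.
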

\begin{remark}
    We can explicitly give the formula for $y_{X_{n+1}}^{\alpha}$ as follows:
\begin{align}
y_{X_{n+1}}^{\alpha}
&= \hat{y}(X_{n+1}) + \hat{q}_{\tilde{\alpha},|\mathcal{I}_2|}(\mathcal{E})=\hat{q}_{\tilde{\alpha},|\mathcal{I}_2|}(\mathcal{C}),
\label{cp-quantile}
\end{align}
    where the notation is aligned with Algorithm~\ref{cp-method}, $\mathcal{C}:=\left\{C_{(1)},...,C_{(n-m)}\right\}$ and $\tilde{\alpha}$ is the empirical adjusted quantile $\alpha \left(1+ 1/|\mathcal{I}_2|\right)$.
\end{remark}

Now suppose that we introduce an evenly-spaced grid $(\alpha_i)_{i=0}^K$ on $[0,1]$, hence $\alpha_i=i / K$ for $i=0,1,\dots,K$, with the convention that $y_{X_{n+1}}^{\alpha_0}=-\infty$, as well as $y_{X_{n+1}}^{\alpha_K}=+\infty$. Let us also define the empirical adjusted grid level with $\tilde{\alpha}_i:=\alpha_i \left(1+ 1/|\mathcal{I}_2|\right)$. Furthermore, to avoid overlap between predicted quantiles, we impose the strict condition $K<n-m+1$, in view of Proposition~\ref{propositionguarantee}. 

\par We can recompute the remaining predicted quantiles $(y_{X_{n+1}}^{\alpha_i})_{i=1}^{K-1}$. Then, for each interior grid level $i=1,\dots,K-1$, Proposition~\ref{propositionguarantee} gives \[ \prob\left( Y_{n+1}<y_{X_{n+1}}^{\alpha_i} \right) \in \left[ \alpha_i, \alpha_i+\frac{1}{n-m+1} \right]. \]
Plugging in an observed test point $X_{n+1}=x_{n+1}$, we
construct a right-continuous step function whose jump locations are the
predicted quantiles $y_{x_{n+1}}^{\alpha_i}$. Specifically, the function
takes the value $\alpha_i$ on
\[
\left[
y_{x_{n+1}}^{\alpha_{i-1}},
y_{x_{n+1}}^{\alpha_i}
\right),
\qquad i=1,\dots,K,
\]
which is equivalently expressed as
\begin{equation}
Q^{QM}(z_1,\dots,z_n,(x_{n+1},y))
=
\inf\left\{
\alpha_i:y<y_{x_{n+1}}^{\alpha_i}
\right\}.
\label{nontailcorrectedqm}
\end{equation}
Note, however, that in its current form the predictive system still does not output a true CDF: for $y<y_{x_{n+1}}^{\alpha_1}$, we have $Q^{QM}_n(y)=\alpha_1=1/K>0$. To correct this, we apply the following boundary correction at $C_{(1)}$, the smallest finite conformal atom, which by construction satisfies $y_{x_{n+1}}^{\alpha_1}=\hat{q}_{\tilde{\alpha}_1,|\mathcal{I}_2|}(\mathcal{C}) \geq C_{(1)}$:

\begin{align}
\tilde{Q}^{QM}(z_1, \dots, z_n, (x_{n+1}, y))
&=
\begin{cases}
\inf \{\alpha_i : y < y_{x_{n+1}}^{\alpha_i}\}
& \text{if } y \ge C_{(1)}, \\[4pt]
0
& \text{if } y < C_{(1)}.
\end{cases}
\label{quantileaps}
\end{align}

The procedure described above leading to \eqref{quantileaps} is called \textbf{quantile-matching}.

\begin{algorithm}
\caption{Split Quantile-Matched Predictive Distribution}
\label{randomized-algorithm}
\begin{algorithmic}[0]
\State \textbf{Input:} Dataset $\{z_i=(x_i, y_i)\}_{i=1}^n$, observation (test object) $x_{n+1}.$ 
\State \textbf{Algorithm:}  Partition $\{1, \dots, n\}$ into a training set $\mathcal{I}_1=\{1, \dots, m\}$ and a calibration set $\mathcal{I}_2=\{m+1, \dots, n\}$.
\State Introduce evenly-spaced grid $(\alpha_i)_{i=0}^K$ with $\alpha_i=i/K$ for all $i=0,1,\dots,K$
        \For{$i \in \{1, \dots,K-1\}$}
            \State Compute conformal prediction quantiles $y_{x_{n+1}}^{\alpha_i}$.
        \EndFor
\State \textbf{Output:}  Return a predictive distribution for the label $y$ of $x_{n+1}$
\begin{equation}
    \label{eq:qm-final}
   \tilde{Q}^{QM}_n(y):=\begin{cases}
\inf \{\alpha_i : y < y_{x_{n+1}}^{\alpha_i}\}
& \text{if } y \ge C_{(1)}, \\[4pt]
0
& \text{if } y < C_{(1)}.
\end{cases}
\end{equation}
\end{algorithmic}
\label{qm-algorithm}
\end{algorithm}

\par The following theorem gives an upper bound on the perturbation from the uniform distribution of the approximated system in~\eqref{quantileaps}. 
\begin{theorem}
    \label{qm-theorem}
    Suppose standard conformal prediction is used to construct the sequence of quantiles $(y_{X_{n+1}}^{\alpha_i})_{i=0}^K$. If $n-m \rightarrow \infty$ and $K \rightarrow \infty$ as $n \rightarrow \infty$, then the construction in $\eqref{quantileaps}$ is an asymptotic predictive system. Furthermore, we have the bound
    \begin{equation}
                \sup_{u \in (0,1)} \left|\prob(\tilde{Q}^{QM}(Z_1, \dots, Z_n, (X_{n+1}, Y_{n+1}))\leq u) -u\right| \leq \frac{1}{K}+\frac{1}{n-m+1}.
                \label{distanceqm}
    \end{equation}
\end{theorem}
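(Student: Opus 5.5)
The plan is to show that $\tilde{Q}^{QM}$ is a step function taking values only in the grid $\{0,\alpha_1,\dots,\alpha_K\}$. Then the distribution of $\tilde{Q}^{QM}(Z_1,\dots,Z_n,(X_{n+1},Y_{n+1}))$ is determined by the probabilities of the events $\{\tilde{Q}^{QM}\le \alpha_j\}$. Each of these events I will rewrite as a one-sided conformal coverage event, so that Proposition~\ref{propositionguarantee} applies. Write $Q:=\tilde{Q}^{QM}(Z_1,\dots,Z_n,(X_{n+1},Y_{n+1}))$ and $y^{\alpha}:=y^{\alpha}_{X_{n+1}}$.

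\textbf{Step 1: structural properties.} From \eqref{cp-quantile}, $y^{\alpha_i}=\hat q_{\tilde\alpha_i,|\mathcal I_2|}(\mathcal C)=C_{(\lceil (n-m)\tilde\alpha_i\rceil)}$. Since $\tilde\alpha_i$ is increasing in $i$, this sequence is non-decreasing in $i$. The condition $K<n-m+1$ guarantees that the interior indices $\lceil (n-m)\tilde\alpha_i\rceil$ lie in $\{1,\dots,n-m\}$. Together with $y^{\alpha_0}=-\infty$ and $y^{\alpha_K}=+\infty$, this makes \eqref{quantileaps} a non-decreasing, right-continuous step function. It equals $0$ below $C_{(1)}$ and equals $1$ for $y\ge y^{\alpha_{K-1}}$. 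Monotonicity and boundary stability in Definition~\ref{predictivesystem} therefore hold by construction.

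\textbf{Step 2: identify the events.} Fix $j\in\{1,\dots,K\}$. Because the quantiles are non-decreasing, $\inf\{\alpha_i : y<y^{\alpha_i}\}\le\alpha_j$ holds if and only if $y<y^{\alpha_j}$. Since $y^{\alpha_1}\ge C_{(1)}$, the branch $y<C_{(1)}$ is contained in this event as well. Hence
\[
\{Q\le\alpha_j\}=\{Y_{n+1}<y^{\alpha_j}\}.
\]
The scores are almost surely distinct, and by \eqref{fromscorestoatoms} the event $Y_{n+1}=C_{(k)}$ for some $k$ is the same as $E_{n+1}=E_k$ for some $k$. This event has probability zero, so strict and non-strict inequalities coincide almost surely. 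Proposition~\ref{propositionguarantee} then gives
\[
\prob(Q\le\alpha_j)\in\left[\alpha_j,\ \alpha_j+\tfrac{1}{n-m+1}\right].
\]
Separately, $\prob(Q=0)=\prob(Y_{n+1}<C_{(1)})$. By exchangeability and distinctness, $E_{n+1}$ is the smallest of the $n-m+1$ scores with probability exactly $\tfrac{1}{n-m+1}$, so $\prob(Q=0)=\tfrac{1}{n-m+1}$.

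\textbf{Step 3: interpolate between grid points.} For $u\in[\alpha_j,\alpha_{j+1})$ with $j\ge1$, we have $\prob(Q\le u)=\prob(Q\le\alpha_j)$. Since $0\le u-\alpha_j<1/K$, Step 2 yields $|\prob(Q\le u)-u|\le \tfrac1K+\tfrac1{n-m+1}$. For $u\in(0,\alpha_1)$, we have $\prob(Q\le u)=\tfrac1{n-m+1}$ and $u<\tfrac1K$, so the difference is at most $\max\{u,\tfrac1{n-m+1}\}$, which is again within the bound. This proves \eqref{distanceqm}. Letting $K\to\infty$ and $n-m\to\infty$ gives convergence in distribution to $\mathrm{Unif}[0,1]$, so the construction is an asymptotic predictive system.

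\textbf{Main obstacle.} I expect the main difficulty to be Step 2. One must check carefully that the boundary correction at $C_{(1)}$ does not change the events $\{Q\le\alpha_j\}$ for $j\ge1$. One must also reconcile the strict inequality in \eqref{quantileaps} with the non-strict coverage statement of Proposition~\ref{propositionguarantee}. Both points rest on the almost-sure distinctness of the scores and on the ordering of the quantiles, which is where $K<n-m+1$ is used.
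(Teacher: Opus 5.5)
Your proposal is correct and follows essentially the same route as the paper. Both arguments identify $\{\tilde Q^{QM}\le\alpha_j\}$ with the one-sided coverage event $\{Y_{n+1}<y^{\alpha_j}_{X_{n+1}}\}$, apply Proposition~\ref{propositionguarantee} at the grid levels, interpolate between grid points, and treat $(0,\alpha_1)$ separately. Your exact computation $\prob(\tilde Q^{QM}=0)=\frac{1}{n-m+1}$ and your explicit handling of strict versus non-strict inequalities (via distinctness of $E_{n+1}$ from the calibration scores) are small refinements that justify steps the paper only asserts, notably its bound $\prob(\tilde Q^{QM}\le u)\le\alpha_1$ on $(0,\alpha_1)$, which follows from your computation together with $K<n-m+1$.
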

\begin{proof}
    Note that, by construction, the sequence $(\alpha_i)_{i=0}^K$ is increasing, and so is $( y_{x_{n+1}}^{\alpha_i})_{i=0}^K$. This follows directly from the formula $y_{x_{n+1}}^{\alpha_i}=\hat{y}(x_{n+1})+\hat{q}_{\tilde{\alpha}_i, |\mathcal{I}_2|}(\mathcal{E})$ and the fact that the empirical quantile $\hat{q}_{\tilde{\alpha}_i, |\mathcal{I}_2|}(\mathcal{E})$ is non-decreasing in $\alpha$.
    Then, $\tilde{Q}^{QM}$ is monotonic in $y$, as an increase in $y$ can only reach a higher threshold level $y_{x_{n+1}}^{\alpha_i}$. Throughout this proof, we write $\tilde{Q}^{QM}(\cdot, (X_{n+1}, Y_{n+1})):=\tilde{Q}^{QM}(Z_1,\dots,Z_n, (X_{n+1}, Y_{n+1}))$.
    \par The boundary stability condition is satisfied by construction: for $y < C_{(1)}$, we have $$\tilde{Q}^{QM}(z_1,\dots,z_n, (x_{n+1},y))=0,$$ and for $y\ge y_{x_{n+1}}^{\alpha_{K-1}}$, we have 
    $\tilde{Q}^{QM}(z_1,\dots,z_n, (x_{n+1},y))=1$.
\par It remains to check the asymptotic validity property. First, note that, for each $i=1, \dots, K$, we have
\begin{equation}
\tilde{Q}^{QM}(z_1, \dots, z_n, (x_{n+1}, y)) \le \alpha_i \text{ if and only if } y<y_{x_{n+1}}^{\alpha_i}.
\label{ifandonlyif}
\end{equation}
Indeed, if $y< y_{x_{n+1}}^{\alpha_i}$, then $\inf \{\alpha_j : y < y_{x_{n+1}}^{\alpha_j}\} \le \alpha_i$, and the left-hand side is exactly the value that $\tilde{Q}^{QM}$ takes on this branch. Vice versa, if $\tilde{Q}^{QM}(\cdot, (x_{n+1},y)) \le \alpha_i$, then $\inf \{\alpha_j : y < y_{x_{n+1}}^{\alpha_j}\} \le \alpha_i$, so $y < y_{x_{n+1}}^{\alpha_i}$---otherwise, $y\ge y_{x_{n+1}}^{\alpha_i}$ would imply that $\inf \{\alpha_j : y < y_{x_{n+1}}^{\alpha_j}\} > \alpha_i$.
Then, using Proposition~\ref{propositionguarantee},
\begin{equation}  
\prob(\tilde{Q}^{QM}(Z_1, \dots, Z_n, (X_{n+1}, Y_{n+1})) \le \alpha_i) =\prob(Y_{n+1}<y^{\alpha_i}_{x_{n+1}}) \in \left[\alpha_i,\alpha_i+\frac{1}{n-m+1}\right],
\label{upperobound1}
\end{equation}
where the central equalities follow from the fact that the conformal atoms are almost surely distinct and Proposition~\ref{propositionguarantee}.
Therefore, the validity requirement holds at $u \in \{\alpha_1, \dots, \alpha_K\}$. In fact, the statement is even stronger:
\begin{equation}
     \left|\prob(\tilde{Q}^{QM}(Z_1, \dots, Z_n, (X_{n+1}, Y_{n+1}))\leq \alpha_i) -\alpha_i\right| \leq  \frac{1}{n-m+1} \quad \text{ for all } i=1,\dots,K.
\end{equation}

\par Now fix $u$ such that $\alpha_i<u<\alpha_{i+1}$ for some $i=1, \dots, K-1$. Then,
\begin{align}
\begin{aligned}
    \prob(\tilde{Q}^{QM}(\cdot, (X_{n+1}, Y_{n+1})) \leq \alpha_i) \leq \prob(\tilde{Q}^{QM}(\cdot, &(X_{n+1}, Y_{n+1})) \leq u)\\ &\leq \prob(\tilde{Q}^{QM}(\cdot, (X_{n+1}, Y_{n+1})) \leq \alpha_{i+1}).
\end{aligned}
\end{align}
Using \eqref{upperobound1}, we obtain the bounds
\[
\alpha_i \leq \prob(\tilde{Q}^{QM}(\cdot, (X_{n+1}, Y_{n+1})) \leq u) \leq \alpha_{i+1}+\frac{1}{n-m+1}.
\]
Subtracting $u$ from both sides and using $\alpha_{i}<u<\alpha_{i+1}$:
\[
\alpha_i -\alpha_{i+1} \leq \prob(\tilde{Q}^{QM}(\cdot, (X_{n+1}, Y_{n+1})) \leq u) -u \leq \alpha_{i+1}+\frac{1}{n-m+1}-\alpha_i.
\]
Taking absolute values:
\begin{equation}
\begin{aligned}
    |\prob(\tilde{Q}^{QM}(Z_1, \dots, Z_n, (X_{n+1}, Y_{n+1})) \leq u)-u| &\leq \alpha_{i+1}-\alpha_i+\frac{1}{n-m+1}\\&=\frac{1}{K}+\frac{1}{n-m+1}.
\end{aligned}
    \label{upperbound2}
\end{equation}
Finally, fix $u$ such that $0<u<\alpha_1$. Then,
\[
0 \leq \prob(\tilde{Q}^{QM}(\cdot, (X_{n+1}, Y_{n+1})) \leq u) \leq \alpha_1,
\]
so that
\begin{equation}
    |\prob(\tilde{Q}^{QM}(\cdot, (X_{n+1}, Y_{n+1})) \leq u)-u| \leq \max \{\alpha_1-u, u\} \leq \alpha_1=\frac{1}{K}.
    \label{upperbound3}
\end{equation}
Combining \eqref{upperobound1}, \eqref{upperbound2} and \eqref{upperbound3} gives the bound 
\[
    \sup_u \left|\prob(\tilde{Q}^{QM}(Z_1, \dots, Z_n, (X_{n+1}, Y_{n+1}))\leq u) -u\right|\leq\frac{1}{K}+\frac{1}{n-m+1}.
\]
Letting $n \rightarrow \infty$ and then $K \rightarrow \infty$ gives convergence to zero, hence asymptotic validity. Since all three conditions of Definition~\ref{predictivesystem} are satisfied, $\tilde{Q}^{QM}$ is an asymptotic predictive system.
\end{proof}
\begin{remark}[Using alternative conformity scores] \label{remark:cdp} The theoretical guarantees of Theorem~\ref{qm-theorem} rely on the monotonicity of the estimated conformal quantiles $(y_{x_{n+1}}^{\alpha_i})_{i=1}^{K-1}$ with respect to the quantile level $\alpha_i$. For the standard conformity score, this property follows automatically from the monotonicity of empirical quantiles. However, when using conformity scores such as conformal direct prediction (CDP), the estimated residual quantiles are obtained from a quantile regression model and are not theoretically guaranteed to be monotone across levels $\alpha_i$. Consequently, the quantile-matching construction may fail to define a valid predictive system and the coverage guarantee of Theorem~\ref{qm-theorem} no longer applies. In practice, this issue is often negligible for CDP. Since the quantile regression is performed on residual magnitudes rather than directly on the target variable, violations of monotonicity are typically small and were not observed in our experiments. Whenever the estimated quantiles remain monotone, the quantile-matching construction can be applied without modification and can subsequently be smoothed using the methods of Section~\ref{sec:optimal_filtering}. Monotonicity can also be enforced directly during model fitting. For example, when using LightGBM, one may train a joint model on $(x,\alpha)$ while imposing monotonicity in the quantile-level variable $\alpha$, thereby ensuring that estimated quantiles remain ordered. A practical advantage of CDP is that it produces genuinely heteroscedastic predictive distributions. Unlike the standard conformity score, which effectively translates a fixed residual distribution to the point prediction, CDP allows the shape and spread of the predictive distribution to vary with the covariates. Empirically, we found this effect to be particularly beneficial for smaller calibration sets, where CDP-based quantile matching produced more realistic predictive distributions and improved tail estimates. \end{remark}

\subsection*{Relation with conformal predictive distributions}

\par First, let us introduce the \textit{crisp} modifications of randomized predictive distributions \cite[Section~5]{split-cpd}:
\begin{equation}
    Q^{\text{crisp}}(z_1, \dots, z_n, (x_{n+1},y)):= \frac{i}{n-m} \quad \text{ if } y \in [C_{(i)}, C_{(i+1)}).
    \label{crisp}
\end{equation}
The crisp modification no longer depends on $\tau$ and essentially acts as the \textit{empirical CDF} of the conformal atoms $C_{(i)}$.

\par Let us now recall the connection between conformal quantiles and conformal atoms, highlighted in ~\eqref{cp-quantile}, 
\begin{equation}
    y_{x_{n+1}}^{\alpha_i}=\hat{q}_{\tilde{\alpha}_i,|\mathcal{I}_2|}(\mathcal{C}).
    \label{cpquantilefromatoms}
\end{equation}

That is, with quantile matching we preselect a group of quantile levels and subsequently compute the empirical quantiles of the conformal atoms at those levels. By~\eqref{empiricalquantile}, these have the closed form $\hat{q}_{\tilde{\alpha}_i,|\mathcal{I}_2|}(\mathcal{C})=C_{(\lceil(n-m)\alpha_i+\alpha_{i} \rceil)}$. The procedure therefore amounts to preselecting a subset of the conformal atoms to be matched with each quantile level.

\par When $K=N:=n-m$, the grid is $\alpha_i=i/N$. Since \[ \tilde{\alpha}_i = \alpha_i\left(1+\frac{1}{N}\right), \] the matched quantiles satisfy \[ y_{x_{n+1}}^{\alpha_i} = C_{\left(\left\lceil (N+1)\alpha_i\right\rceil\right)} = C_{(i+1)}, \qquad i=1,\dots,N-1. \] Consequently, for \[ y\in[C_{(i)},C_{(i+1)}), \qquad i=1,\dots,N-1, \] the smallest grid level whose matched quantile is strictly greater than $y$ is $\alpha_i=i/N$. Therefore, \[ \tilde Q_n^{QM}(y)=\frac{i}{N} = Q^{\mathrm{crisp}}_n(y). \] Moreover, both constructions equal zero for $y<C_{(1)}$ and one for $y\ge C_{(N)}$. Hence, at full resolution, the quantile-matched predictive distribution coincides exactly with the right-continuous crisp predictive distribution. This establishes the following corollary.

\begin{corollary}
    \label{crispthm}
    Let $Q^{\text{crisp}}$ be the crisp modification of randomized predictive distributions as defined in~\eqref{crisp}. Then the following upper bound holds:
    \begin{equation}
                \sup_{u \in (0,1)} \left|\prob(Q^{\text{crisp}}(Z_1, \dots, Z_n, (X_{n+1}, Y_{n+1}))\leq u) -u\right| \leq \frac{1}{n-m}+\frac{1}{n-m+1}.
                \label{distancecrisp}
    \end{equation}
\end{corollary}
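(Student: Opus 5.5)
The plan is to reduce Corollary~\ref{crispthm} to Theorem~\ref{qm-theorem} by specializing the grid size to $K=N:=n-m$. The identification carried out just before the statement gives $\tilde Q^{QM}_n = Q^{\mathrm{crisp}}_n$ pointwise in $y$ when $K=N$. Once this holds for every realization of the data, the two random variables $\tilde Q^{QM}(Z_1,\dots,Z_n,(X_{n+1},Y_{n+1}))$ and $Q^{\mathrm{crisp}}(Z_1,\dots,Z_n,(X_{n+1},Y_{n+1}))$ coincide almost surely. Their distribution functions therefore agree, and bound \eqref{distanceqm} with $1/K=1/(n-m)$ is exactly \eqref{distancecrisp}.

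The steps, in order, are as follows.

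\begin{enumerate}
\item Under the almost-sure distinctness of the atoms, verify $y_{x_{n+1}}^{\alpha_i}=C_{(\lceil (N+1)i/N\rceil)}=C_{(i+1)}$ for $i=1,\dots,N-1$. This holds because $i<(N+1)i/N=i+i/N\le i+1$ whenever $1\le i\le N$, and it uses the closed form \eqref{empiricalquantile} together with \eqref{cpquantilefromatoms}.
\item Check agreement on each piece of the real line:
  \begin{itemize}
  \item on $y<C_{(1)}$, both functions are $0$, by the boundary correction in \eqref{quantileaps} and by convention for the crisp CDF;
  \item on $[C_{(i)},C_{(i+1)})$, the smallest $\alpha_j$ with $y<C_{(j+1)}$ is $\alpha_i=i/N$;
  \item on $y\ge C_{(N)}$, both equal $1$, using the convention $y^{\alpha_K}=+\infty$.
  \end{itemize}
\item Conclude that the two predictive systems are the same function of the data. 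Then apply Theorem~\ref{qm-theorem}, whose proof only used Proposition~\ref{propositionguarantee} at each grid level together with the interpolation argument between consecutive grid levels, with $K=N$.
\end{enumerate}

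The main obstacle is that Theorem~\ref{qm-theorem} was set up under the strict condition $K<n-m+1$. We have $K=N<N+1$, so this is satisfied, but it sits at the extreme. I would double-check two things at this edge:
\begin{itemize}
\item the top level $\alpha_{N-1}$ still gives an adjusted level $\tilde\alpha_{N-1}\le 1$, so that the empirical quantile is a finite atom rather than $+\infty$;
\item the equivalence \eqref{ifandonlyif} remains valid for $i=K$, where $y^{\alpha_K}=+\infty$ trivially gives probability $1$.
\end{itemize}
If any boundary case misbehaves, I would instead argue directly. The events $\{Q^{\mathrm{crisp}}\le i/N\}=\{Y_{n+1}<C_{(i+1)}\}$ have probability in $[i/N,\,i/N+1/(N+1)]$ by Proposition~\ref{propositionguarantee}. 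Interpolating between consecutive levels, exactly as in \eqref{upperbound2}–\eqref{upperbound3}, then gives the bound $1/N+1/(N+1)$.
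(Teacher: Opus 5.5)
Your proposal is correct and follows the paper's proof. You set $K=N=n-m$ and use $y^{\alpha_i}_{x_{n+1}}=C_{(\lceil (N+1)i/N\rceil)}=C_{(i+1)}$ to identify $\tilde Q^{QM}_n$ with $Q^{\mathrm{crisp}}_n$ pointwise, then read off \eqref{distanceqm} with $1/K=1/(n-m)$. Your edge checks ($K=N<N+1$, $\tilde\alpha_{N-1}=(N^2-1)/N^2<1$) and your direct fallback via $\prob(Y_{n+1}<C_{(i+1)})\in[i/N,\,i/N+1/(N+1)]$ are sound, but the paper's one-line argument does not need them.
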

\begin{proof}
    Apply Theorem~\ref{qm-theorem} with $K=n-m$, noting that the quantile-matching construction at this resolution coincides with~\eqref{crisp} as argued above.
\end{proof}

\par In the context of conformal prediction, the approach of quantile-matching is particularly advantageous. Usually, constructing a predictive distribution starting from conditional quantiles is a difficult task, as it involves the risk of quantile crossing, i.e. the estimated conditional quantiles are not increasing in $\alpha$. Imposing monotonicity involves major additional effort \cite{crossing}. However, in our case, given the straightforward relation \eqref{cpquantilefromatoms}, monotonicity of the estimated quantiles is automatically satisfied, in view of the fact that the empirical quantile function is automatically nondecreasing in $\alpha$.

\subsection*{Quantile-matched predictive density}

Given that the user has freedom in choosing both the number and the levels of the quantiles, the trade-off between a larger deviation from the uniform distribution and greater flexibility becomes more manageable. For instance, certain practitioners might be more interested in more extreme scenarios, and one could use a non-uniform grid concentrating mass where precision is most needed.

\par For the quantile-matching method, one can use a forward finite difference scheme to approximate the density at a given quantile level:
\begin{equation}
    \hat{f}_{QM}(\cdot, (x,y_x^{\alpha_i}))=\frac{\tilde{Q}^{QM}(\cdot, (x,y_x^{\alpha_{i+1}}))-\tilde{Q}^{QM}(\cdot,(x, y_{x}^{\alpha_{i}}))}{y_x^{\alpha_{{i+1}}}-y_x^{\alpha_i}}=\frac{\alpha_{i+1}-\alpha_{i}}{y_x^{\alpha_{{i+1}}}-y_x^{\alpha_i}},
    \label{QM-pdf}
\end{equation}
which, under an evenly spaced grid, translates to $\hat{f}_{QM}(\cdot, (x,y_x^{\alpha_i}))=1/(K(y_x^{\alpha_{{i+1}}}-y_x^{\alpha_i}))$.

Using fewer quantiles removes the noisy patterns at the density level, as we shall shortly see in our examples, while preserving the normalization of the density function. The user is responsible for picking the right number of quantiles, depending on their tolerance for the perturbation of the probability integral transform. The computational cost of this step is $O(K)$ in the density evaluation phase, compared to $O(n-m)$ for direct finite differencing, which yields a substantial speedup for large calibration sets compared to conformal predictive distributions. Nonetheless, \eqref{quantileaps} is still a step function and hence not differentiable, which motivates the optimal kernel smoothing method introduced below. The two approaches complement each other: quantile matching sets the resolution of the distribution to a user-defined set of quantile levels, while optimal smoothing ensures a differentiable, closed-form distribution that remains $\varepsilon$-close to its conformal target and preserves asymptotic validity.

\color{black}
\section{Optimal kernel smoothing of conformal distributions}\label{sec:optimal_filtering}

In the context of retrieving predictive densities, applying kernel smoothing on an empirical \citep{nadaraya_new_1964, azzalini_note_1981, bowman_bandwidth_1998} or tail-corrected conformal distribution function yields smooth, closed-form predictive densities that are analytically differentiable. However, this comes at the price of losing the marginal coverage guarantees established by Theorem \ref{distance-tailcorrected}. In what follows, we construct a constrained optimization problem that gives the highest kernel bandwidth for which the smoothed CDF remains $\varepsilon$-close to its target conformal object. 
The difference between our approach and previous techniques in the literature is two-fold. First, unlike earlier works, see e.g. \cite{swanepoel_mean_1988, jones_performance_1990}, our optimization problem is not aimed to minimize a distance in distribution such as the mean integrated square error, but rather maximizes the bandwidth. Second, we impose distance from the conformal distribution target as a \emph{constraint}.
Therefore, by virtue of Theorem~\ref{thm:approximation_pit_guarantee}, we preserve an asymptotic coverage guarantee while directly controlling the associated PIT estimate in finite samples. Finally, we show that in case of the Epanechnikov kernel, this optimization problem has a closed-form solution.

The common key characteristic of the two conformal CDFs above is that they are \emph{piecewise-constant}, right-continuous functions fully characterized by the locations and sizes of jumps between two adjacent atoms.
To smooth a general (possibly tail-corrected CPD \eqref{randomized-tailcorrected} or quantile-matched \eqref{quantileaps}) conformal
object, we abstract $\tilde{Q}(\cdot, (X_{n+1}, Y_{n+1}))$ by its jumps.
\begin{definition}[Atoms, levels, jump masses]
\label{def:jumps}
Let $Q_n$ have jumps at ordered locations $\Catom{1} < \cdots < \Catom{M}$ with
right-limit levels $F_i := Q_n(\Catom{i}^{+})$ and $F_0 := Q_n(\Catom{1}^{-})$.
Define the \emph{jump masses}
\begin{equation}\label{eq:jump_mass}
\Delta_i := F_i - F_{i-1} \ge 0, \qquad i = 1,\dots,M,
\end{equation}
so that the step CDF is 
\begin{align}\label{eq:heaviside_cdf}
	Q_n(y) = \sum_{i=1}^M \Delta_i\, H(y - \Catom{i})
\end{align}
with
$H$ the Heaviside step. For the tail-corrected CPD \eqref{tail-corrected-cpd-piecewise},
$M = N$ and $\Delta_1=(1+\tau)/(N+1), \Delta_i \equiv 1/(N+1), i=2, \dots, N-1$, $\Delta_N=(2-\tau)/(N+1)$; for the quantile-matching discretizations
with $K$ quantile levels, $M = K$ and the $\Delta_i$ are the level increments $\alpha_i - \alpha_{i-1}, i=1, \dots, K$.
\end{definition}

The goal is to smooth the piecewise-constant CDFs given either by the tail-corrected CPD of \eqref{tail-corrected-cpd-piecewise} or by the quantile-matched predictive distribution in \eqref{eq:qm-final}, in a way that the associated PDFs are less noisy than in the naive finite differenced case. We propose to achieve this by kernel smoothing. Recall, see e.g. \cite{silverman_density_2018}, that a kernel $K$ is a symmetric,
non-negative, integrable function with $\int K = 1$; for bandwidth $h>0$ we
write $K_h(u) = h^{-1}K(u/h)$ and $\mathbb{K}(t) \coloneqq \int_{-\infty}^{t} K(s)\,ds$
for the (unit-bandwidth) kernel CDF, with $\mathbb{K}_h(z) = \mathbb{K}(z/h)$.

Smoothing means convolving the step CDF with the kernel,
$Q_n^{h} := Q_n * K_h$. Because convolution is linear and a single step
convolved with a kernel is a shifted kernel CDF,
$H(\cdot - \Catom{i}) * K_h = \mathbb{K}_h(\cdot - \Catom{i})$, we obtain a
closed form for \emph{any} kernel \citep{wand_kernel_1994}. This is established in the following proposition.

\begin{proposition}[Closed-form smoothed CDF and density]
	\label{prop:closedform}
	For any kernel $K$ with CDF $\mathbb{K}$,
	\begin{equation}
		\label{eq:cdf-general}
		Q_n^{h}(y) = \sum_{i=1}^{M} \Delta_i\, \mathbb{K}\!\left(\frac{y-\Catom{i}}{h}\right),
		\qquad
		\hat{f}^{h}(y) = \frac{d}{dy}Q_n^{h}(y)
		= \sum_{i=1}^{M} \Delta_i\, \frac{1}{h} K\!\left(\frac{y-\Catom{i}}{h}\right).
	\end{equation}
	$Q_n^{h}$ is non-decreasing (non-negative weights, monotone $\mathbb{K}$) and
	bounded in $[0, \sum_i \Delta_i]$; $\hat{f}^h$ is a weighted kernel density
	estimate on the atoms and integrates to $\sum_i \Delta_i$.
\end{proposition}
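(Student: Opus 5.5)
The plan is to begin from the Heaviside representation \eqref{eq:heaviside_cdf} of the step function, $Q_n(y)=\sum_{i=1}^M \Delta_i H(y-\Catom{i})$, and to use linearity of convolution. Since $Q_n$ is a finite sum, $Q_n * K_h = \sum_{i=1}^M \Delta_i\, (H(\cdot-\Catom{i}) * K_h)$, so everything reduces to computing the convolution of a single shifted Heaviside step with $K_h$. For a fixed atom $c$ we write
\[
(H(\cdot - c) * K_h)(y) = \int_{\R} H(y-s-c)\,K_h(s)\,ds = \int_{-\infty}^{y-c} K_h(s)\,ds .
\]
The substitution $s = h t$ turns this into $\int_{-\infty}^{(y-c)/h} K(t)\,dt = \mathbb{K}((y-c)/h)$. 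All integrals are finite because $K\ge 0$ and $\int K = 1$. Summing over $i$ then gives the first formula in \eqref{eq:cdf-general}.

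For the density formula, we will differentiate term by term. The sum is finite, so no interchange of limits is needed. By the fundamental theorem of calculus, $\frac{d}{dy}\mathbb{K}((y-\Catom{i})/h) = h^{-1} K((y-\Catom{i})/h)$ at every continuity point of $K$. This is the only point needing care: for a general integrable kernel the identity holds at every continuity point of $K$, and otherwise Lebesgue-almost everywhere. I would state it in that form, noting that for the kernels used later (such as Epanechnikov) $K$ is continuous, so the identity holds everywhere. The conclusion is that $\hat f^h(y) = \sum_i \Delta_i K_h(y-\Catom{i})$, which is a weighted kernel density estimate centred at the atoms.

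The remaining properties then follow from the closed form. $\mathbb{K}$ is non-decreasing because $K\ge 0$, and $\Delta_i \ge 0$ by \eqref{eq:jump_mass}, so $Q_n^h$ is non-decreasing. Since $0\le \mathbb{K}\le 1$, we get $0 \le Q_n^h \le \sum_i \Delta_i$. For the integral of the density, we use $\int K_h(y-c)\,dy = \int K = 1$ (or equivalently the limits $\mathbb{K}(\pm\infty)\in\{0,1\}$), which gives $\int \hat f^h = \sum_i \Delta_i$. I would also remark that $\sum_i \Delta_i = F_M - F_0$, which equals $1$ for both the tail-corrected CPD and the quantile-matched CDF, since both run from $0$ to $1$. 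None of these steps is a genuine obstacle. The only subtlety is the differentiability caveat for discontinuous kernels.
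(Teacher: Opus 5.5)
Your proposal is correct and follows essentially the same route as the paper's proof: write $Q_n$ as a finite sum of shifted Heaviside steps, use linearity of convolution, compute a single term by substitution to get $\mathbb{K}((y-\Catom{i})/h)$, differentiate term by term, and read off monotonicity, the bounds $[0,\sum_i\Delta_i]$, and $\int\hat f^h=\sum_i\Delta_i$ from the closed form. Your caveat goes slightly beyond the paper. The identity $\mathbb{K}'=K$ holds at continuity points of $K$, and only almost everywhere for a general integrable kernel. The paper uses $\mathbb{K}'=K$ without this qualification, so your version is the more careful one; for the Epanechnikov kernel, which is continuous, it holds everywhere.
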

The proof of this proposition is given in Appendix \ref{proof:closedform-smoothing}.
Proposition \ref{prop:closedform} has three fundamental implications:
\begin{itemize}
		\item for $\Delta_i \geq 0$, the associated smoothed CDF inherits its monotonicity from the kernel;
		\item the closed-form expressions in \eqref{eq:cdf-general} eliminate the necessity for numerical approximation of both the smoothing convolution integrals, and the finite difference approximations of its associated density;
		\item as long as $\sum_i \Delta_i = 1$, one naturally gets the correct limits for the smoothed CDF, and the normalization of its associated density.
\end{itemize}
In other words, kernel-smoothing a conformal distribution is exactly a weighted KDE placed on the conformal
atoms \citep{parzen_estimation_1962, rosenblatt_remarks_1956}.
\subsection*{The fidelity-constrained bandwidth problem}

Formulae \eqref{eq:cdf-general} provide closed-form expressions for any bandwidth $h$ for the smoothed conformal CDFs and their associated \emph{analytical} densities. However, they do not necessarily satisfy the perturbation condition in Theorem~\ref{thm:approximation_pit_guarantee}, i.e. there is no a-priori guarantee that the smoothed CDF remains $\varepsilon$-close to the (corrected) conformal CDF, which, in particular, implies that one cannot use the PIT-perturbation guarantee in \eqref{eq:approximation_pit_guarantee} to quantify validity loss. We address this by formulating an associated \emph{fidelity-constrained} optimization problem of the bandwidth, that imposes $\varepsilon$-distance from the conformal CDF, hence preserves the asymptotic coverage guarantee.

Before presenting the associated optimization problem, we first address what is the \textit{best} one can expect in terms of distance at conformal atoms. Formally, we specify a \emph{target} $t_j$ and constrain the smoothed CDFs to be $\varepsilon$-close to the conformal CDF at atom $j$, $d_j(h)=Q_n^h(C_{(j)}) - t_j\leq \varepsilon$. This problem is ill-posed when one naively sets the target to be the (right-continuous) value of the piecewise constant conformal CDF.

At an atom $j$ the step CDF is discontinuous, with different left and right limits $F_{j-1}$
and $F_j$. With a careful inspection of \eqref{eq:cdf-general}, we find
\begin{equation}
	\label{eq:atom-split}
	Q_n^{h}(\Catom{j})
	= \underbrace{\sum_{i<j}\Delta_i\,\mathbb{K}\!\left(\tfrac{\Catom{j}-\Catom{i}}{h}\right)}_{\to\,F_{j-1}\equiv \sum_{i<j} \Delta_i\ \text{as }h\to0^+}
	\;+\; \underbrace{\Delta_j\,\mathbb{K}(0)}_{=\,\frac12\Delta_j\ \text{(self term)}}
	\;+\; \underbrace{\sum_{i>j}\Delta_i\,\mathbb{K}\!\left(\tfrac{\Catom{j}-\Catom{i}}{h}\right)}_{\to\,0\ \text{as }h\to0^+}.
\end{equation}
This means that every conformal atom's own jump contributes its half-mass $\tfrac{1}{2}\Delta_j$ to the smoothed CDF, as for any (symmetric) kernel $\mathbb{K}(0)=1/2$. Since in the $h\to0^+$ limit all higher atoms' contributions vanish, this means that the midpoint $F_{j-1}+\tfrac{1}{2}\Delta_j$ is the limit of the smoothed CDF for vanishing bandwidths.

The discussion above motivates the introduction of the following fidelity target.
\begin{definition}[Fidelity target]
	\label{def:target}
	Given the jump mass in \eqref{eq:jump_mass}, the \emph{midpoint target} at atom $\Catom{i}$ is
	\begin{equation}
		t_i := \tfrac12\bigl(F_{i-1} + F_i\bigr) = F_{i-1} + \tfrac12 \Delta_i .
	\end{equation}
\end{definition}

The choice of $t_i$ in Definition~\ref{def:target} is not a modelling
convention but the unique choice attainable in the $h\to0^+$ limit. Nonetheless, using this fidelity target one preserves a perturbation PIT guarantee as stated in the following corollary, which is a direct consequence of Theorem~\ref{thm:approximation_pit_guarantee}.
\begin{corollary}[Validity guarantee with $\varepsilon$-fidelity for midpoint targets]
Given a bandwidth $h$ that satisfies $\varepsilon$-fidelity at each conformal atom, one preserves an $\epsilon\coloneqq \varepsilon + \max_{i} \tfrac{1}{2}\Delta_i$ perturbed validity guarantee for the smoothed CDF
	\begin{align}\label{eq:validity_guarantee_after_smoothing:general}
		\sup_{u\in [0, 1]} \Big|\mathbb{P}(Q_n^{h, CPD}(Y_{n+1})\leq u) - u\Big|\leq \epsilon + \frac{1}{n-m+1},
	\end{align}
	and for evenly-spaced quantile matching
    \begin{align}\label{eq:validity_guarantee_after_smoothing:qm}
		\sup_{u\in [0, 1]} \Big|\mathbb{P}(Q_n^{h, QM}(Y_{n+1})\leq u) - u\Big|\leq  \varepsilon + \frac{3}{2K} + \frac{1}{n-m+1}.
	\end{align}
\end{corollary}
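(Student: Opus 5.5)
The plan is to reduce the corollary to Theorem~\ref{thm:approximation_pit_guarantee} for the CPD case, and to the argument behind Theorem~\ref{qm-theorem} for the quantile-matching case. This requires turning the $\varepsilon$-fidelity condition, which only constrains $Q_n^h$ at the atoms against the midpoint targets $t_i$, into a uniform bound $\sup_y |Q_n^h(y)-\tilde{Q}(y)|\le \epsilon$ against the underlying step CDF. Here $\tilde{Q}$ is the tail-corrected CPD \eqref{randomized-tailcorrected} or $\tilde{Q}^{QM}$, as appropriate. Throughout I read $\varepsilon$-fidelity as $|Q_n^h(\Catom{i})-t_i|\le\varepsilon$ for every $i$.

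\textbf{Step 1 (from targets to the step function at atoms).} At each atom $\Catom{i}$, the step CDF takes the value $F_i$ (right-continuous), and its left limit is $F_{i-1}$. The randomized versions take values in $[F_{i-1},F_i]$. Since $|t_i-F_i|=|t_i-F_{i-1}|=\tfrac12\Delta_i$, the triangle inequality gives $|Q_n^h(\Catom{i})-\tilde Q(\Catom{i}^{\pm})|\le \varepsilon+\tfrac12\Delta_i$.

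\textbf{Step 2 (between atoms).} For $y\in(\Catom{i},\Catom{i+1})$, the step function is constant, equal to $F_i$. The smoothed CDF $Q_n^h$ is non-decreasing by Proposition~\ref{prop:closedform}, so $Q_n^h(\Catom{i})\le Q_n^h(y)\le Q_n^h(\Catom{i+1})$. Combining this with Step 1 gives $F_i-\tfrac12\Delta_i-\varepsilon\le Q_n^h(y)\le F_i+\tfrac12\Delta_{i+1}+\varepsilon$, that is, a deviation of at most $\varepsilon+\max_i\tfrac12\Delta_i$. The tails $y<\Catom{1}$ and $y>\Catom{M}$ are handled the same way, using $Q_n^h\in[0,1]$ and $\sum_i\Delta_i=1$. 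This monotonicity sandwich is the main obstacle: the fidelity constraint is only pointwise at atoms, and the key realization is that monotonicity of $Q_n^h$ propagates it to all of $\R$ at the cost of the half-jump. I would check carefully that the upper neighbour's half-jump $\Delta_{i+1}$ rather than $\Delta_i$ enters, which is why the bound uses $\max_i$.

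\textbf{Step 3 (conclude).} For the CPD case, Step 2 gives exactly hypothesis \eqref{eq:epsilon_approximation} with $\epsilon$ in place of $\varepsilon$, and Theorem~\ref{thm:approximation_pit_guarantee} yields \eqref{eq:validity_guarantee_after_smoothing:general}. For quantile matching, $\Delta_i=1/K$, so $\epsilon=\varepsilon+\tfrac1{2K}$. I cannot invoke Theorem~\ref{thm:approximation_pit_guarantee} directly there, since it is stated for the CPD. Instead I would repeat its inclusion argument with $\tilde{Q}^{QM}$ as the reference: $\{Q_n^{h,QM}\le u\}\subseteq\{\tilde Q^{QM}\le u+\epsilon\}$ and conversely. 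Inserting the bound \eqref{distanceqm} of Theorem~\ref{qm-theorem} (deviation $\le \tfrac1K+\tfrac1{n-m+1}$) then gives $\varepsilon+\tfrac1{2K}+\tfrac1K+\tfrac1{n-m+1}=\varepsilon+\tfrac{3}{2K}+\tfrac1{n-m+1}$, which is \eqref{eq:validity_guarantee_after_smoothing:qm}.
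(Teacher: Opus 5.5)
Your proposal is correct and follows the route the paper intends. The paper only asserts the corollary as a direct consequence of Theorem~\ref{thm:approximation_pit_guarantee}, combined with the $\tfrac1K+\tfrac{1}{n-m+1}$ bound of Theorem~\ref{qm-theorem} in the quantile-matching case. Your monotonicity sandwich supplies the step the paper leaves implicit: it upgrades fidelity at the atoms to the uniform-in-$y$ hypothesis \eqref{eq:epsilon_approximation}, at cost $\max_i\tfrac12\Delta_i$.

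One thing to tidy: in the CPD case $\Delta_1=\tfrac{1+\tau}{n-m+1}$ and $\Delta_{n-m}=\tfrac{2-\tau}{n-m+1}$ depend on $\tau$. Theorem~\ref{thm:approximation_pit_guarantee} needs a deterministic tolerance, so bound $\max_i\tfrac12\Delta_i$ by its worst case $\tfrac{1}{n-m+1}$ before invoking it.
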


We require the smoothed CDF to reproduce the conformal predictive distribution
at every atom up to tolerance $\eps>0$. Putting $\Kbar := 1 - \mathbb{K}$, the signed
per-atom deviation from the target in Definition~\ref{def:target} reads as
\begin{equation}
	\label{eq:dev}
	d_j(h) := Q_n^{h}(\Catom{j}) - t_j
	= \sum_{i>j} \Delta_i\, \Kbar\!\left(\frac{\Catom{i}-\Catom{j}}{h}\right)
	- \sum_{i<j} \Delta_i\, \Kbar\!\left(\frac{\Catom{j}-\Catom{i}}{h}\right).
\end{equation}
In the above, we used the facts that $F_{j-1}\equiv \sum_{i<j} \Delta_i$ and that for a symmetric kernel $\mathbb{K}(y) = 1-\mathbb{K}(-y)=\bar{\mathbb{K}}(-y)$.
The two sums represent, respectively, the right-neighbour mass the kernel smears
below $\Catom{j}$ and the left-neighbour mass smeared above it. As $h \to 0$
every tail term vanishes, so $d_j(h) \to 0$: minimal smoothing reproduces the
midpoint targets exactly. The fidelity constraint is then given by
$\max_j |d_j(h)| \le \eps$, and the associated bandwidth optimization problem is defined as follows
\begin{problem}[Maximum-bandwidth smoothing]
	\label{prob:maxbw}
	Find the largest global bandwidth whose smoothed CDF is $\eps$-faithful at all
	atoms:
	\begin{equation}
		\label{eq:maxbw}
		h^{\star} = \max\Bigl\{ h > 0 : \max_{1\le j\le M} |d_j(h)| \le \eps \Bigr\}.
	\end{equation}
\end{problem}

Since larger $h$ produces a lower-variation density, Problem~\ref{prob:maxbw}
returns the smoothest\footnote{Note: the "mapping" $\textnormal{bandwidth}\mapsto \textnormal{smoothness}$ is not necessarily monotone, i.e. there may be smaller bandwidths that produce more regular versions of the CDF. The above reasoning is heuristic, justified by the limit behaviors $\lim_{h\to 0/\infty}$.} admissible density in the crude "as much blur as
allowed" sense. We now establish that it is well posed.

\subsection*{Feasibility, non-emptiness and attainment}

Let $\mathcal{F}_j := \{h>0 : |d_j(h)|\le\eps\}$ and
$\mathcal{F} := \bigcap_{j=1}^M \mathcal{F}_j$ for the feasibility sets per atom and in total, respectively, and put $h^\star = \sup\mathcal{F}$ for the optimal bandwidth of the optimization Problem~\ref{prob:maxbw}, and define $g\coloneqq \max_{1\le j\le M} |d_j(h)|$.
The following proposition establishes that the feasibility set is non-empty and the optimum is attained.

\begin{proposition}[Non-emptiness and attainment]
\label{prop:nonempty}
For any sufficiently small $\eps>0$ the feasible set $\mathcal{F}$ is non-empty, so $h^\star>0$ is
well defined. Moreover $\mathcal{F}$ is bounded above and the supremum in
\eqref{eq:maxbw} is attained.
\end{proposition}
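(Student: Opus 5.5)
We study each $d_j$ as a function of $h$ on $(0,\infty)$ and establish three facts in order: continuity, the limit as $h\to0^+$, and the limit as $h\to\infty$.

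\textbf{Continuity.} The kernel CDF $\mathbb{K}$ is continuous, because $K$ is integrable. Hence each map $h\mapsto \Kbar((\Catom{i}-\Catom{j})/h)$ is continuous on $(0,\infty)$. It follows that every $d_j$ in \eqref{eq:dev}, and therefore $g(h)=\max_j|d_j(h)|$, is continuous on $(0,\infty)$. Consequently each $\mathcal{F}_j$, and their intersection $\mathcal{F}=\{h>0: g(h)\le\eps\}$, is relatively closed in $(0,\infty)$.

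\textbf{Non-emptiness.} The atoms are distinct, so $\delta:=\min_{i\neq j}|\Catom{i}-\Catom{j}|>0$. Every argument of $\Kbar$ in \eqref{eq:dev} is at least $\delta/h$, which tends to $+\infty$ as $h\to0^+$, and $\Kbar(t)\to0$ as $t\to\infty$. This gives the bound
\[
|d_j(h)|\le \Bigl(\sum_i\Delta_i\Bigr)\Kbar(\delta/h)\to 0 .
\]
Thus, for any $\eps>0$, some interval $(0,h_0]$ lies in $\mathcal{F}$, so $\mathcal{F}$ is non-empty and $h^\star>0$.

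\textbf{Boundedness.} As $h\to\infty$, every argument of $\Kbar$ tends to $0$, and $\Kbar(0)=\tfrac12$ by symmetry of the kernel. Hence
\[
d_j(h)\to \tfrac12\Bigl(\sum_{i>j}\Delta_i-\sum_{i<j}\Delta_i\Bigr).
\]
Take $j=1$, assuming $M\ge2$. The limit is then $\tfrac12(1-\Delta_1)>0$, because $\sum_i\Delta_i=1$ and $\Delta_1<1$. Choose $\eps<\tfrac12(1-\Delta_1)$; this is where the hypothesis ``sufficiently small $\eps$'' is used. Then there is $H$ with $|d_1(h)|>\eps$ for all $h>H$, so $\mathcal{F}\subseteq(0,H]$ is bounded above.

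\textbf{Attainment.} By the previous step, $h^\star=\sup\mathcal{F}$ is finite and positive. Take a sequence $h_k\in\mathcal{F}$ with $h_k\uparrow h^\star$. Since $h^\star>0$, the point $h^\star$ lies in the open domain $(0,\infty)$, where $g$ is continuous. Continuity gives $g(h^\star)=\lim_k g(h_k)\le\eps$, so $h^\star\in\mathcal{F}$ and the maximum in \eqref{eq:maxbw} is attained.

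The main obstacle is the boundedness step. It needs a strictly positive limiting deviation at some atom, and must address a degenerate case. If $M=1$, then $d_1\equiv0$ and $\mathcal{F}=(0,\infty)$, so this case has to be excluded, or else $\eps$ must be taken small relative to the jump structure. For the tail-corrected CPD and quantile-matching settings, $M\ge2$ and $\Delta_1<1$ hold as soon as $N\ge2$ or $K\ge2$ respectively.
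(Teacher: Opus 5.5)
Your proof is correct and follows essentially the same route as the paper: continuity of each $d_j$, vanishing deviations as $h\to0^+$, the limit $d_1(h)\to\tfrac12(1-\Delta_1)$ as $h\to\infty$ for boundedness, and continuity of $g$ for attainment. The differences are cosmetic. You get the uniform small-bandwidth interval directly from the minimal-gap bound $|d_j(h)|\le\bigl(\sum_i\Delta_i\bigr)\Kbar(\delta/h)$, which the paper records as a remark after its proof, rather than by minimizing finitely many per-atom thresholds; and you make explicit the requirement $\Delta_1<1$ (i.e.\ $M\ge2$), which the paper's condition $\eps<\tfrac12(1-\Delta_1)$ needs but never states.
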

The proof is given in Appendix~\ref{proof:nonempty}.

\subsection*{Existence of closed-form optima}
A crucial observation that allows for the closed-form solution of the optimization problem is that at the optimum some fidelity constraint is necessarily active. We record this
first, since it is what turns the definition $h^\star=\sup\mathcal{F}$ into a scalar root equation that solvers below directly
target.

\begin{proposition}[Constraint is saturated at the optimum]
\label{prop:active}
\sloppy
The maximizer $h^\star$ of Problem~\ref{prob:maxbw} satisfies
$\max_{1\le j\le M}|d_j(h^\star)|=\eps$; that is, at least one atom's deviation
attains the tolerance.
\end{proposition}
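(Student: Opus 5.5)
The plan is to argue by contradiction, using continuity of $g(h)=\max_{1\le j\le M}|d_j(h)|$ in $h$ together with the attainment result of Proposition~\ref{prop:nonempty}.

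First, I would record continuity. For a kernel $K$ that is integrable, $\mathbb{K}$ is continuous, and hence so is $\Kbar = 1-\mathbb{K}$. Each map $h\mapsto \Kbar\bigl((\Catom{i}-\Catom{j})/h\bigr)$ is therefore continuous on $(0,\infty)$, since $(\Catom{i}-\Catom{j})/h$ depends continuously on $h>0$. By \eqref{eq:dev}, each $d_j(h)$ is a finite weighted sum of such terms, so $d_j$ is continuous on $(0,\infty)$. Consequently $|d_j|$ is continuous, and $g$, a finite maximum of continuous functions, is continuous as well.

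Second, suppose for contradiction that $g(h^\star)<\eps$. Proposition~\ref{prop:nonempty} gives that $h^\star>0$ exists and lies in $\mathcal{F}$, so $g(h^\star)\le\eps$, and the only alternative to the claim is strict inequality. By continuity of $g$ at $h^\star$, there is $\delta>0$ such that $g(h)<\eps$ for all $h\in[h^\star,h^\star+\delta]$. Then $h^\star+\delta\in\mathcal{F}$, which contradicts $h^\star=\sup\mathcal{F}$. Hence $g(h^\star)=\eps$, and since the maximum over finitely many $j$ is attained, some atom $j$ satisfies $|d_j(h^\star)|=\eps$.

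The main obstacle is making sure the supremum is a genuine finite maximum, so that the contradiction is not vacuous. I will rely on Proposition~\ref{prop:nonempty} for this. If a self-contained justification is needed, the argument runs as follows. As $h\to\infty$, every argument $(\Catom{i}-\Catom{j})/h\to0$, so $\Kbar(\cdot)\to\tfrac12$ and
\[
d_j(h)\to\tfrac12\Bigl(\sum_{i>j}\Delta_i-\sum_{i<j}\Delta_i\Bigr).
\]
At $j=1$ this limit is $\tfrac12(1-\Delta_1)$, which is positive. For sufficiently small $\eps$ it exceeds $\eps$, so $\mathcal{F}$ is bounded. Closedness of $\mathcal{F}$ in $(0,\infty)$ follows from continuity of $g$, and $\mathcal{F}$ is nonempty because $d_j(h)\to0$ as $h\to0^+$. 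Together these give $h^\star\in\mathcal{F}$, which is all the contradiction argument needs.
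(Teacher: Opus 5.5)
Your proposal is correct and is essentially the paper's proof: it combines continuity of $g(h)=\max_{j}|d_j(h)|$ with the attainment $h^\star\in\mathcal{F}$ from Proposition~\ref{prop:nonempty}, and gets a contradiction with $h^\star=\sup\mathcal{F}$ by exhibiting a feasible bandwidth strictly to the right of $h^\star$ whenever $g(h^\star)<\eps$. Your optional self-contained justification of boundedness and attainment also mirrors Steps 1--4 of the paper's proof of Proposition~\ref{prop:nonempty}, including its reliance on $\sum_i\Delta_i=1$ for the large-$h$ limit $\tfrac12(1-\Delta_1)$ at $j=1$.
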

The proof is given in Appendix~\ref{proof:active}.
Conceptually, maximizing the strictly increasing map $h\mapsto h$ over a set bounded above
places the maximizer at the upper edge of the feasible set; nothing in the
objective rewards an interior point, so the constraint must saturate. The argument uses only
$\sup\mathcal{F}$, attainment, and continuity; it is indifferent to the
\emph{shape} of $\mathcal{F}$, and in particular remains valid even when
$\mathcal{F}$ is not a single interval. What the proposition does \emph{not} tell us is
\emph{which} atom binds, nor its sign; identifying $j^\star$ is exactly the
per-atom, per-interval enumeration carried out for the Epanechnikov kernel in
Theorem~\ref{thm:epa-agg}. Formally, Proposition \ref{prop:active} leads to the following optimization problem equivalent to Problem~\ref{prob:maxbw}.
\begin{problem}[Equivalent formulation of Problem~\ref{prob:maxbw}]
	\label{prob:maxbw:equiv}
	Find the largest global bandwidth whose smoothed CDF is $\eps$-faithful at all
	atoms:
	\begin{equation}
		\label{eq:maxbw:equiv}
		h^{\star} = \max\Bigl\{ h > 0 : \max_{1\le j\le M} |d_j(h)| = \eps \Bigr\}.
	\end{equation}
\end{problem} The tractability of solving the resulting equation \eqref{eq:maxbw:equiv} depends entirely on the kernel.
In order to ease the presentation, in what follows we use the following assumption.
\begin{assumption}[Connected feasible set]\label{assumption:connected}
 	The feasible region $\mathcal{F}$ is connected, equivalently a single interval of the form $\mathcal{F}=(0, h^\star]$.
 \end{assumption}
  In particular, for the values of $h^\star$ found below to be the true optima, we require that the first time $\max_{1\leq j\leq M}|d_j(h)|$ hits $\varepsilon$ from the left, is also the right-most end of the feasible region. In case this assumption is violated, optima below are conservative underestimations of the highest bandwidth. A sufficient condition ensuring connectedness is that $g$ is a non-decreasing mapping of $h$. We emphasize that in case of the Epanechnikov kernel, this assumption is mere presentational convenience, and the global optimum can still be recovered even if the feasible set is disconnected -- see Remark~\ref{remark:connected}.

\subsection*{Piecewise cubic closed-form solution for the Epanechnikov kernel}

The Epanechnikov kernel and its CDF are
\begin{equation}
	K_h(u) = \frac{3}{4h}\Bigl(1 - \tfrac{u^2}{h^2}\Bigr)\mathbf{1}\{|u|\le h\},
	\qquad
	\mathbb{K}(t) =
	\begin{cases}
		0, & t<-1,\\
		\tfrac12 + \tfrac34 t - \tfrac14 t^3, & -1\le t\le 1,\\
		1, & t>1,
	\end{cases}
\end{equation}
so the kernel CDF is a cubic polynomial. The smoothed objects are exact,
compactly supported piecewise polynomials,
\begin{equation}
	\label{eq:epa}
	Q_n^{h}(y) = \sum_{i=1}^{M} \Delta_i\, \mathbb{K}\!\left(\frac{y-\Catom{i}}{h}\right),
	\qquad
	\hat{f}^{h}(y) = \sum_{i=1}^{M} \Delta_i\, \frac{3}{4h}
	\Bigl(1 - \bigl(\tfrac{y-\Catom{i}}{h}\bigr)^2\Bigr)^{\!+},
\end{equation}
and each atom $i$ influences only the interval $[\Catom{i}-h, \Catom{i}+h]$.

Compact support makes the constraint local and polynomial. We derive
\eqref{eq:epa-cubic} in full, since the structure of the coefficients is what
makes the optimum solvable in closed form. The Epanechnikov complementary CDF is,
on the relevant range,
\begin{equation}
\label{eq:epa-Kbar}
\Kbar(z) = \tfrac12 - \tfrac34 z + \tfrac14 z^3 \quad (0\le z\le 1),
\qquad \Kbar(z)=0 \quad (z\ge 1),
\end{equation}
a cubic polynomial \emph{with no quadratic term}. By \eqref{eq:epa-Kbar}, a neighbour $i$ contributes to
$d_j$ in \eqref{eq:dev} only when its argument is below $1$, i.e.\
$|\Catom{i}-\Catom{j}|<h$; all others give
$\Kbar=0$ exactly. This defines the active set
$S_j(h):=\{i\ne j : |\Catom{i}-\Catom{j}|<h\}$. Write $s_i:=\operatorname{sgn}(\Catom{i}-\Catom{j})$, so the two signed sums of \eqref{eq:dev}
combine into one, and set $z_i:=|\Catom{i}-\Catom{j}|/h$ and $u:=1/h$. Then
\begin{align}\label{eq:epa:d_j:intermediate}
d_j(h) = \sum_{i\in S_j(h)} s_i\,\Delta_i\,\Kbar(z_i)
= \sum_{i\in S_j(h)} s_i\,\Delta_i\Bigl(\tfrac12 - \tfrac34 z_i + \tfrac14 z_i^3\Bigr).
\end{align}
Because $s_i=\pm1$ we have $s_i^3=s_i$, which lets the sign be absorbed into the odd
powers, giving the following expressions
\begin{equation*}
s_i\,z_i = s_i\,|\Catom{i}-\Catom{j}|\,u = (\Catom{i}-\Catom{j})\,u,
\quad
s_i\,z_i^3 = s_i\,|\Catom{i}-\Catom{j}|^3\,u^3 = (\Catom{i}-\Catom{j})^3\,u^3 .
\end{equation*}
Substituting into \eqref{eq:epa:d_j:intermediate} and collecting powers of $u$ identifies the three bracketed sums as
the coefficients of a cubic in $u$ in the binding equation of Problem~\ref{prob:maxbw:equiv}
\begin{align}\label{eq:epa-cubic}
d_j(h)
\begin{aligned}[t]
    = \underbrace{\tfrac12\!\!\sum_{i\in S_j(h)}\!\! s_i\Delta_i}_{\coloneqq c_0(h)}
\;-\; \tfrac34\Bigl(\underbrace{\sum_{i\in S_j(h)}\!\Delta_i(\Catom{i}-\Catom{j})}_{\coloneqq c_1(h)}\Bigr)u
\;+\; \tfrac14\Bigl(\underbrace{\sum_{i\in S_j(h)}\!\Delta_i(\Catom{i}-\Catom{j})^3}_{\coloneqq c_3(h)}\Bigr)u^3\\=\pm \varepsilon.
\end{aligned}
\end{align}
On any interval where the active neighbor set $S_j(h)$ is fixed, the coefficients $c_0, c_1, c_3$ are constant. Additionally, the binding condition $d_j(h)=\pm \varepsilon$ has no quadratic term, and is therefore solvable by Cardano's formula. Since $S_j(h)$ changes only at the breakpoints $h=|C_{(i)}-C_{(j)}|$, $d_j(h)$ is piecewise cubic in $u\equiv 1/h$, and solving its associated cubic roots on each active neighbor set gives all possible bandwidths where one atom saturates the feasibility constraint. This observation leads to the following aggregation of a \emph{global optimum}.
\begin{theorem}[Aggregation to a global optimum]
\label{thm:epa-agg}
Suppose Assumption~\ref{assumption:connected} holds.
Then the feasible region connected to $h\to 0^+$ ends at the first bandwidth at which
\emph{any} atom's deviation reaches $\eps$; hence
\begin{equation}
h^{\star} = \min_{1\le j\le M} \inf\{ h>0 : |d_j(h)| = \eps \},
\end{equation}
i.e.\ the global minimum over all valid crossings obtained by solving the cubics
in \eqref{eq:epa-cubic}. Each per-atom, per-interval crossing is a closed-form
Cardano root; $h^\star$ is the minimum of the finitely many valid ones.
\end{theorem}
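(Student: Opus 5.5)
The plan is to combine three ingredients: continuity of the deviation functions $d_j$ on $(0,\infty)$, their vanishing limit as $h\to0^+$, and Assumption~\ref{assumption:connected}. The argument then reduces to identifying the right endpoint of the feasible interval as the first time $g(h)=\max_j|d_j(h)|$ reaches $\eps$.

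\textbf{Step 1 (continuity and the limit at zero).} Since $\mathbb{K}$ is continuous for the Epanechnikov kernel, \eqref{eq:dev} shows that each $d_j$ is continuous in $h>0$. So is $g$, being a finite maximum of continuous functions. Moreover, $d_j(h)\to0$ as $h\to0^+$ for every $j$: when $h<\min_{i\ne j}|\Catom{i}-\Catom{j}|$, the active set $S_j(h)$ is empty and $d_j(h)=0$ exactly. Hence $g\equiv0<\eps$ on a right neighbourhood of $0$.

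\textbf{Step 2 (first hitting time).} Define $h_j:=\inf\{h>0:|d_j(h)|=\eps\}$, with the convention $\inf\emptyset=+\infty$, and set $\hat h:=\min_j h_j$. Proposition~\ref{prop:active} guarantees that some $|d_j(h^\star)|=\eps$, so $\hat h\le h^\star<\infty$. By continuity the infimum $h_j$ is attained whenever it is finite, and $h_j>0$ by Step 1.

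\textbf{Step 3 ($h^\star\le\hat h$).} Take any $h<\hat h$. Every $|d_j|$ starts below $\eps$ near $0$ and, by the intermediate value theorem, cannot exceed $\eps$ on $(0,\hat h)$ without first equalling it. Hence $g<\eps$ on $(0,\hat h)$ and $g(\hat h)=\eps$, so $(0,\hat h]\subseteq\mathcal F$. By Assumption~\ref{assumption:connected}, $\mathcal F=(0,h^\star]$, and $h^\star$ is the right end of the connected component containing $0^+$. The main obstacle sits here: at $\hat h$ the binding deviation could touch $\eps$ and then turn back down, so $\hat h$ need not exit $\mathcal F$. I would read the assumption, as in the remark following it, to mean that the first time $g$ hits $\eps$ from the left is the right endpoint of $\mathcal F$. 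Under that reading $h^\star=\hat h$. Without the assumption, the argument still gives the conservative bound $\hat h\le h^\star$.

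\textbf{Step 4 (closed form).} On each interval between consecutive breakpoints $|\Catom{i}-\Catom{j}|$, the set $S_j(h)$ is fixed. There $d_j(h)=\pm\eps$ is the depressed cubic \eqref{eq:epa-cubic} in $u=1/h$, and Cardano's formula gives its roots explicitly. A root is kept only if $1/u$ lies in that interval; these are the valid crossings. Then $h_j$ is the smallest valid crossing over the finitely many intervals and both signs. For the largest breakpoint interval (all neighbours active), the interval is unbounded, but the root is still explicit. Taking the minimum over $j$ yields $h^\star$ as the minimum of finitely many Cardano roots.
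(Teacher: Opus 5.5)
Your proposal is correct and follows the same route as the paper. Because every $d_j$ vanishes as $h\to0^+$, the region below the smallest per-atom crossing is feasible, and Assumption~\ref{assumption:connected} then identifies $h^\star$ with that smallest crossing; your Step~3 is more careful than the paper's one-line proof, which silently treats the first hit of $\eps$ as an exit from $\mathcal F$, whereas you make explicit that a tangential touch must be ruled out by reading the assumption as in the paragraph that follows it (the first hit from the left is the right endpoint of $\mathcal F$).
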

\begin{proof}
All $d_j(0^+)=0<\eps$, so for $h$ below the smallest crossing no constraint is
violated and the region is feasible; at the smallest crossing the first
constraint becomes active. Hence $h^\star$ equals that smallest crossing, which
is $\min_j$ of the per-atom first hits.
\end{proof}
The aggregation in Theorem~\ref{thm:epa-agg} relies on the connected feasible set in Assumption~\ref{assumption:connected}.
Nonetheless, in case of the Epanechnikov kernel, one can even recover the global optimum of Problem~\ref{prob:maxbw:equiv} when the feasible region is a union of disjoint intervals. 
\begin{remark}[Aggregation of disconnected feasible sets
]\label{remark:connected}
The proof of Theorem~\ref{thm:epa-agg} identifies $h^\star$ with the \emph{first} bandwidth at which any
constraint activates. This equals the global $\sup\mathcal{F}$ precisely when
$\mathcal{F}$ is the single interval $(0,h_{\mathrm{first}}]$. However, in case of the Epanechnikov kernel the global optimum can be obtained even if this assumption is violated.
Solving each piecewise cubic equation in \eqref{eq:epa-cubic} gives at most $2\times3\times M\times (M-1)$ roots that partition the positive real half line into intervals whose endpoints bind the feasibility condition for an atom. Since, as established in Proposition~\ref{prop:nonempty}, the feasibility region is bounded and the mapping $h\mapsto \max_{1\leq j\leq M}|d_j(h)|$ is continuous, it then suffices to evaluate the feasibility of these sub-intervals at the midpoint. The global optimum coincides with the right endpoint of the right-most feasible sub-interval.
\end{remark}
\section{Simulated data}
\par We now compare the three main approaches for retrieving predictive densities on a large simulated dataset: direct finite differencing, optimal kernel smoothing of the noisy CDF, and quantile matching with $K=100$ evenly spaced quantile levels. The dataset simulates house transaction prices with realistic features, resembling the data used in the Real Estate Valuation department of Ortec Finance. The simulation model is described below and is the same as in \cite{francke}. Throughout this section, the target random variable $Y$ represents transaction prices \textit{after} a $\log$ transformation.
\subsection*{The Hierarchical Trend Model}

The transaction prices are simulated using the Hierarchical Trend Model (HTM) \citep{htm-1, htm-2}:
\begin{align}\label{eq:htm}
\begin{split}
    \mathbf{y}_t &\sim \mathcal{N}\!\left(
\ind_{n_t}\mu_t
+ D_{\boldsymbol{\lambda},t}\boldsymbol{\lambda}_t
+ D_{\boldsymbol{\theta},t}\boldsymbol{\theta}_t
+ X_t\beta
+ D_{\boldsymbol{\eta},t}\boldsymbol{\eta}
+ \varepsilon_t,\;
\sigma_\varepsilon^2 I_{n_t}
\right), \\[6pt]
\Delta \mu_t &\sim \mathcal{N}
\left(\rho \Delta \mu_{t-1} + \alpha(1-\rho),\;
\sigma_\mu^2 \right),
\quad
\Delta \mu_1 \sim \mathcal{N}
\left(\alpha,\;
\frac{\sigma_\mu^2}{1-\rho^2}\right),
\quad
\mu_1 = 0, \\[6pt]
\boldsymbol{\lambda}_{t+1} &\sim \mathcal{N}
\left(\boldsymbol{\lambda}_t,\; \sigma_{\boldsymbol{\lambda}}^2 I_{n_t}\right),  \\[6pt]
\boldsymbol{\theta}_{t+1} &\sim \mathcal{N}
\left(\boldsymbol{\theta}_t,\; \sigma_{\boldsymbol{\theta}}^2 I_{n_t}\right), \\[6pt]
\boldsymbol{\eta} &\sim \mathcal{N}\left(0,\sigma_{\boldsymbol{\eta}}^2\right). 
\end{split}
\end{align}

Here, $\mathbf{y}_t$ is an $n_t \times 1$ vector of $\log$ prices, and $t$ indicates time in quarters of a year. The variable $\mu_t$ represents the log of a common price index, whose return $\Delta \mu_t$ follows an AR(1) model with lag coefficient $\rho$ and unconditional mean $\alpha$. The vectors $\boldsymbol{\lambda}_t$ and $\boldsymbol{\theta}_t$ represent log indexes, specified as random walks, for different regions and house types in deviation from $\mu_t$. Property characteristics (floor area, lot area, year of construction) are stored in $X$ with coefficients $\beta$. The matrices $D_{\boldsymbol{\lambda},t}$, $D_{\boldsymbol{\theta},t}$ and $D_{\boldsymbol{\eta},t}$ are selection matrices and $\boldsymbol{\eta}$ contains neighborhood random effects.

\par As in \cite{francke}, the posterior predictive distribution for each transaction price is approximated using 4000 samples, and a random one is used as the simulated transaction price $y_{n_t+1}$. The conditional distribution is normal given all features, so a true underlying distribution is known and serves as a benchmark. The dataset is split 65/25/10: 715{,}111 training points, 275{,}043 calibration points, and 110{,}018 test points. A LightGBM model \cite{LightGBM} is used for point and quantile prediction.

\subsection*{Scoring rules for forecast evaluation} Besides calibration, we assess the sharpness and overall quality of predictive distributions using proper scoring rules \cite{proper-scoring-rules}. Specifically, we consider the mean integrated square error (MISE), Continuous Ranked Probability Score (CRPS), the Quadratic Score (QS), and the Dawid--Sebastiani Score (DSS). The QS evaluates predictive densities by rewarding probability mass assigned to the realized outcome while penalizing excessive concentration. The CRPS compares the predictive distribution to the realized value and jointly measures calibration and sharpness. Since conformal predictive distributions are fuzzy, CRPS is computed using the crisp modification in~\eqref{crisp}; see \cite[Section~5]{split-cpd}. Finally, the DSS evaluates predictive distributions using only their first two moments, combining a standardized squared prediction error with a penalty that rewards sharper forecasts \cite{dawid-sebastiani}. For all three scoring rules, smaller values indicate better predictive performance.

In addition to the scoring rules summarized above, we compare the performance of the different approaches by considering both tails of the predictive distribution. Specifically, we compute the conditional means below the predicted 0.05 quantile and above the predicted 0.95 quantile.
Since the true underlying conditional distribution from the HTM model in \eqref{eq:htm} is normal, the corresponding true tail means can be computed analytically.

\begin{figure}
    \centering
    \includegraphics[width=0.7\linewidth]{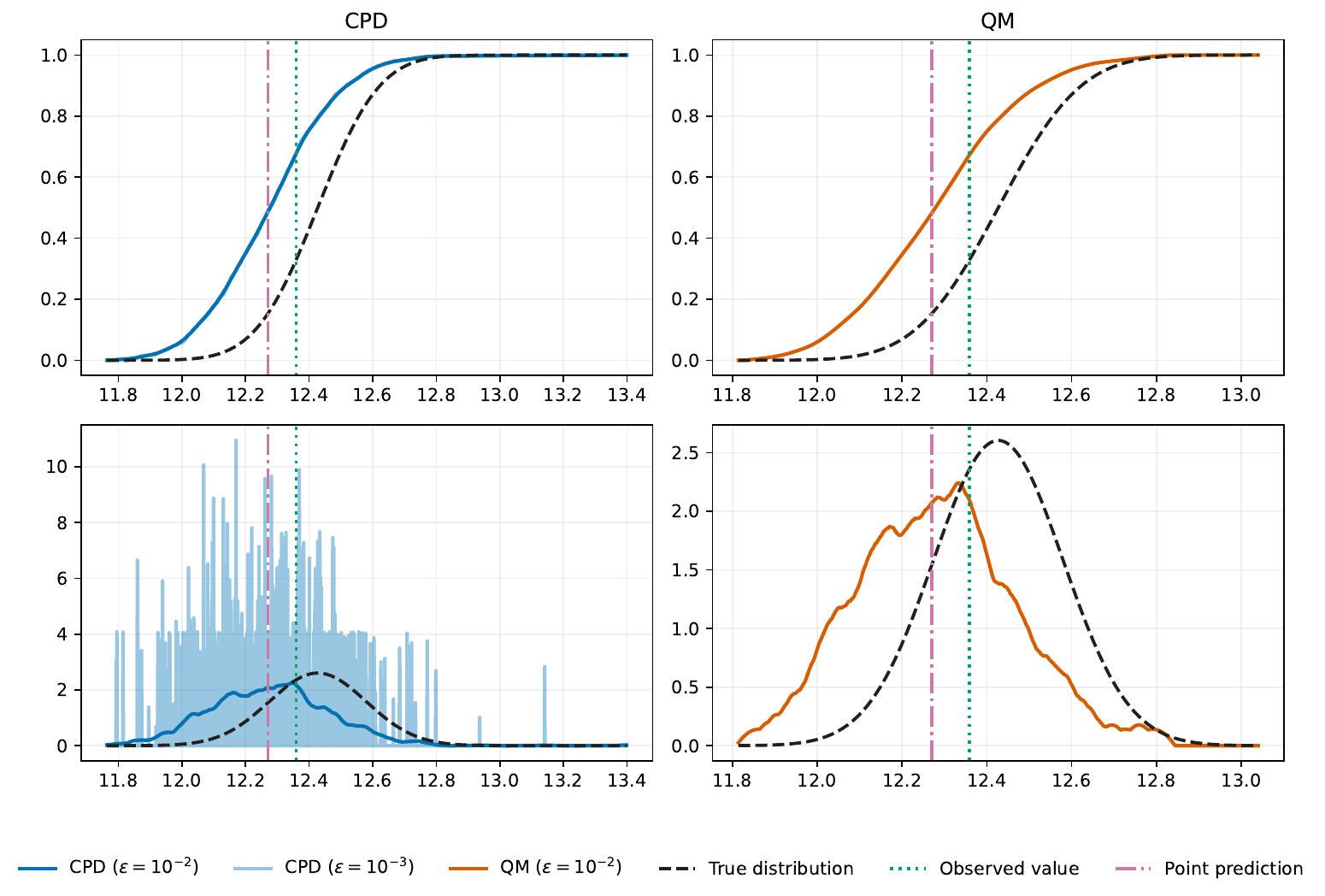}
    \caption{Predictive CDFs and densities for CPD and QM for a random transaction price. CPD is shown at $\epsilon = 10^{-3}$ and $\epsilon = 10^{-2}$
  to illustrate the effect of the smoothing tolerance. Here $|\mathcal{I}_2|=1000$.}
    \label{fig:SCP_one}
\end{figure}

\begin{figure}
    \centering
    \includegraphics[width=0.7\linewidth]{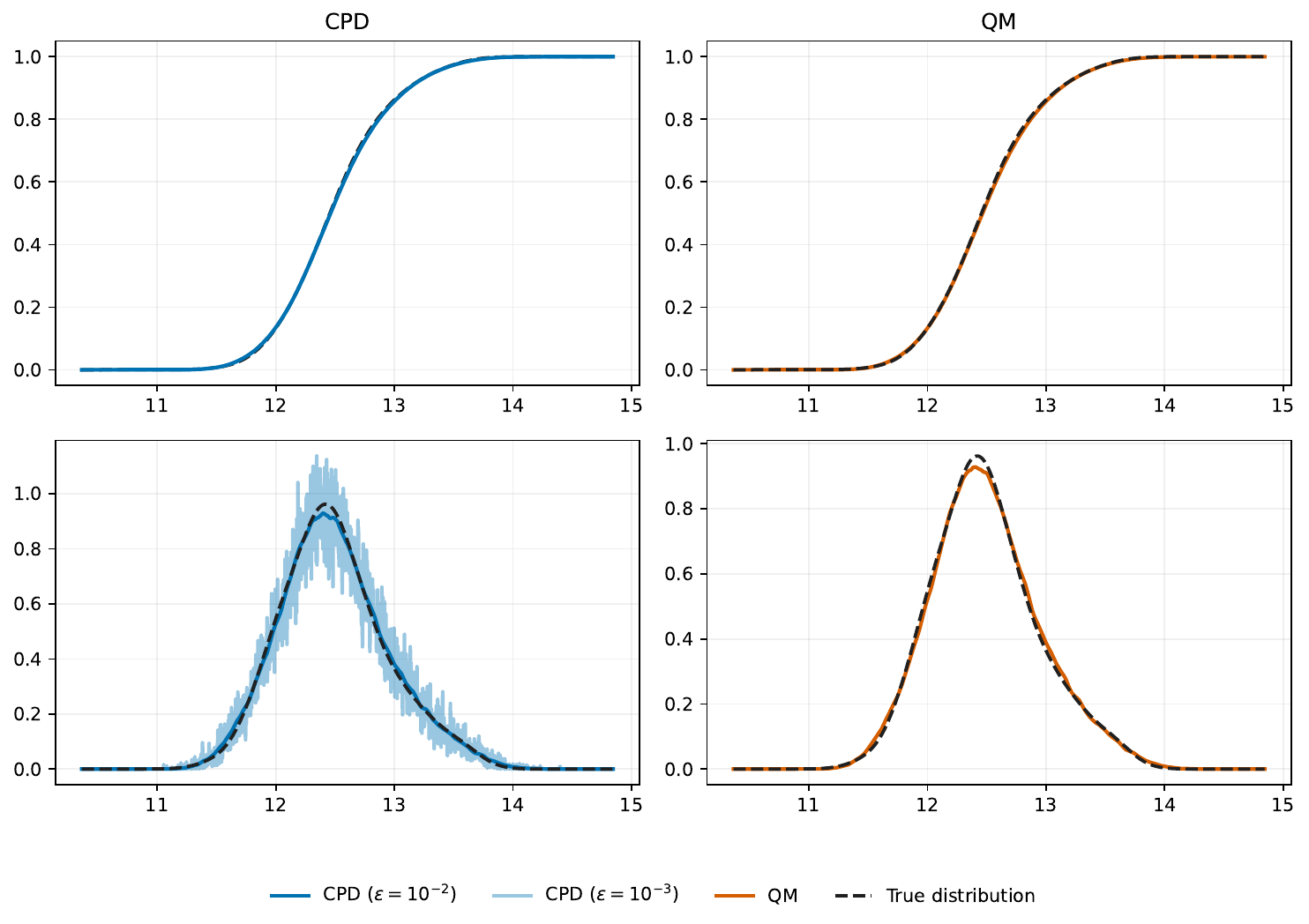}
    \caption{Marginal predictive CDFs and densities for CPD and QM over 200 samples. CPD is shown at $\epsilon = 10^{-3}$ and $\epsilon = 10^{-2}$
  to illustrate the effect of the smoothing tolerance. Here $|\mathcal{I}_2|=1000$.}
    \label{fig:SCP_marginal}
\end{figure}


\subsection*{Figures}

Figure \ref{fig:SCP_one} shows the conformal predictive distribution for a randomly selected transaction price, together with the true conditional distribution and the corresponding predictive density obtained by applying the Epanechnikov smoothing procedure described in Section~\ref{sec:optimal_filtering}. The calibration-set size is fixed at 1,000 because the bandwidth solver described in Section~\ref{sec:optimal_filtering} scales quadratically with the number of conformal atoms. Consequently, computing smoothed CPD densities becomes intractable for larger calibration sets. In such cases, numerical differentiation provides a more computationally feasible alternative; see, for example, Table \ref{tab:scoring_results}, where finite differencing is used. Similarly, Figure \ref{fig:SCP_marginal} presents the predictive CDFs and PDFs marginalized over 200 test observations. The Monte Carlo averages become close to the true marginal distribution, illustrating the ``on-average'' nature of the marginal validity guarantee. This empirical agreement should not, however, be interpreted as convergence of each estimated conditional distribution to its true counterpart. The figure also shows that the bandwidth selected for Epanechnikov smoothing is not always well suited to keeping the perturbation threshold $\epsilon$ on the same scale as the theoretical upper bound on the CPD error, which is inversely proportional to the calibration-set size; see Theorem \ref{distance-tailcorrected}.
\begin{figure}
    \centering
    \includegraphics[width=0.7\linewidth]{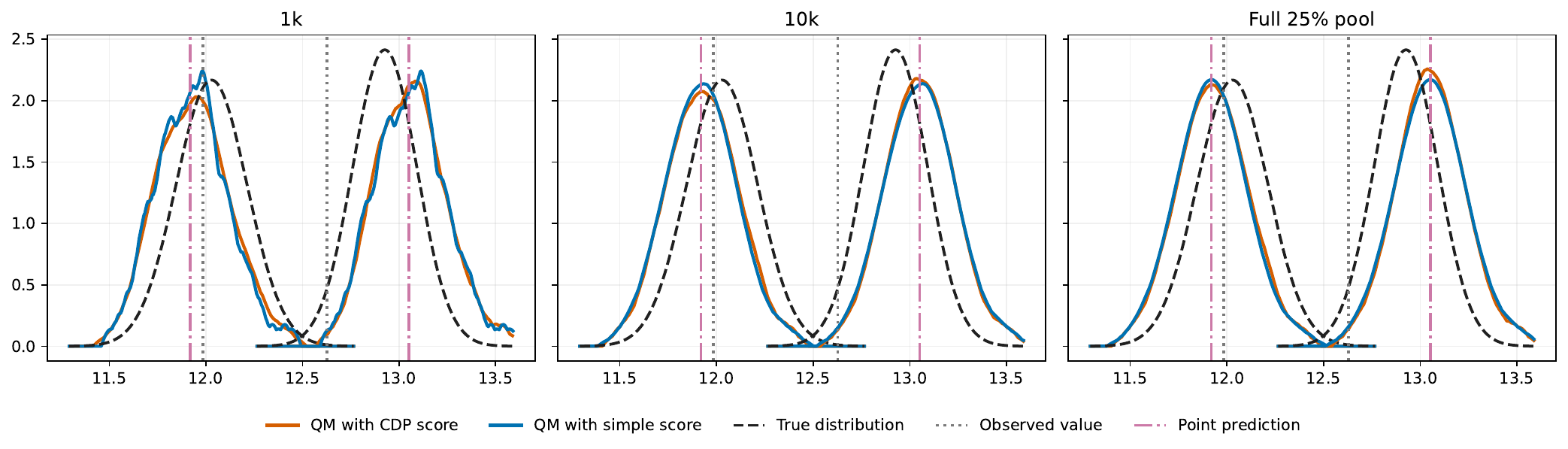}
    \caption{Predictive densities for two test observations using QM with the simple and the CDP conformity scores, compared with the true conditional distributions.}
    \label{fig:QM_CDP_marginal}
\end{figure}
\begin{figure}
    \centering
    \includegraphics[width=0.7\linewidth]{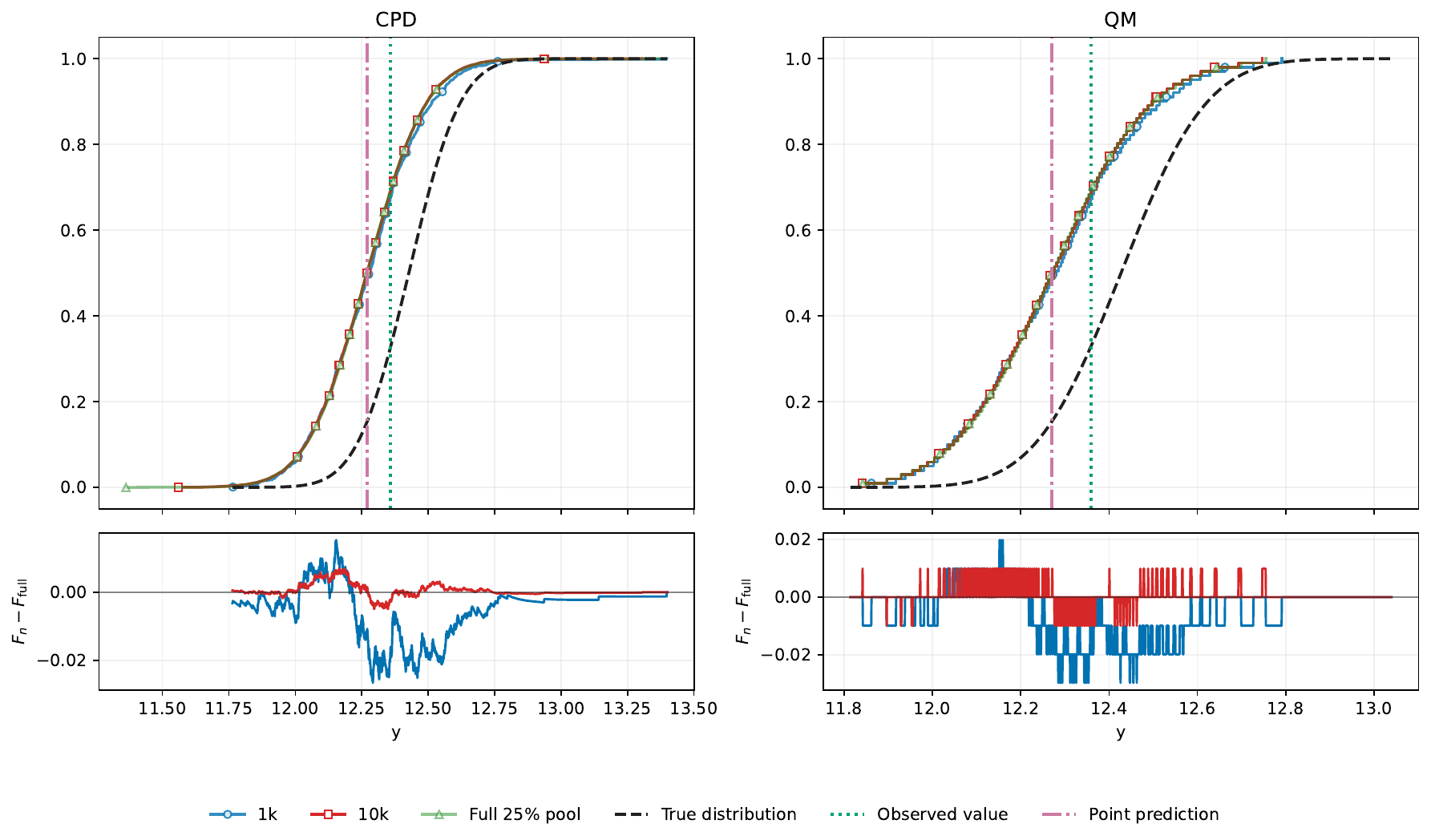}
    \caption{Step CDFs for CPD and QM across three calibration-set sizes. The lower plots show deviations from the CDF obtained with the 25\% calibration split, making overlap and convergence more visible.}
    \label{fig:cdf_calibration_comparison}
\end{figure}
\begin{figure}
    \centering
    \includegraphics[width=0.8\linewidth]{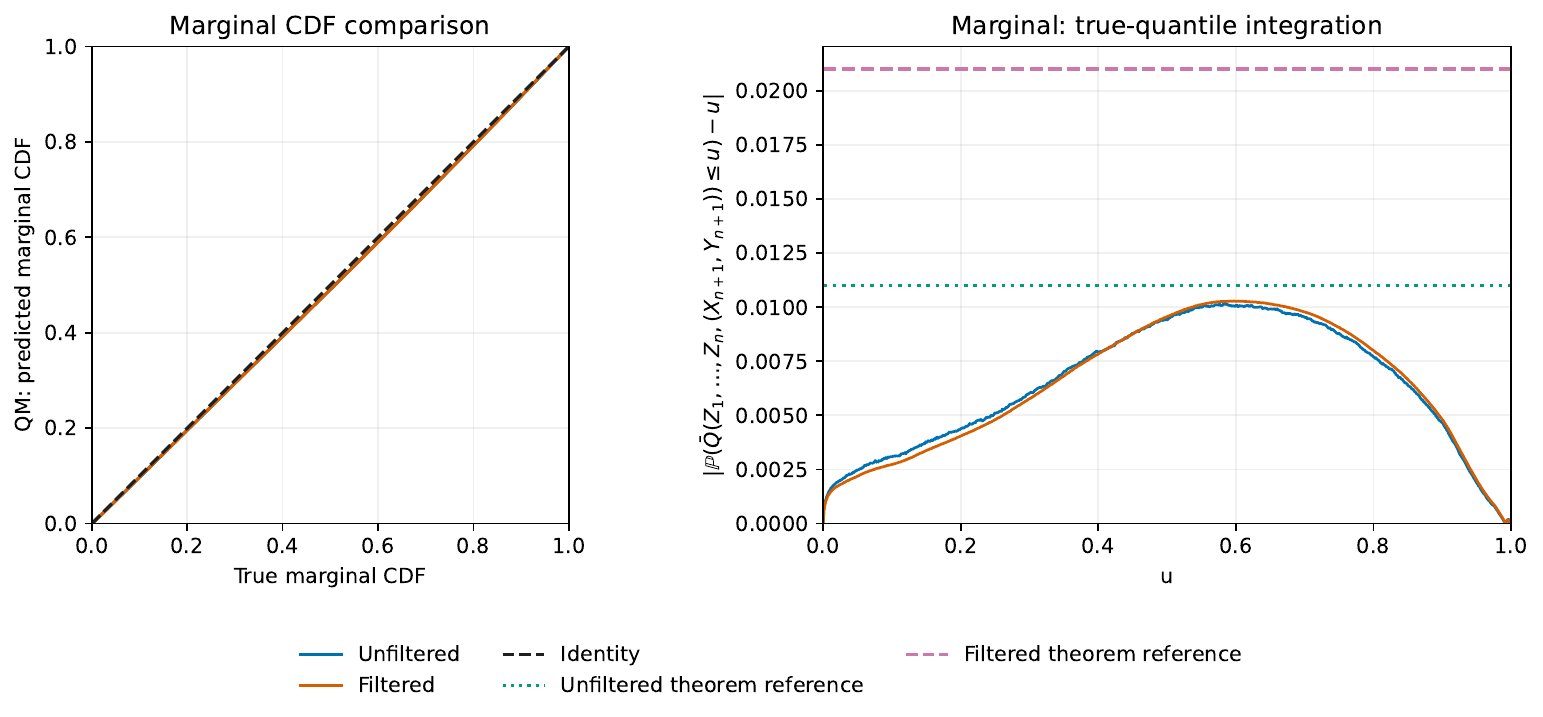}
    \caption{Diagnostics checks: On the left QQ-plot comparing the quantiles of the predicted marginal QM distribution and the true marginal distribution approximated on $10,000$ test samples. On the right an empirical validation of Theorem \ref{qm-theorem}.}
    \label{fig: probably correct}
\end{figure}
\par Figure~\ref{fig:QM_CDP_marginal} displays the results obtained when quantile matching is combined with the CDP conformity score for two representative observations. As the number of calibration observations increases, the estimated predictive distributions become progressively closer to the true conditional distributions, providing an empirical illustration consistent with Theorem~\ref{qm-theorem}. As discussed in Remark~\ref{remark:cdp}, no monotonicity violations were encountered in our experiments, allowing the smoothing procedure of Section~\ref{sec:optimal_filtering} to be applied to CDP-based quantile-matched distributions as well. A notable difference from the standard conformity score is that CDP produces genuinely heteroscedastic predictive distributions, whose shape and spread vary across observations. In contrast, predictive distributions obtained with the standard conformity score are essentially translated versions of the same residual distribution, differing primarily through the point prediction.
Figure \ref{fig:cdf_calibration_comparison} illustrates the effect of the calibration-set size on the predictive distribution for a single test observation. For the CPD method, increasing the calibration-set size reduces the jump sizes of the empirical predictive CDF and therefore produces a visibly smoother distribution.
Finally, Figure \ref{fig: probably correct} provides us diagnostics for the QM predictions, displaying that the predicted marginal CDF is aligned with the True marginal CDF, and that the threshold in Theorem \ref{qm-theorem} and its filtered counterpart, are satisfied.
\subsection*{Numerical results}
Table~\ref{tab:scoring_results} compares CPD and quantile matching (QM), using either finite differences or analytically optimized Epanechnikov smoothing, across the three calibration-set sizes. Epanechnikov smoothing is unavailable for CPD at the two larger sizes because its bandwidth optimization scales quadratically with the number of conformal atoms. The CRPS is essentially constant across all configurations, ranging only from $0.10737$ to $0.10849$, and is therefore uninformative for distinguishing the methods.
\par QM performs particularly well at the smallest calibration size, reducing the MISE from $0.4741$ for finite-difference CPD to approximately $0.18$. Unlike CPD, QM also remains compatible with analytically optimized Epanechnikov smoothing at every calibration size because it uses only $K=100$ atoms. This combination produces consistently strong QS and DSS values, while maintaining competitive MISE.

\par
QM also gives the smallest left- and right-tail mean errors across all calibration sizes. Finite-difference QM is especially efficient, requiring only $0.001$--$0.002$ seconds, and attains the lowest MISE at the full calibration size. Overall, QM provides accurate tail estimates, competitive density estimates, and a computationally tractable route to optimal Epanechnikov smoothing even for very large calibration sets.

\begin{table}[H]
\centering
\caption{Average scoring metrics over 200 test observations for three calibration set sizes: $|\mathcal{I}_2| = 1{,}000$, $|\mathcal{I}_2| = 10{,}000$, and $|\mathcal{I}_2| = 275{,}043$ (full 25\% split), using the SCP score. Quantile matching is applied using 100 evenly spaced quantile levels. Tail means are evaluated at the 0.05- and 0.95-quantile levels. Epanechnikov results for CPD at the two larger calibration sizes are unavailable because the bandwidth solver is computationally intractable for the corresponding number of atoms.}\label{tab:scoring_results}
\setlength{\tabcolsep}{5pt}
\resizebox{\textwidth}{!}{%
\begin{tabular}{llcccccc}
\toprule
& & \multicolumn{6}{c}{\textbf{Calibration size} $|\mathcal{I}_2| = 1{,}000$} \\
\cmidrule(lr){3-8}
\textbf{Method} & \textbf{Density} & \textbf{MISE} & \textbf{QS}
& \textbf{DSS} & \textbf{Left tail MAE} & \textbf{Right tail MAE}
& \textbf{Time (s)} \\
\midrule
CPD & Epanechnikov (opt.\ bw)
& $3.4397$ & $1.697$ & $-0.791$ & $0.0878$ & $0.1068$ & $2.235$ \\
CPD & Finite differences
& $0.4741$ & $-1.260$ & $-2.284$ & $0.0666$ & $0.0985$ & $0.004$ \\
\addlinespace
QM & Epanechnikov (opt.\ bw)
& $0.1793$ & $-1.512$ & $-2.323$ & $0.0608$ & $0.0791$ & $0.063$ \\
QM & Finite differences
& $0.1775$ & $-1.516$ & $-0.580$ & $0.0602$ & $0.0737$ & $0.002$ \\
\midrule
& & \multicolumn{6}{c}{\textbf{Calibration size} $|\mathcal{I}_2| = 10{,}000$} \\
\cmidrule(lr){3-8}
\textbf{Method} & \textbf{Density} & \textbf{MISE} & \textbf{QS}
& \textbf{DSS} & \textbf{Left tail MAE} & \textbf{Right tail MAE}
& \textbf{Time (s)} \\
\midrule
CPD & Epanechnikov (opt.\ bw)
& \multicolumn{6}{c}{N/A} \\
CPD & Finite differences
& $0.2227$ & $-1.433$ & $-2.272$ & $0.0760$ & $0.0703$ & $0.017$ \\
\addlinespace
QM & Epanechnikov (opt.\ bw)
& $0.1704$ & $-1.521$ & $-2.333$ & $0.0621$ & $0.0671$ & $0.066$ \\
QM & Finite differences
& $0.1691$ & $-1.524$ & $-2.076$ & $0.0618$ & $0.0647$ & $0.001$ \\
\midrule
& & \multicolumn{6}{c}{\textbf{Calibration size} $|\mathcal{I}_2| = 275{,}043$ (25\% split)} \\
\cmidrule(lr){3-8}
\textbf{Method} & \textbf{Density} & \textbf{MISE} & \textbf{QS}
& \textbf{DSS} & \textbf{Left tail MAE} & \textbf{Right tail MAE}
& \textbf{Time (s)} \\
\midrule
CPD & Epanechnikov (opt.\ bw)
& \multicolumn{6}{c}{N/A} \\
CPD & Finite differences
& $0.1686$ & $-1.533$ & $-2.271$ & $0.0760$ & $0.0703$ & $0.181$ \\
\addlinespace
QM & Epanechnikov (opt.\ bw)
& $0.1695$ & $-1.524$ & $-2.341$ & $0.0622$ & $0.0690$ & $0.318$ \\
QM & Finite differences
& $0.1673$ & $-1.527$ & $-1.836$ & $0.0614$ & $0.0652$ & $0.001$ \\
\bottomrule
\end{tabular}}
\end{table}

\subsection*{Acknowledgements}
This work was supported by Ortec Finance, which employed Dan Andrei Tudor during the master's thesis research that forms the basis of this study.
The authors would like to thank Afrasiab Kadhum and Marc Francke for providing the simulated dataset.

\appendix
\section{Proofs for the optimal kernel smoothing of conformal predictive distribution}
In what follows we give full details on the proofs relating the optimal bandwidth for kernel smoothing under fidelity constraints in Section~\ref{sec:optimal_filtering}.
\begin{proof}[Proof of Proposition~\ref{prop:closedform}]\label{proof:closedform-smoothing}
	Write the step CDF as the finite sum
	in \eqref{eq:heaviside_cdf}. The convolution $Q_n*K_h$ is
	well defined pointwise: $Q_n$ is bounded and $K_h\in L^1$, so
	$\int_{\R} |Q_n(y-u)|\,K_h(u)\,du \le \bigl(\sup|Q_n|\bigr)\int_{\R} K_h < \infty$.
	By linearity of convolution over the finite sum,
	\begin{equation*}
		Q_n^h = Q_n * K_h = \sum_{i=1}^M \Delta_i\,\bigl(H(\cdot-\Catom{i})*K_h\bigr).
	\end{equation*}
	We compute a single term directly. By the definition of convolution and of $H$,
	\begin{equation*}
		\bigl(H(\cdot-\Catom{i})*K_h\bigr)(y)
		= \int_{\R} H(z-\Catom{i})\,K_h(y-z)\,dz
		= \int_{\Catom{i}}^{\infty} K_h(y-z)\,dz .
	\end{equation*}
	Substitute $u = y-z$ (so $z=\Catom{i}\Rightarrow u=y-\Catom{i}$ and
	$z\to+\infty\Rightarrow u\to-\infty$, with $dz=-du$), then rescale $v=u/h$:
	\begin{align*}
		\bigl(H(\cdot-\Catom{i})*K_h\bigr)(y)
		\begin{aligned}[t]
		    &= \int_{-\infty}^{\,y-\Catom{i}} K_h(u)\,du
		= \int_{-\infty}^{\,y-\Catom{i}} \tfrac1h K\!\bigl(\tfrac{u}{h}\bigr)\,du\\
		&= \int_{-\infty}^{\,(y-\Catom{i})/h} K(v)\,dv
		= \mathbb{K}\!\left(\frac{y-\Catom{i}}{h}\right).
		\end{aligned}
	\end{align*}
	Summing over $i$ gives the stated CDF. Differentiating the finite sum term by
	term and using $\mathbb{K}'=K$ yields
	$\hat f^h(y) = \sum_i \Delta_i\,\tfrac1h K\bigl((y-\Catom{i})/h\bigr)$.
	
	For the stated properties: each $\mathbb{K}$ is non-decreasing (integral of
	$K\ge0$) and the weights $\Delta_i\ge0$, so $Q_n^h$ is non-decreasing;
	equivalently $\hat f^h\ge0$. Each $\mathbb{K}\in[0,1]$, so
	$Q_n^h\in[0,\sum_i\Delta_i]$, with $Q_n^h(y)\to0$ as $y\to-\infty$ and
	$Q_n^h(y)\to\sum_i\Delta_i$ as $y\to+\infty$. Finally, for each $i$,
	$\int_{\R}\tfrac1h K\bigl((y-\Catom{i})/h\bigr)\,dy = \int_{\R}K(v)\,dv = 1$, so
	$\int_{\R}\hat f^h = \sum_i\Delta_i$. For a proper CDF ($\sum_i\Delta_i=1$) the
	bounds specialise to $[0,1]$ and $\int_{\R}\hat f^h=1$.
\end{proof}

\begin{proof}[Proof of Proposition~\ref{prop:nonempty}]\label{proof:nonempty}
Throughout we take $Q_n$ to be a proper CDF, $\sum_i\Delta_i=1$ as produced by the tail corrections in Theorem~\ref{distance-tailcorrected}; the general case only rescales the bounds
below by $\sum_i\Delta_i$. Write $g(h) := \max_{1\le j\le M}|d_j(h)|$, so that
$\mathcal{F} = \{h>0 : g(h)\le\eps\}$ and $h^\star=\sup\mathcal{F}$. We argue in
four steps.

\smallskip
\noindent\emph{Step 1: $d_j(h)\to0$ as $h\to0^+$.} By \eqref{eq:dev}, $d_j$ is a
finite signed sum of terms $\Delta_i\,\Kbar(\delta_{ij}/h)$ with
$\delta_{ij}:=|\Catom{i}-\Catom{j}|>0$ fixed (atoms are distinct). As
$h\to0^+$, $\delta_{ij}/h\to+\infty$, and $\Kbar(x)=\int_x^\infty K\to0$ since
$K$ is integrable. Finitely many bounded terms each tend to $0$, so
$d_j(h)\to0$.

\smallskip
\noindent\emph{Step 2: a single uniform $h_0>0$ with $(0,h_0)\subseteq
\mathcal{F}$.} Fix $j$. By Step~1 there is $h_0^{(j)}>0$ with $|d_j(h)|\le\eps$
for all $h\in(0,h_0^{(j)})$. Set $h_0 := \min_{1\le j\le M} h_0^{(j)}$. This is a
minimum of \emph{finitely many} strictly positive numbers, hence $h_0>0$. For
any $h\in(0,h_0)$ and every $j$ we have $h<h_0\le h_0^{(j)}$, so
$|d_j(h)|\le\eps$; thus $g(h)\le\eps$ and $h\in\mathcal{F}$. Therefore
$(0,h_0)\subseteq\mathcal{F}$, so $\mathcal{F}\neq\emptyset$ and
$h^\star\ge h_0>0$. Finiteness of the atom set is essential: an infinite family
of thresholds $h_0^{(j)}$ could have infimum $0$, in which case no common
neighbourhood of the origin would be feasible.

\smallskip
\noindent\emph{Step 3: $\mathcal{F}$ is bounded above.} As $h\to\infty$,
$(\Catom{i}-\Catom{j})/h\to0$, so $\mathbb{K}\to\mathbb{K}(0)=\tfrac12$ and
$Q_n^h(\Catom{j})\to\tfrac12\sum_i\Delta_i=\tfrac12$; hence
$d_j(h)\to\tfrac12-t_j$. For the leftmost atom, $t_1=\tfrac12\Delta_1$, so
$d_1(h)\to\tfrac12-\tfrac12\Delta_1$, which exceeds $\eps$ for any reasonable
tolerance, certainly for $\eps<\tfrac12(1-\Delta_1)$. Thus $g(h)>\eps$ for all
large $h$ and $\mathcal{F}$ is bounded above. It suffices that \emph{some} $t_j$
lie outside $[\tfrac12-\eps,\tfrac12+\eps]$, which the extreme atoms guarantee.

\smallskip
\noindent\emph{Step 4: attainment.} Each $d_j$ is a continuous mapping of $h$ on $(0,\infty)$, as it is a
composition of the continuous $\mathbb{K}$ with fixed arguments
$(\Catom{j}-\Catom{i})/h$. Consequently, $g=\max_j|d_j|$ is continuous and
$\mathcal{F}=g^{-1}([0,\eps])$ is relatively closed in $(0,\infty)$. Since
$0<h_0\le h^\star<\infty$, pick $h_k\in\mathcal{F}$ with $h_k\to h^\star$;
continuity gives $g(h^\star)=\lim_k g(h_k)\le\eps$, so $h^\star\in\mathcal{F}$
and the supremum is attained.
\end{proof}
Notice that the proof constructs a conservative, not necessarily optimal, bandwidth explicitly, irrespective of the kernel used. Indeed, let $\ell_{\min}:=\min_k(\Catom{k+1}-\Catom{k})$ be the smallest inter-atom gap; every atom's nearest neighbour is adjacent, so $\delta_{ij}\ge \ell_{\min}$ for all $i\ne j$. Using the triangle inequality in \eqref{eq:dev}, monotonicity of $\Kbar$, and $\sum_i\Delta_i=1$,
	\begin{equation}
		\label{eq:h0-explicit}
		|d_j(h)| \;\le\; \sum_{i\ne j}\Delta_i\,\Kbar\!\Bigl(\tfrac{\delta_{ij}}{h}\Bigr)
		\;\le\; \Kbar\!\Bigl(\tfrac{\ell_{\min}}{h}\Bigr)\sum_{i\ne j}\Delta_i
		\;\le\; \Kbar\!\Bigl(\tfrac{\ell_{\min}}{h}\Bigr),
	\end{equation}
	uniformly in $j$. Hence $\Kbar(\ell_{\min}/h)\le\eps$ suffices for feasibility, i.e.\
	$h\le \ell_{\min}/\Kbar^{-1}(\eps)=:h_{c}$, well defined for any $\eps\in(0,\tfrac12)$
	since $\Kbar:[0,\infty)\to(0,\tfrac12]$ is continuous and strictly decreasing.

\begin{proof}[Proof of Proposition~\ref{prop:active}]\label{proof:active}
Write $g(h)=\max_j|d_j(h)|$. By Proposition~\ref{prop:nonempty}, $g$ is
continuous, $h^\star=\sup\mathcal{F}$ is finite, and $h^\star\in\mathcal{F}$
(attainment) with $h^\star\ge h_0>0$, so $h^\star$ lies in the interior of the
domain $(0,\infty)$. Being in $\mathcal{F}$ gives $g(h^\star)\le\eps$. Suppose the
inequality were strict, $g(h^\star)<\eps$. By continuity the set
$g^{-1}\bigl((-\infty,\eps)\bigr)$ is open and contains $h^\star$, hence contains
an interval $(h^\star-\eta,\,h^\star+\eta)\subset(0,\infty)$ for some $\eta>0$.
Every point of that interval lies in $\mathcal{F}$; in particular some
$h'\in(h^\star,h^\star+\eta)$ is feasible and strictly exceeds
$\sup\mathcal{F}$, which is a contradiction. Therefore $g(h^\star)=\eps$.
\end{proof}


\vskip 0.2in
\bibliography{references.bib}
\bibliographystyle{plainnat}  
\end{document}